\newtheorem{thm}{Theorem}[section]
\newtheorem{thmx}{Theorem}
\newtheorem{lem}[thm]{Lemma}
\newtheorem{prop}[thm]{Proposition}
\newtheorem{cor}[thm]{Corollary}
\newtheorem{rem}[thm]{Remark}
\newtheorem{defn}{Definition}[section]
\newenvironment{proof}{\noindent\emph{Proof.}}{\hfill$\square$\medskip}
\newcommand{\D}{\Delta}
\newcommand{\vp}{\varphi}
\newcommand{\R}{\mathbb{R}}
\newcommand{\N}{\mathbb{N}}
\newcommand{\ve}{\varepsilon}
\newcommand{\pa}{\partial}
\newcommand{\Oo}{\mathcal{O}}
\newcommand{\HH}{\mathcal{H} }
\def\CC{\leavevmode\setbox0=\hbox{h}\dimen=\ht0 \advance \dimen by-1ex\rlap{\raise.9\dimen\hbox{\kern .15em \char'27}} C}
\newcommand{\mres}{\mathbin{\vrule height 1.6ex depth 0pt width
0.13ex\vrule height 0.13ex depth 0pt width 1.3ex}}
\title{Blow-up Analysis of Stationary Solutions to a  Liouville-Type Equation in  $3$-D} 
\author{Francesca Da Lio\thanks{Department of Mathematics, ETH Z\"urich, R\"amistrasse 101, 8092 Z\"urich, Switzerland.} and Ali Hyder\thanks{TIFR Centre for Applicable Mathematics, Sharadanagar, Yelahanka New Town, Bangalore 560065, India.}}
\begin{document}

\maketitle
\begin{abstract}
In this paper we study the asymptotic behavior of sequences of stationary  weak solutions  to  the following Liouville-type  equation
\begin{align}\label{eq-abs} -\D u=e^u\quad\text{in }\Omega,\end{align}
where $\Omega\subseteq\R^3$ is an open set.  By improving the partial regularity estimates obtained by the first author in \cite{DaLio} for the equation \eqref{eq-abs} we succeed in   performing a blow-up analysis without  Morrey-type assumptions on the solutions $u$ and on the nonlinearity $e^u.$  \par

  \end{abstract}
\medskip
 {\small {\bf Key words.} Liouville-type equations,  blow-up analysis, partial regularity.}
 
 \medskip
{\noindent{\small {\bf  MSC 2020.} 35J61, 35B44, 35B65, 35D30. }

 \tableofcontents 

\section{Introduction and main results}  
 This paper is concerned with compactness results for sequences of weak solutions to the following Liouville-type  equation
\begin{align}\label{eq-1} -\D u=e^u\quad\text{in }\Omega,\end{align}
where $\Omega\subseteq\R^3$ is an open set.\par

  The equation  \eqref{eq-1} is the Euler-Lagrange of the following energy functional
\begin{equation}\label{energy}
E(u)=\frac{1}{2}\int_{\Omega}|\nabla u|^2 dx-\int_{\Omega} e^u dx.\end{equation}
 A weak solution of
 \eqref{eq-1} is a function $u$  which is a solution of  \eqref{eq-1}  in the sense of distributions, belongs to $H^1(\Omega)$ and such that $e^{u}\in L^1(\Omega)$
 
 We recall that in dimension $m=2$ the equation \eqref{eq-1} has a geometrical meaning. More precisely  if $(\Sigma, g_0)$  is a smooth, closed Riemann surface with  Gaussian curvature $K_{g_0}$, an easy computation shows that a function $K(x)$  is the Gaussian curvature for some {metric $g=e^{2u}g_0$ conformally equivalent} to the  metric $g_0$  with $u \colon\Sigma \to\R, $ if and only if there exists a solution $u = u(x)$ of
\begin{equation}\label{ld2}-\Delta_{g_0} u=K e^{2u}-K_{{g_0}}~~\text{on } \Sigma,\end{equation}
where $\Delta_{g_0}$ is the Laplace Beltrami operator on $(\Sigma, g_0)$\,, (see e.g. \cite{ch} for more details).
In particular, when $\Sigma =\R^2$ or $\Sigma=S^2$ equation \eqref{ld2} reads respectively
\begin{equation}\label{ld2r} -\Delta u=K e^{2u} ~~\text{on }\R^2,\end{equation} and
\begin{equation*}-\Delta_{S^2} u=K e^{2u}-1~~\text{on } S^2.\end{equation*}\par
 In $2$ dimension the equation \eqref{eq-1} is critical   and it was proved by Br\'ezis and Merle in \cite{BM} that any weak solution
 with $e^{u}\in L^1(\Omega)$ is smooth. In  \cite{BM} the authors also showed the following remarkable 
   blow-up result:

\begin{thm}[Theorem 3, \cite{BM}]\label{thbm}
Let $ \Omega$ be an open subset of $\R^2$ and $(u_k)\subset L^1(\Omega)$ be a sequence of solutions to \eqref{ld2r} satisfying for some $1<p\le  \infty$, $K_k\ge 0$,
$\|K_k\|_{L^p}\le C_1\,,$ and $\|e^{u_k}\|_{L^{p'}}\le C_2\,.$ Then up to subsequences the following alternatives hold:
either $(u_k)$ is bounded in $L^{\infty}_{loc}(\Omega)$, or $u_k(x)\to -\infty$  uniformly on compact subsets of $\Omega$, or
there is a finite nonempty set $B=\{a_1,\ldots,a_N\}\subset\Omega$  (concentration set) such that
$u_k(x)\to -\infty$ on compact subsets of $\Omega\setminus B$. In addition in this last case
$K_k e^{2u_k}$ converges in the sense of measure on $\Omega$ to $\sum_{i=1}^N\alpha_i\delta_{a_i},$ with 
$\alpha_i\ge{\frac{2\pi}{{p^{\prime}}} }\,.$~~\hfill$\Box$
\end{thm}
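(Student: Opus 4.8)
The plan is to run a concentration--compactness argument anchored on the Br\'ezis--Merle exponential estimate. Write $f_k:=K_ke^{2u_k}\ge 0$ for the right-hand side of \eqref{ld2r}. Hölder's inequality pairs the bound $\|K_k\|_{L^p}\le C_1$ against the control of the exponential nonlinearity coming from $\|e^{u_k}\|_{L^{p'}}\le C_2$, giving a uniform bound on the local masses $\int_{\Omega'}f_k\,dx$ for every $\Omega'\Subset\Omega$; in particular $\int_{\Omega'} e^{u_k}\,dx$ is locally bounded. Passing to a subsequence, $f_k\,dx\rightharpoonup\mu$ weakly-$\ast$ for a nonnegative Radon measure $\mu$ of locally finite mass. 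I then declare a point $x_0\in\Omega$ \emph{regular} if $\limsup_k\int_{B_r(x_0)}f_k\,dx<\frac{2\pi}{p'}$ for some $r>0$, and let $\Sigma$ be the closed set of non-regular points; the claim is that $\Sigma$ will be the concentration set $B$.

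The heart of the matter is the $\varepsilon$-regularity statement: \emph{near every regular point $u_k^+$ is bounded in $L^\infty_{loc}$.} To prove it I fix a ball $B_{2r}(x_0)$ on which $\limsup_k\int f_k<\frac{2\pi}{p'}$ and split $u_k=w_k+h_k$ on $B_{2r}$, where $-\Delta w_k=f_k$ with $w_k=0$ on $\partial B_{2r}$ and $h_k$ is harmonic. The Br\'ezis--Merle inequality (BM, Theorem 1) gives, for $\delta\in(0,4\pi)$, a uniform bound on $\int_{B_{2r}}\exp\!\big(\tfrac{(4\pi-\delta)|w_k|}{\|f_k\|_{L^1(B_{2r})}}\big)\,dx$; since $\|f_k\|_{L^1(B_{2r})}<\frac{2\pi}{p'}$ I may choose $\delta$ small and $q>p'$ close to $p'$ with $2q\le\frac{4\pi-\delta}{\|f_k\|_{L^1}}$, so that $e^{2w_k}$ is bounded in $L^q(B_{2r})$. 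The harmonic part is controlled from above on $B_r$ by the mean value property together with the local bound on $\int e^{u_k}$, so $e^{2h_k}\in L^\infty(B_r)$ uniformly, whence $e^{2u_k}$ is bounded in $L^q(B_r)$ with $q>p'$. One more application of Hölder gives $f_k=K_ke^{2u_k}$ bounded in $L^s(B_r)$ for some $s>1$, and elliptic regularity for $-\Delta u_k=f_k$ upgrades this to a uniform bound on $u_k^+$ in $L^\infty(B_{r/2})$. The precise threshold $\frac{2\pi}{p'}$ emerges exactly from balancing the constant $4\pi$ of the exponential estimate against the factor $2$ in $e^{2u}$ and the Hölder exponent $p'$; getting this bookkeeping, and the harmonic upper bound, right is the first real obstacle.

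With $\varepsilon$-regularity in hand the set $\Sigma$ is finite: by weak-$\ast$ convergence every $x_0\in\Sigma$ satisfies $\mu(\{x_0\})=\lim_{r\to0}\mu(\overline{B_r(x_0)})\ge\frac{2\pi}{p'}$, and since $\mu$ has finite mass there can be at most finitely many such atoms. On $\Omega\setminus\Sigma$ the functions $u_k^+$ are locally bounded and $u_k$ is superharmonic ($-\Delta u_k=f_k\ge0$); writing the Riesz decomposition of $u_k$ as a nonnegative locally bounded potential plus a harmonic part, and applying the Harnack inequality to the harmonic part, which is bounded above, yields the dichotomy that, along a subsequence, either $u_k$ is bounded in $L^\infty_{loc}(\Omega\setminus\Sigma)$ or $u_k\to-\infty$ locally uniformly on $\Omega\setminus\Sigma$.

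It remains to assemble the three alternatives. In the first branch of the dichotomy, elliptic estimates promote the $L^\infty_{loc}$ bound to $C^0_{loc}(\Omega\setminus\Sigma)$ convergence $u_k\to u$ with $u$ finite and bounded above on compacts; passing to the limit in the equation gives $-\Delta u=Ke^{2u}+\sum_i\alpha_i\delta_{a_i}$ on $\Omega$ with atoms at $\Sigma$. But a positive Dirac mass $\alpha_i\delta_{a_i}$ forces a logarithmic $\to+\infty$ singularity of $u$ at $a_i$, contradicting the uniform upper bound; hence $\Sigma=\emptyset$ and $u_k$ is bounded in $L^\infty_{loc}(\Omega)$, which is the first alternative. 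In the second branch $u_k\to-\infty$ on $\Omega\setminus\Sigma$: if $\Sigma=\emptyset$ this is the second alternative, while if $\Sigma\neq\emptyset$ then $e^{2u_k}\to0$ locally uniformly on $\Omega\setminus\Sigma$, so $f_k=K_ke^{2u_k}\to0$ in $L^1_{loc}(\Omega\setminus\Sigma)$ (using $p>1$), forcing $\mu$ to be supported on $\Sigma=\{a_1,\dots,a_N\}=:B$. Thus $f_k\,dx\rightharpoonup\sum_{i=1}^N\alpha_i\delta_{a_i}$ with $\alpha_i=\mu(\{a_i\})\ge\frac{2\pi}{p'}$, which is the third alternative. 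The most delicate points, beyond the threshold bookkeeping, are the incompatibility of a bounded-above limit with a positive atom, which collapses the first branch to no blow-up, and the superharmonic Harnack dichotomy that organizes the whole trichotomy.
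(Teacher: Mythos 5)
The paper itself gives no proof of this statement---it is quoted from Br\'ezis--Merle \cite{BM} (hence the ``$\Box$'')---so your proposal can only be measured against the classical argument, which your outline in fact reproduces: H\"older mass bounds, a weak-$\ast$ limit measure, $\varepsilon$-regularity via the Br\'ezis--Merle exponential inequality applied to the zero-boundary potential part plus a sub-mean-value bound on the harmonic part, finiteness of the atom set, and the Harnack dichotomy for the superharmonic $u_k$. The threshold bookkeeping in your $\varepsilon$-regularity step ($2q\le\frac{4\pi-\delta}{\|f_k\|_{L^1}}$ with $q>p'$, possible precisely because $\|f_k\|_{L^1}<\frac{2\pi}{p'}$) is correct, \emph{provided} the hypothesis is read as a bound on $\|e^{2u_k}\|_{L^{p'}}$: this is what the stated threshold $\frac{2\pi}{p'}$ forces when one translates Br\'ezis--Merle's $e^u$-normalization via $v=2u$, whereas the literal hypothesis $\|e^{u_k}\|_{L^{p'}}\le C_2$ only bounds $e^{2u_k}$ in $L^{p'/2}$, and then already your first H\"older pairing $\int K_ke^{2u_k}\le\|K_k\|_{L^p}\|e^{2u_k}\|_{L^{p'}}$ does not close.

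The genuine gap is in the step you yourself single out as a key insight: ruling out the branch ``$u_k$ bounded in $L^\infty_{loc}(\Omega\setminus\Sigma)$ with $\Sigma\neq\emptyset$.'' You argue that the atom $\alpha_i\delta_{a_i}$ forces $u(x)\ge\frac{\alpha_i}{2\pi}\log\frac{1}{|x-a_i|}-C$ near $a_i$, hence $u\to+\infty$ at $a_i$, ``contradicting the uniform upper bound.'' But the upper bound you possess holds only on compact subsets of $\Omega\setminus\Sigma$, i.e.\ away from $a_i$; a function such as $\log\frac{1}{|x-a_i|}$ is bounded on every such compact set and still blows up at $a_i$, so there is no contradiction at all---boundedness away from the atom is perfectly compatible with a logarithmic singularity at the atom. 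The correct obstruction (and the one in \cite{BM}) is integrability, not boundedness: since $u_k\to u$ a.e.\ on $\Omega$ (the set $\Sigma$ being finite), Fatou gives $\|e^{2u}\|_{L^{p'}}\le\liminf_k\|e^{2u_k}\|_{L^{p'}}\le C_2$, while the logarithmic lower bound together with $\alpha_i\ge\frac{2\pi}{p'}$ yields
\begin{equation*}
e^{2p'u(x)}\ \ge\ c\,|x-a_i|^{-p'\alpha_i/\pi}\ \ge\ c\,|x-a_i|^{-2}\quad\text{near }a_i,
\end{equation*}
which is not integrable in $\mathbb{R}^2$. Note that this is exactly where the threshold $\frac{2\pi}{p'}$ enters a second time---an atom of smaller mass would produce no contradiction---whereas your version of the step never invokes it, a sign that the reasoning proves too much. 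With this step replaced by the Fatou argument, the rest of your assembly of the three alternatives goes through.
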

In dimensions $m\ge 3$  the equation \eqref{eq-1}  becomes supercritical and it has singular weak solutions. 
In the case of supercritical elliptic equations
it turns out in general that the class of weak solutions is too large to develop a partial regularity theory and for this reason  one  restricts the attention to  the subclass of {\em stationary weak solutions}. A weak solution is said to be {\em stationary} if it is a
critical point of the associated energy under perturbations of the domain, namely it satisfies
\begin{equation}\label{deren} \frac{d}{dt} E(u(x+t X))_{|_{t=0}}=0\,,\end{equation}
for all smooth vector fields $X$ with compact support in $\Omega$.

This notion of stationary solutions  has already been successfully used in partial regularity results
for harmonic maps  \cite{Hel,E,RS}, minimal surfaces, semi-linear equations with power type non-linearity \cite{Pacard}.  
\par
  
A consequence of the stationary assumption \eqref{deren} is that for any smooth vector field $X\colon\Omega\to\R^3$  the following identity holds
 
\begin{equation}\label{stationary}
\int_{\Omega} \left[ \frac{\partial u}{\partial x_i}\frac{\partial u}{\partial x_k}\frac{\partial X^k}{\partial x_i}-\frac{1}{2}|\nabla u|^2\frac{\partial
X^i}{\partial x_i}+e^u \frac{\partial X^i}{\partial x_i} \right] dx=0\,.
\end{equation}
The identity \eqref{stationary}  can be also understood as a conservation law (see e.g \cite{PR}, \cite{DaLio2}).\par

In some situations the  identity   \eqref{stationary} can be converted into a monotonicity formula. 
This monotonicity formula may imply that the solution $u$ belongs to some Morrey type space ${\cal M}$, which is much smaller than the original space to which $u$ belongs. In the good cases, replacing the original space   by ${\cal M}$ makes the problem critical and this allows one to obtain a partial regularity result for the stationary solutions (see for instance \cite{Hel,E} for harmonic maps, and \cite{Pacard} when the nonlinearity is $u^\alpha$ with $\alpha$ greater than the critical exponent )\,.\par

The monotonicity  formula also  permits to get information on the structure of the singular set of a weak solution and of the concentration set  corresponding  to a sequence of weak solutions (see e.g. \cite{Lin}).
In the case of  \eqref{eq-1} in $3$-dimension the stationary assumption was converted into the following identity (see  \cite{DaLio}):
 \begin{equation}\label{rela-stationary}
   \frac{d}{dr}\left[\frac{1}{r}\int_{B_r(x_0)}(|\nabla u|^2-6 e^u)dx \right]=\frac{2}{r}\int_{\partial B_r(x_0)}
  |\frac{\partial u}{\partial r}|^2 dx-\frac{4}{r}\int_{\partial B_r(x_0)}  e^udx\,,
  \end{equation}
(for $x_0\in\Omega$ and $r>0$ small enough), which does not seem to provide neither any monotonicity information nor any uniform bound  for the  terms 
 $\frac{1}{r}\int_{ B_r(x_0)}|\nabla u|^2 dx$ and   $\frac{1}{r}\int_{ B_r(x_0)}e^{u} dx\,.$ 
 We mention that  in \cite{DaLio} the first author   presented an alternative approach to the partial regularity theory  to \eqref{eq-1} in absence of monotonicity and Morrey type estimates. 
Such an  approach was inspired by the technique introduced by Fang-Hua Lin and Tristan Rivi\`ere in \cite{LT-1} in the context of Ginzburg-Landau equations.  
We recall here the main result obtained  in \cite{DaLio}:

  \begin{thmx}[Theorem 1.1, \cite{DaLio}]\label{regularityDaLio}
Let  $u\in H^1(\Omega)$ be a stationary solution of \eqref{eq-1}, such that $e^u\in L^1(\Omega)$. Then there exists an open set $\Oo\subset\Omega$ such that
  $$u\in C^{\infty}(\Oo)~~ \mbox{and} ~~{\cal{H}}_{dim}(\Omega\setminus\Oo)\le 1\,$$
  where ${\cal{H}}_{dim}$ denotes the dimensional Hausdorff measure\,.
  \end{thmx}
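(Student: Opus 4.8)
The plan is to follow the classical two-step scheme for partial regularity, but with the missing monotonicity formula replaced by the Lin--Rivière compensation technique built on the stationary identity \eqref{rela-stationary}. The dimensionally correct scale-invariant energy for \eqref{eq-1} under the scaling $u_\lambda(x)=u(\lambda x)+2\log\lambda$, which again solves \eqref{eq-1}, is
\[
\Theta(x_0,r):=\frac{1}{r}\int_{B_r(x_0)}\big(|\nabla u|^2+e^u\big)\,dx,
\]
since both $\frac1r\int_{B_r}|\nabla u|^2$ and $\frac1r\int_{B_r}e^u$ are individually invariant. I would define the regular set $\Oo:=\{x_0\in\Omega:\ \Theta(x_0,r)<\ve_0\text{ for some }r\}$, where $\ve_0$ is the threshold of the $\ve$-regularity lemma below. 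Then $\Oo$ is open (for fixed $r$ the map $x_0\mapsto\Theta(x_0,r)$ is continuous since $|\nabla u|^2+e^u\in L^1$), and the theorem reduces to (i) an $\ve$-regularity statement and (ii) a covering estimate for $\Omega\setminus\Oo$.

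The heart of the matter is the $\ve$-regularity assertion: there is $\ve_0>0$ such that $\Theta(x_0,r_0)<\ve_0$ forces $u\in L^\infty(B_{r_0/2}(x_0))$; once $u$ is bounded, $e^u\in L^\infty$ and the bootstrap $-\D u=e^u\in L^\infty\Rightarrow u\in W^{2,p}\ \forall p\Rightarrow u\in C^{1,\gamma}\Rightarrow$ (iterated Schauder) $u\in C^\infty(B_{r_0/2})$. I would seek this through a decay estimate: there exist $\theta\in(0,1)$ and $\alpha>0$ with $\Theta(x_0,r)<\ve_0\Rightarrow\Theta(x_0,\theta r)\le\tfrac12\Theta(x_0,r)$, whose iteration gives $\Theta(x_0,\rho)\le C(\rho/r)^\alpha\Theta(x_0,r)$, i.e.\ $\int_{B_\rho}|\nabla u|^2\le C\rho^{1+\alpha}$; by Morrey's Dirichlet-growth theorem (with $m=3$, so $m-2=1$) this yields $u\in C^{0,\alpha/2}_{loc}$, hence bounded. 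For the one-step decay I would split $u=h+w$ on $B_r$, with $h$ harmonic and $h=u$ on $\partial B_r$, and $-\D w=e^u$, $w=0$ on $\partial B_r$. The harmonic part decays cleanly via interior gradient estimates: $\frac{1}{\theta r}\int_{B_{\theta r}}|\nabla h|^2\le C\theta^2\,\frac1r\int_{B_r}|\nabla h|^2$.

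The main obstacle — and exactly where monotonicity is genuinely absent — is the nonlinear part $w$. With only $e^u\in L^1(B_r)$, the Newtonian potential gives no more than $\nabla w\in L^{3/2,\infty}$, which does not embed into $L^2_{loc}$, so a naive bound on $\int_{B_{\theta r}}|\nabla w|^2$ fails; this is precisely the supercritical gap. Here I would bring in the stationary structure: specializing \eqref{stationary} to radial fields and integrating \eqref{rela-stationary} converts stationarity into control of the flux $\int_{\partial B_\rho}e^u$, the single quantity the potential estimate cannot reach. Following the Lin--Rivière scheme of \cite{LT-1}, I expect this to be coupled with a blow-up/compactness argument by contradiction: if the decay fails along a sequence $u_k$ with $\Theta_k\to0$, one rescales, passes to a limit (stationarity and the $H^1$ bound survive the limit), and contradicts the assumed non-smallness because the limit carries no energy. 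The delicate point is excluding $L^1$-concentration of $e^{u_k}$ in the limit, for which the flux identity extracted from \eqref{rela-stationary} is the essential compensated-compactness input; I expect this to be the technically hardest part of the whole argument.

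Finally, granting the $\ve$-regularity, the dimension bound is routine. For a.e.\ $x_0$ one has $\frac1r\int_{B_r(x_0)}f\,dx=|B_1|\,r^2\,\frac{1}{|B_r|}\int_{B_r(x_0)}f\,dx$ with $f=|\nabla u|^2+e^u\in L^1$, so Lebesgue differentiation gives $\Theta(x_0,r)\to0$, whence $x_0\in\Oo$ and $|\Omega\setminus\Oo|=0$. Setting $S:=\Omega\setminus\Oo$, every $x\in S$ satisfies $\Theta(x,r)\ge\ve_0$ for all small $r$, i.e.\ $\int_{B_r(x)}(|\nabla u|^2+e^u)\ge\ve_0 r$. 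A Vitali covering of $S$ by disjoint balls $B_{r_i}(x_i)$ with $r_i<\delta$, whose fivefold dilates cover $S$, then gives $\sum_i r_i\le\frac{1}{\ve_0}\int_{S_\delta}(|\nabla u|^2+e^u)$, where $S_\delta$ is the $\delta$-neighborhood of $S$. Since $|\nabla u|^2+e^u\in L^1$ and $|S|=0$, the right-hand side tends to $0$ as $\delta\to0$, so the $\HH^1$-content of $S$ vanishes; hence $\HH^1(S)=0$ and in particular $\HH_{dim}(\Omega\setminus\Oo)\le1$.
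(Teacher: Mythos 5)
Your overall skeleton --- the scale-invariant energy, the decomposition of $u$ into a harmonic part plus a Newtonian potential, the decay-and-iteration scheme leading to local boundedness and bootstrap, and the Lebesgue-differentiation/Vitali covering argument giving $\mathcal H^1(\Omega\setminus\Oo)=0$ --- coincides with the paper's, and those steps are correct. But the core of the theorem, the one-step decay of the energy of the nonlinear part under smallness, is exactly the step you leave as a placeholder (``I expect this to be coupled with a blow-up/compactness argument by contradiction\dots the technically hardest part''), and the route you gesture at does not close. A contradiction/compactness argument would require passing to a limit along rescaled stationary solutions $u_k$ with only $\|e^{u_k}\|_{L^1}$ and $\|\nabla u_k\|_{L^2}$ bounds, and this fails in three ways: (a) $e^{u_k}$ then converges only weakly-$*$ to a Radon measure, and ruling out concentration of that measure is precisely the $\ve$-regularity statement being proved, so the scheme is circular without an independent quantitative input; (b) the stationarity identity \eqref{stationary} involves the quadratic expression $\frac{\pa u}{\pa x_i}\frac{\pa u}{\pa x_k}$ and does not survive weak $H^1$ convergence; (c) unlike the harmonic-map setting, where one renormalizes $v_k=(u_k-\bar u_k)/\ve_k$ and compares with the decay of a limiting harmonic function, here the failure of decay is relative (both $\Theta(0,\theta r)$ and $\Theta(0,r)$ tend to $0$), and the division by $\ve_k$ needed to extract a nontrivial limit is incompatible with the exponential nonlinearity, whose natural invariance $u\mapsto u(\lambda\cdot)+2\log\lambda$ is additive, not multiplicative. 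So ``the limit carries no energy'' is not a contradiction of anything.

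What closes the gap in the paper (Theorem \ref{thm-energy-decay}) is a direct quantitative argument, with no compactness. (i) The interior term $\frac1\rho\int_{B_\rho}e^u\,dx$ is estimated in Proposition \ref{propo-e^u} by splitting $B_\rho$ along level sets of the potential $v$: on $\{v\le\sqrt{\eta_1}\}$ one uses the sub-mean-value property of $e^w$; on $\{\sqrt{\eta_1}<v\le\lambda\}$ the Chebyshev bound $|\{v\ge\sqrt{\eta_1}\}|\le C\eta_1$ coming from the $L^2$ potential estimate \eqref{est-v-L2}; and on $\{v\ge\lambda\}$ the identity $\int v e^u\,dx=\int|\nabla v|^2dx\le\int|\nabla u|^2dx$. (ii) On good radii selected by Fubini, the boundary term $\|\nabla v\|_{L^2(\pa B_\rho)}^2$ is bounded by the Lorentz duality $\|\nabla v\|_{L^{2,1}(\pa B_\rho)}\|\nabla v\|_{L^{2,\infty}(\pa B_\rho)}$, where the $L^{2,1}$ factor comes from the $L\log L$ estimate of Lemma \ref{entropy} (which itself needs the test-function inequality \eqref{uplusexp} of Lemma \ref{lem-2.5}) together with the trace embedding $W^{1,1}(\pa B_\rho)\hookrightarrow L^{2,1}(\pa B_\rho)$, while the $L^{2,\infty}$ factor is bounded by $C\int e^u\,dx\le C\eta_1$ via \cite[Lemma A.2]{LT-1}; this product is where smallness enters. (iii) Only then is the stationarity identity \eqref{stationary-identity} invoked, to convert these boundary bounds into decay of the full interior energy. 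Your flux heuristic from \eqref{rela-stationary} corresponds to step (iii) alone; without the quantitative inputs (i)--(ii) it cannot produce the decay, and this is the genuinely missing idea in your proposal.
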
 

Given $x_0\in\Omega$ and $0<r< d(x_0,\partial\Omega)$    we introduce the following energy 
  \begin{equation}\label{energy2}
  {{E}}_{x_0,r}(u)=\frac{1}{r}\int_{B_r(x_0)}|\nabla u|^2 dx+\frac{1}{r}\int_{B_r(x_0)} e^u dx\,
  \end{equation}
  and   set
  $$
 \bar u_{x_0}(r):=\frac{1}{|\partial B_{r}(x_0)|}\int_{\partial B_{r}(x_0)} u(y) dy\,.
  $$\par
 The key result to prove Theorem \ref{regularityDaLio}  was the following assertion about the energy (\ref{energy2}). 

\begin{thmx}[Theorem 2.1, \cite{DaLio}] \label{thm-DaLio} There exists $\eta>0$ small such that if 
 \begin{align}\label{small-energy-1} E_{x_0,2r}(u)\leq\eta\quad\text{and } \bar u_{x_0}(r)E_{x_0,2r}(u) \leq\eta,  
 \end{align}   for some $x_0\in\Omega$ and $0<r<dist(x_0, \pa\Omega)/2$, then $u$ is regular in a small neighborhood of $x_0$.  Consequently, there exists an open set $\Oo\subset\Omega$  such that  the Hausdorff dimension of $\Omega\setminus\Oo$  is at most $1$, and  $u\in C^\infty(\Oo)$.  \end{thmx}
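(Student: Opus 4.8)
The plan is to treat this as an $\varepsilon$-regularity statement: prove a geometric decay lemma for the scale-invariant energy $E_{x_0,\rho}(u)$, iterate it to obtain Morrey-type decay, and then deduce Hölder continuity and, by elliptic bootstrap, smoothness. After a translation and the scaling $u_{x_0,\rho}(x)=u(x_0+\rho x)+2\log\rho$ — under which \eqref{eq-1} is invariant and one checks $E_{0,1}(u_{x_0,\rho})=E_{x_0,\rho}(u)$ — it suffices to establish the following: there exist $\eta>0$ and $\theta\in(0,1/2)$ such that, whenever \eqref{small-energy-1} holds, one has $E_{x_0,\theta r}(u)\le\tfrac12 E_{x_0,r}(u)$ and, moreover, the two smallness conditions are inherited at the scale $\theta r$. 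Iterating along $\rho=\theta^k r$ then gives $E_{x_0,\rho}(u)\le C(\rho/r)^{\alpha}E_{x_0,r}(u)$ for some $\alpha>0$, uniformly for $x_0$ in a neighborhood; this is precisely membership in a Morrey–Campanato class, so $u$ is locally Hölder continuous near $x_0$, hence locally bounded. Once $u\in L^\infty_{loc}$ the right-hand side $e^u$ is bounded, $-\Delta u\in L^\infty_{loc}$, and Schauder theory bootstraps to $u\in C^\infty$.

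For the decay lemma I would compare $u$ with its harmonic replacement. On $B_{2r}(x_0)$ write $u=h+w$, where $\Delta h=0$ with $h=u$ on $\partial B_{2r}(x_0)$, and $w\in H^1_0(B_{2r}(x_0))$ solves $\Delta w=-e^u$. Since $\int\nabla h\cdot\nabla w=0$, the Dirichlet energy splits, and the harmonic part enjoys the decay $\frac1\rho\int_{B_\rho}|\nabla h|^2\le C(\rho/r)^2\,\frac1r\int_{B_r}|\nabla h|^2$, coming from the subharmonicity of $|\nabla h|^2$; choosing $\theta$ small then makes $C\theta^2\le\tfrac14$. The whole difficulty is therefore reduced to showing that the nonlinear remainder — both $\frac1r\int|\nabla w|^2$ and the potential term $\frac1r\int e^u$ on the smaller ball — is dominated by a small multiple of $E_{x_0,r}(u)$.

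This is the main obstacle, and it is exactly where the supercriticality in dimension three and the second hypothesis in \eqref{small-energy-1} enter. A priori only $e^u\in L^1$ is available, which controls $w$ solely in $W^{1,q}$ for $q<3/2$ — too weak to close an $L^2$ energy estimate. To upgrade this I would split $e^u=e^{\bar u_{x_0}(r)}\,e^{\,u-\bar u_{x_0}(r)}$: the oscillation $u-\bar u_{x_0}(r)$ has small Dirichlet energy, so a Moser–Trudinger–type exponential integrability estimate (the partial-regularity estimate the paper sets out to improve) bounds $\int e^{\,u-\bar u_{x_0}(r)}$, while the prefactor $e^{\bar u_{x_0}(r)}$ is kept under control precisely because $\bar u_{x_0}(r)$ cannot be too large relative to $1/E_{x_0,2r}(u)$ — the content of $\bar u_{x_0}(r)E_{x_0,2r}(u)\le\eta$. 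The stationarity identity \eqref{rela-stationary} is then used to close the estimate, converting the bulk quantities into the boundary integrals $\frac1r\int_{\partial B_r}|\frac{\partial u}{\partial r}|^2$ and $\frac1r\int_{\partial B_r}e^u$ and reabsorbing the remainder. Propagating both smallness conditions down to the scale $\theta r$ requires tracking how $\bar u_{x_0}(\cdot)$ varies with the radius, which is controlled once the energy decay is in hand.

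Finally, the regular set $\Oo$ is by definition the (open) set of points admitting a scale at which \eqref{small-energy-1} holds, and on it $u\in C^\infty$ by the above. For the singular set $\Omega\setminus\Oo$, the normalization $\frac1r\int_{B_r}$ in $E_{x_0,r}$ is that of a one-dimensional density; a standard Vitali covering argument applied to the set where this density stays bounded below then yields $\mathcal{H}_{dim}(\Omega\setminus\Oo)\le 1$.
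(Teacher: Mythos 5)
Your overall skeleton (harmonic replacement, geometric decay of the scale-invariant energy, iteration, then bootstrap) matches the architecture of the paper's argument, and your decomposition $u=h+w$ is exactly the paper's $u=w+v$ (harmonic part plus zero-boundary-data potential part), with the same subharmonicity decay for the harmonic piece. But the step on which everything hinges --- controlling $\frac1\rho\int_{B_\rho}e^u$ and the energy of the potential part --- is carried by an estimate that does not exist in dimension three. You invoke ``a Moser--Trudinger--type exponential integrability estimate'' to bound $\int e^{\,u-\bar u}$ from smallness of the Dirichlet energy. In $\R^3$, $H^1$ smallness at a single scale gives no exponential integrability whatsoever: for $u_\beta(x)=\beta\log\frac{1}{|x|}$ one has $\int_{B_R}|\nabla u_\beta|^2dx=4\pi\beta^2R$, which is arbitrarily small for $R$ small, while $e^{u_\beta}=|x|^{-\beta}\notin L^1(B_R)$ as soon as $\beta\geq 3$. (Moser--Trudinger requires $W^{1,n}$, i.e. $W^{1,3}$ here; and the John--Nirenberg route would require smallness of $\frac1r\int_{B_r}|\nabla u|^2$ at \emph{all} centers and scales, which is a Morrey hypothesis you do not have at the first step of the iteration --- indeed it is precisely the hypothesis the paper refuses to assume.) The second hypothesis $\bar u_{x_0}(r)E_{x_0,2r}(u)\le\eta$ cannot rescue this: it only bounds $\bar u$ by $\eta/E$, so the prefactor $e^{\bar u}$ is allowed to blow up as $E\to0$.

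This is not a fixable detail but the central difficulty of the problem, and the paper's machinery exists precisely to bypass it. In place of exponential integrability, the paper estimates $\frac1\rho\int_{B_\rho}e^u$ by splitting $B_\rho$ along level sets of the potential part $v$: where $v\le\sqrt{\eta_1}$ one uses $e^u\le e^{\sqrt{\eta_1}}e^w$ and subharmonicity of $e^w$; where $\sqrt{\eta_1}<v\le\lambda$ one uses a Chebyshev measure bound coming from $\|v\|_{L^2}\lesssim\int e^u$; and where $v>\lambda$ one uses $\int v e^u=\int|\nabla v|^2\le\int|\nabla u|^2$ divided by $\lambda$ (Proposition \ref{propo-e^u}). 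The boundary gradient term needed to exploit the stationarity identity \eqref{stationary-identity} is then controlled not by $L^2$ estimates but by the duality $\|\nabla v\|_{L^2(\pa B_\rho)}^2\le\|\nabla v\|_{L^{2,1}}\|\nabla v\|_{L^{2,\infty}}$ on a good set of radii, where the $L^{2,1}$ norm comes from $W^{1,1}(\pa B_\rho)\hookrightarrow L^{2,1}(\pa B_\rho)$ together with the entropy ($L\log L$) bound $\int|\nabla^2v|\lesssim\int e^u u^+ +\int e^u\lesssim\int e^u+\int|\nabla u|^2$, and the $L^{2,\infty}$ norm from the Lin--Rivi\`ere potential estimate. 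None of these ingredients appears in your proposal, and without a replacement for the false Moser--Trudinger step the decay lemma, and hence the whole proof, does not close.
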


 Actually a simple scaling argument  shows that the second condition of \eqref{small-energy-1} is not necessary in the above theorem. More precisely, if we set $\tilde u(x)=u(x_0+rx)+2\log r$, then $\tilde u$ satisfies \eqref{eq-1} in $B_2$ with the energy bound $E_{0, 2}(\tilde u)=E_{x_0,r}(u)\leq\eta$. As $\eta>0$ is small, one easily gets that $\bar {\tilde u}_0(1)<0$, and hence $\tilde u$ also satisfies the second condition of \eqref{small-energy-1} (see Lemma \ref{negativeaverage} for the details).\par
 As far as the  asymptotic behavior  of stationary solutions in dimensions  $m\ge 3$ is concerned, we mention the paper by  Bartolucci,   Leoni, and Orsina \cite{BLO}. In \cite{BLO}  the standing assumption is the following uniform bound
 \begin{equation}\label{unifbound}
\sup_{\substack{x\in \Omega\\ r>0}}\frac{1}{r^{m-2}}\int_{B_r(x)\cap\Omega}e^{u_n} dx\le C,
\end{equation}
namely $e^{u_n}$ belongs to the Morrey space of index $m/2$. Furthermore, 
under the additional ad-hoc assumption that
\begin{equation}\label{montass}
r\mapsto \Phi_{n,x}(r):= \frac{1}{r^{m-2}}\int_{B_r(x)\cap\Omega}e^{u_n} dx,\end{equation} is monotone nondecreasing, they show that 
if  $e^{u_n}\stackrel{\ast}{\rightharpoonup}\mu$ as $n\to +\infty$ weakly as Radon measures and the $m-2$ Hausdorff measure of the singular set $\Sigma$ of the sequence $(u_n)$ is strictly positive (see definition in \eqref{Singset}), then $\Sigma$ is a $m-2$ rectifiable set with ${\mathcal{H}}^{m-2}(\Sigma)<+\infty$   and    
$\mu=\alpha(x){\mathcal{H}}^{m-2}\mres \Sigma,$ for some density function $\alpha(x)\ge 4\pi$ for ${\mathcal{H}}^{m-2}$-a.e. $x\in\Sigma.$ All the results in \cite{BLO} are proved under the assumption that $\Omega$ is a bounded subset of $\R^n$.\par

The main aim of this 
 paper is to study the asymptotic behaviour of  weak  solutions to \eqref{eq-1} in dimension $m=3$ without  assuming the  Morrey type bound \eqref{unifbound} and the monotonicity condition \eqref{montass}.\par 
 Our approach and the   results we obtain in this paper should also hold for the more general class of equations of the form $-\Delta u = V(x) e^u$ where $V(x)$ is some
smooth given potential. For the sake of simplicity, we have chosen to focus our attention on the case where $V\equiv 1$ in order to  keep the technicalities as low as possible and make the paper more ``readable".\par
 We first introduce some preliminary definition and notations.
We recall the definition of regular points of a weak  solution to \eqref{eq-1}.
 
 \begin{defn} A point $x_0\in\Omega$ is said to be a regular point of $u$ if $u^+\in L^\infty(B_\ve(x_0))$ for some  $\ve>0$. Similarly, $x_0$ is a regular point for a sequence of solutions $(u_k)$ to \eqref{eq-1} if there exists $\ve>0$ (independent of $k$) such that  $$\limsup_{k\to\infty}\|u_k^+\|_{L^\infty(B_\ve(x_0))}<\infty.$$
\end{defn}

Notice that by definition, the set of regular points is an open set, and by the elliptic regularity theory, solutions are smooth around regular points. Moreover by the above remarks if $x_0$ is a regular point of $u$ then $0$ is regular point of $\tilde u(x)=u(x_0+x)$. The {\em singular set} of a sequence $(u_k)$ is defined  by (we do not require that $u_k^+\in L^\infty(B_\ve(x_0))$)
\begin{equation}\label{Singset}
S=\{x\in \Omega:~~\forall \varepsilon>0,~\limsup_{k\to\infty}\|u_k^+\|_{L^\infty(B_\ve(x_0))}=+\infty\}.
\end{equation}

We will show the following  refined version of Theorem \ref{thm-DaLio}, in the sense  that we give a more refined estimated of the Hausdorff measure of the singular set.

 \begin{thm}\label{thm-1}  
 There exists $\eta>0$ (independent of $u, x_0,r_0$) such that if  $E_{x_0,2r_0}(u)\leq\eta$ for some $x_0\in\Omega$ and $0<r_0<\frac12 dist(x_0,\pa\Omega)$ then there exists $C\geq 1$ ($C$ is independent of $u,x_0,r_0$) such that $$u(x)\leq CE_{x_0,2r_0}(u)+2\log\frac{1}{r_0}\quad\text{for every }x\in B_{\frac{ r_0}{4}}(x_0).$$  As a consequence,  $u$ is smooth in $\Oo$  for some open set $\Oo\subset\Omega$ with $\mathcal H^1(\Omega\setminus\Oo)=0$,  where $\mathcal H^s(A)$ denotes the $s$-dimensional Hausdorff measure  of $A$.\end{thm}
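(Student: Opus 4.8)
The plan is to reduce, by scaling, to a normalized estimate on the unit ball, to prove the pointwise bound there via a quantitative energy\emph{-decay} estimate combined with a Riesz-potential estimate, and finally to deduce the Hausdorff statement by a density-covering argument. First set $\tilde u(x)=u(x_0+r_0x)+2\log r_0$, so that $\tilde u$ solves \eqref{eq-1} in $B_2$ and $E_{0,2}(\tilde u)=E_{x_0,2r_0}(u)=:\Lambda\le\eta$; the asserted inequality is then exactly $\tilde u(x)\le C\Lambda$ on $B_{1/4}$. Since $\Lambda\le\eta$ is small, Lemma \ref{negativeaverage} gives $\bar{\tilde u}_0(1)<0$, so the second condition of \eqref{small-energy-1} holds automatically and Theorem \ref{thm-DaLio} already yields that $\tilde u$ is smooth near $0$; the remaining task is to upgrade this qualitative regularity to the quantitative \emph{linear} bound in $\Lambda$.

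The heart of the proof, and the step I expect to be the main obstacle, is a quantitative energy-\emph{decay} estimate. Building on (and improving) the method of \cite{DaLio}, I would compare $\tilde u$ on each small ball $B_\rho(x)\subset B_1$ with its harmonic replacement and use the stationarity identity \eqref{rela-stationary} to control the error; the subharmonicity of the squared gradient of the harmonic part then yields a contraction
\[
E_{x,\theta\rho}(\tilde u)\le\tfrac12\,E_{x,\rho}(\tilde u),
\]
for a fixed $\theta\in(0,1)$, valid as long as the local energy stays below $\eta$. Iterating gives Morrey--Campanato decay with a positive exponent $\beta$, namely $\int_{B_\rho(x)}(|\nabla\tilde u|^2+e^{\tilde u})\,dy\le C\Lambda\,\rho^{1+\beta}$ for $x\in B_1$ and $0<\rho<\tfrac12$. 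The delicate point is to obtain a genuine contraction rather than mere boundedness: one must absorb the unfavourable-sign contributions in \eqref{rela-stationary} (the $-6e^{\tilde u}$ inside the bracket and the boundary term $-\frac4\rho\int_{\partial B_\rho}e^{\tilde u}$), which is where the absence of a monotonicity formula must be compensated by the harmonic comparison and the smallness of $\Lambda$.

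Granting this decay, the pointwise bound follows cleanly. On $B_{1/2}$ write $\tilde u=h+w$, where $h$ is harmonic and $-\D w=e^{\tilde u}$ in $B_{1/2}$ with $w=0$ on $\partial B_{1/2}$; then $w\ge0$ and $h\le\tilde u$. Since $t^+\le e^t$ for all $t$, we have $h^+\le\tilde u^+\le e^{\tilde u}$, so $\int_{B_{1/2}}h^+\le\int_{B_{1/2}}e^{\tilde u}\le 2\Lambda$, and the interior mean-value estimate for the subharmonic function $h^+$ gives $\sup_{B_{1/4}}h\le C\Lambda$. For the potential part, $w(x)\le C\int_{B_{1/2}}|x-y|^{-1}e^{\tilde u(y)}\,dy$, and summing over dyadic annuli centred at $x$ together with the Campanato decay $\int_{B_\rho(x)}e^{\tilde u}\le C\Lambda\rho^{1+\beta}$ produces a geometric series of ratio $2^{-\beta}$ bounded by $C\Lambda$; hence $\sup_{B_{1/4}}w\le C\Lambda$. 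Adding the two bounds gives $\sup_{B_{1/4}}\tilde u\le C\Lambda$, and undoing the scaling gives the stated estimate $u(x)\le CE_{x_0,2r_0}(u)+2\log\frac1{r_0}$ on $B_{r_0/4}(x_0)$.

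For the consequence, introduce the energy measure $\nu=(|\nabla u|^2+e^u)\,dx$, a Radon measure finite on compact subsets of $\Omega$ and absolutely continuous with respect to Lebesgue measure $\mathcal L^3$. Since $E_{x,2r}(u)=\nu(B_{2r}(x))/(2r)$, the pointwise bound shows that $x$ is a regular point whenever $E_{x,2r}(u)\le\eta$ for some small $r$; hence the singular set $\Sigma=\Omega\setminus\Oo$ is contained in $\{x:\limsup_{\rho\to0}\nu(B_\rho(x))/\rho\ge\eta\}$. By the standard density-comparison theorem, $\mathcal H^1(\Sigma\cap K)\le C\eta^{-1}\nu(K)<\infty$ for every compact $K\subset\Omega$, so $\dim_{\mathcal H}\Sigma\le1$ and therefore $\mathcal L^3(\Sigma)=0$; absolute continuity then forces $\nu(\Sigma)=0$, and a second application of the density estimate yields $\mathcal H^1(\Omega\setminus\Oo)=0$, as claimed.
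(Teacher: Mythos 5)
Your architecture coincides with the paper's: rescale to $B_2$, establish a Morrey-type decay $\int_{B_\rho(x)}\bigl(|\nabla \tilde u|^2+e^{\tilde u}\bigr)dy\le C\Lambda\rho^{1+\beta}$ by iterating a one-step energy contraction, convert it into the pointwise bound via the Green representation summed over dyadic annuli, and conclude with a density/covering argument. The parts you execute in detail are sound: the splitting $\tilde u=h+w$ with the subharmonic mean-value estimate for $h^+$ and the Riesz-potential bound for $w$ reproduce exactly the paper's final step, and your two-pass density argument (finiteness of $\mathcal H^1(\Sigma\cap K)$, hence $\mathcal L^3(\Sigma)=0$, hence $\nu(\Sigma)=0$ by absolute continuity, hence $\mathcal H^1(\Sigma)=0$) correctly fills in what the paper dismisses as ``standard''.

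The genuine gap is exactly where you anticipate it: the one-step contraction $E_{x,\theta\rho}(\tilde u)\le\frac12E_{x,\rho}(\tilde u)$. The mechanism you propose --- harmonic replacement, subharmonicity of the gradient squared of the harmonic part, the stationarity identity \eqref{rela-stationary}, smallness of $\Lambda$ --- is necessary but not sufficient; making it close is the entire content of the paper's Section \ref{section-2} (Theorem \ref{thm-energy-decay}), and two indispensable estimates are absent from your sketch. First, to make $\frac1\rho\int_{B_\rho}e^{\tilde u}$ contract one cannot argue through the harmonic part alone: the paper splits $B_\rho$ along level sets of the potential part $v$ (Proposition \ref{propo-e^u}), using an $L^2$ bound on $v$ from Minkowski's integral inequality plus Chebyshev, and the three regimes $v\le\sqrt{\eta_1}$, $\sqrt{\eta_1}<v\le\lambda$, $v>\lambda$ produce the competing terms $\rho^2\int e^u$, $e^{\lambda}\eta_1\rho^{-1}\int e^u$ and $(\rho\lambda)^{-1}\int|\nabla u|^2$ whose balancing is what permits a contraction. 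Second, and more seriously, the stationarity identity \eqref{stationary-identity} trades interior Dirichlet energy for the boundary term $\int_{\partial B_\rho}|\nabla_T u|^2d\sigma$, and the contribution of $v$ to this term cannot be bounded naively: with $e^{\tilde u}$ merely in $L^1$, $\nabla v$ lies only in weak spaces, and the paper needs the Lorentz duality $\|\nabla v\|_{L^2(\partial B_\rho)}^2\le\|\nabla v\|_{L^{2,1}(\partial B_\rho)}\|\nabla v\|_{L^{2,\infty}(\partial B_\rho)}$ on \emph{good radii} selected by Fubini, where the $L^{2,1}$ factor comes from the entropy estimate $\int_{B_1}|\nabla^2v|dx\lesssim\int|\Delta v|\log(2+|\Delta v|)dx$ combined with the trace embedding $W^{1,1}(\partial B_\rho)\hookrightarrow L^{2,1}(\partial B_\rho)$, and the $L^{2,\infty}$ factor from a Riesz-potential estimate (Lemma A.2 of \cite{LT-1}). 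Without these ingredients the unfavourable-sign boundary terms cannot be absorbed and the contraction does not close. A lesser point: your contraction at a \emph{fixed} ratio $\theta$ is stronger than what the method yields; the good radius exists only somewhere in an interval $[\underline\rho,\bar\rho]$ and varies from step to step, which is why the paper's iteration (Lemma \ref{lem-decay}) is written for variable radii and still extracts a fixed Morrey exponent.
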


  Now we consider  a sequence of stationary solutions   $(u_k)\subset H^1(\Omega)$   to \eqref{eq-1} satisfying 
    \begin{align}\label{bound-int} \int_{\Omega} e^{u_k}dx\leq C.   \end{align}    
  
  \begin{thm}\label{thm-2} Let $(u_k)$ be a sequence of stationary solutions to \eqref{eq-1} satisfying \eqref{bound-int}. 
   Assume also  that         \begin{align}\label{bound-grad}  \int_{\Omega} |\nabla u_k|^2dx\leq C. \end{align}    Then   
one of the following holds: \begin{itemize} \item[(i)] (vanishing) $u_k\to-\infty$ locally uniformly in $\Omega$.    \item[(ii)] (compactness) There exists an open set $\mathcal O\subset\Omega$ with $\mathcal H^1(\Omega\setminus\mathcal O)=0$ such that, up to a subsequence,  for some   $u\in H^1(\Omega)$  we have that $u_k\to u$ in $ C^2_{loc}(\mathcal O)$, and $$e^{u_k}\to e^u \quad\text{in }L^1_{loc}(\Omega).$$  
\end{itemize} 
 \end{thm}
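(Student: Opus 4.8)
The plan is to run an $\varepsilon$-regularity / concentration-compactness scheme built on Theorem \ref{thm-1}. First I would pass to the energy measures $\sigma_k:=(|\nabla u_k|^2+e^{u_k})\,dx$, which by \eqref{bound-int}--\eqref{bound-grad} satisfy $\sigma_k(\Omega)\le C$; after extracting a subsequence, $\sigma_k\stackrel{*}{\rightharpoonup}\sigma$ as Radon measures for some finite $\sigma$. Since $E_{x_0,r}(u_k)=\frac1r\sigma_k(B_r(x_0))$, the natural regular set is
\[
\mathcal O:=\Big\{x_0\in\Omega:\ \exists\, r_0\in(0,\tfrac12 d(x_0,\pa\Omega)),\ \sigma(\overline{B_{2r_0}(x_0)})<2\eta r_0\Big\},
\]
with $\eta$ the threshold of Theorem \ref{thm-1} and $\Sigma:=\Omega\setminus\mathcal O$. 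The set $\mathcal O$ is open, and for $x_0\in\mathcal O$ upper semicontinuity of weak-$*$ limits on the compact set $\overline{B_{2r_0}(x_0)}$ gives $E_{x_0,2r_0}(u_k)<\eta$ for $k$ large; Theorem \ref{thm-1} then yields the uniform bound $u_k\le C\eta+2\log\frac1{r_0}$ on $B_{r_0/4}(x_0)$. Thus $\mathcal O$ consists of regular points of the sequence, and the standard density estimate for $\sigma$ gives $\mathcal H^1(\Sigma)<\infty$ for free, since $\Sigma\subseteq\{x:\limsup_{r\to0}\sigma(B_r(x))/r\ge\eta\}$.

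Second, I would establish the dichotomy on $\mathcal O$ (working on each connected component of $\Omega$ separately). On any $B\subset\subset\mathcal O$ the bound above makes $e^{u_k}$ uniformly bounded, so writing $u_k=h_k+p_k$ with $p_k$ the Newtonian potential of $e^{u_k}$ (uniformly $C^{1,\alpha}$) and $h_k$ harmonic, the gradient bound \eqref{bound-grad} forces $h_k$, hence $u_k$, to have uniformly bounded oscillation on compact subsets. Consequently, on each ball the sequence either stays bounded or tends to $-\infty$ uniformly; because the oscillation bounds propagate across overlapping balls, both behaviours are relatively open (hence clopen) in $\mathcal O$, so exactly one occurs throughout. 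In the bounded case $u_k$ is bounded in $W^{2,p}_{loc}(\mathcal O)$, elliptic bootstrap gives $u_k\to u$ in $C^2_{loc}(\mathcal O)$ along a subsequence, and lower semicontinuity with \eqref{bound-grad} yields $u\in H^1(\Omega)$; otherwise $u_k\to-\infty$ locally uniformly on $\mathcal O$, the candidate for alternative (i).

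The main obstacle is to rule out energy concentration on $\Sigma$, i.e. to upgrade $\mathcal H^1(\Sigma)<\infty$ to $\mathcal H^1(\Sigma)=0$ and to prove $e^{u_k}\to e^u$ in $L^1_{loc}(\Omega)$, equivalently that the defect measure $\sigma-(|\nabla u|^2+e^u)\,dx$ vanishes. This is delicate precisely because a priori a codimension-one (``line'') bubbling, modelled on a planar Liouville bubble extended trivially in the third variable, is compatible with the $L^1$ and $H^1$ bounds alone, and it must be excluded using the full strength of the improved estimate. The key quantitative input is that Theorem \ref{thm-1} gives not only regularity but the explicit pointwise bound, which on a good ball yields $e^{u_k}\le C r_0^{-2}$ and hence the scale-invariant Morrey estimate $\int_{B_{r_0/4}(x_0)}e^{u_k}\,dx\le C r_0$. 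I would propagate this into a uniform bound $\sigma_k(B_r(x))\le C r$ up to $\Sigma$, so that the defect is supported on $\Sigma$ with bounded $1$-density, and then feed it into the stationarity/Pohozaev identity \eqref{rela-stationary}: combining that identity with the Morrey control and the uniform gradient bound \eqref{bound-grad} should force the $1$-density of the defect to vanish, precluding line concentration and giving $\sigma\ll\mathcal L^3$. Once $\sigma$ is absolutely continuous, the Lebesgue-point argument for $L^1$ densities shows $\mathcal H^1(\{x:\limsup_{r\to0}\sigma(B_r(x))/r\ge\eta\})=0$, hence $\mathcal H^1(\Sigma)=0$; absolute continuity of the $e^{u_k}$-part together with the a.e. convergence on $\mathcal O$ and Vitali's theorem then gives $e^{u_k}\to e^u$ in $L^1_{loc}(\Omega)$, completing alternative (ii) and, in the vanishing regime, making alternative (i) genuinely locally uniform. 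I expect this exclusion of concentration---reconciling the Pohozaev identity with the improved pointwise bound to kill the defect measure---to be the crux of the whole argument.
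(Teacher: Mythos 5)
Your first two paragraphs are sound and close in spirit to the paper's own argument (an $\eta$-regularity set built on Theorem \ref{thm-1}, a potential-plus-harmonic decomposition, a Harnack-type dichotomy for the harmonic parts; a minor point is that your clopen argument needs $\mathcal O$ connected, which the paper sidesteps by making its harmonic part $w_k$ harmonic on all of $\tilde\Omega$, across $\Sigma$). The third paragraph, which you yourself identify as the crux, is where the genuine gap lies. Your regular set is defined by smallness of the \emph{full} energy density $\sigma(B_{2r}(x))/r$, so your $\Sigma$ contains every point where the gradient alone has moderate density. Nothing you prove excludes, say, $|\nabla u_k|^2\rightharpoonup \mathcal H^1\mres L$ for a segment $L$ along which the $e^{u_k}$-density vanishes: every point of $L$ lies in your $\Sigma$, $\mathcal H^1(L)>0$, and the density estimate only gives $\mathcal H^1(\Sigma)<\infty$. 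Theorem \ref{thm-1} cannot be applied at such points (its hypothesis fails there), so your scheme stalls at finite $\mathcal H^1$. The paper's way out is Theorem \ref{thm-energy-decay}, which you never invoke: its hypothesis is smallness of $\int e^{u}$ \emph{only}, and the decay factor $\gamma$ may be chosen small relative to the gradient-energy bound $M$, so that a single application converts ``vanishing $e^{u_k}$-density plus finite gradient density'' into ``full energy $\le\eta$'' at a smaller scale (Proposition \ref{propo-seq-regu}, then Lemma \ref{lem-decay}). This is what allows the paper to take as bad set $\Sigma_1\cup\Sigma_2$, where $\Sigma_1$ requires positive $e^{u_k}$-density and $\Sigma_2$ requires \emph{infinite} gradient density; $\mathcal H^1(\Sigma_2)=0$ is then automatic, and no set of positive $\mathcal H^1$-measure is left unaccounted for.

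The proposed repair --- propagating $\sigma_k(B_r(x))\le Cr$ up to $\Sigma$ and feeding it into \eqref{rela-stationary} to conclude $\sigma\ll\mathcal L^3$ --- is speculation rather than proof, and it aims at a statement stronger than the theorem asserts. The Morrey bound is unjustified for the gradient part (the pointwise bound of Theorem \ref{thm-1} controls $e^{u_k}$, not $|\nabla u_k|^2$, and the good-ball scale degenerates near $\Sigma$), and absolute continuity of the full limit $\sigma$ amounts to ruling out any gradient defect, which the paper obtains only under the extra Morrey hypothesis of Corollary \ref{cor-1}; Theorem \ref{thm-2} deliberately claims nothing about $|\nabla u_k|^2$, and a gradient defect carried by an $\mathcal H^1$-null set is never excluded. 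Your final step is also circular: a.e.\ convergence plus weak-$*$ convergence to an absolutely continuous limit does \emph{not} imply $L^1_{loc}$ convergence (spreading-bump counterexamples), and Vitali's theorem requires uniform integrability of $(e^{u_k})$ as an input, which you never establish. The paper gets exactly this, cheaply: testing the equation with $u_k^+\vp^2$ (Lemma \ref{lem-2.5}, estimate \eqref{uplusexp}) together with \eqref{bound-grad} gives a local bound on $\int u_k^+e^{u_k}dx$, hence equi-integrability of $(e^{u_k})$; this yields $e^{u_k}\to e^u$ in $L^1_{loc}$, makes $\mu_1$ absolutely continuous so that $\mathcal H^1(\Sigma_1)=0$, and in the vanishing case forces $\mu_1\equiv0$, after which the Theorem \ref{thm-energy-decay} argument shows $\Sigma_2=\emptyset$, so that $u_k\to-\infty$ locally uniformly on all of $\Omega$ --- a point your outline also leaves hanging.
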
 
We will call the set $\Sigma=\Omega\setminus\mathcal O$    {\em concentration set} and it  is the union of the following two sets

$$\Sigma_1=\left\{x\in\Omega:\limsup_{r\to0}\frac1r\mu_1(B_r(x))>0\right\},~~\Sigma_2:=\left\{ x\in \Omega: \liminf_{r\to0}\frac{\mu_2(B_r(x))}{r}=\infty\right\},$$  
where $\mu_1,\mu_2$ are Radon measures such that  $e^{u_k}\rightharpoonup \mu_1,\quad |\nabla u_k|^2\rightharpoonup \mu_2.$
\medskip

The above theorem is sharp in the sense that if \eqref{bound-grad} is not satisfied, then the theorem is not true. For instance  (see  \cite[example 5.3]{BLO}) the sequence of regular solutions  $$u_k(x)=\log\left(\frac{8k^2}{(1+k^2|\bar x|^2)^2}\right) ,\quad x=(\bar x, x_3)\in\R^2\times\R,$$   does not satisfy \eqref{bound-grad}, $\mathcal H^1(\Sigma)=1$ and   $u_k\to-\infty$ outside $\Sigma$ (here $\Sigma $ is the $x_3$ axis).
Let us also emphasize that if \eqref{bound-grad}  is violated, then not necessarily that the   singular set is non-empty.   For instance,  it is possible to have a sequence of regular  solutions $(u_k)$ in $B_1$  of the form $$u_k(x)=kx_1-k^2+o_k(1)\quad\text{in }B_1,$$  for which  $\sup_{B_1}u_k\to-\infty$, but $|\nabla u_k|=k+o(1)\to\infty$, see  end of Section \ref{section-2}.

In our next theorem we give  an improvement of  Theorem \ref{thm-1}  under a Morrey-type assumption on the gradient. 
 \begin{thm} \label{thm-3}  Let $u$ be a stationary solution to \eqref{eq-1}.   Assume that       \begin{align}\label{Morrey-bound}  \frac1r\int_{B_r(x_0)} |\nabla u|^2dx\leq C\quad\text{for every } x_0\in\Omega, 0<r<dist(x_0,\pa\Omega). \end{align}    Then $u\in C^\infty(\Omega\setminus\Sigma)$ for some  discrete  set $\Sigma\subset\Omega$  (possibly empty). Moreover, if $\Sigma\neq\emptyset$ then    for every $x_0\in\Sigma$ we have \begin{align}\label{beha-singu}u(x)=-2\log|x-x_0|+O(1)\quad\text{around }x_0,\end{align}   and  \begin{align}\label{int}\lim_{r\to0} \frac{1}{r}\int_{B_r(x_0)}e^{u}dx=8\pi.\end{align}  Furthermore,   for  each $x_0\in \Sigma$ there exists $\vp\in C^\infty(S^2)$  satisfying  \begin{equation}\label{phiS}-\D_{S^2} \vp+1=e^{2\vp}\quad\text{in }S^2,\end{equation} and    \begin{align}\label{homog}  \lim_{r\to0} (u(x_0+r\cdot )+2\log r-\log2)= 2\vp\quad\text{in } C^\ell(S^2),~~\forall \ell\in\N.   \end{align}   \end{thm}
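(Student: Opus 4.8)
The plan is to localise the problem at a singular point and reduce the whole statement to the classification of the blow-up (tangent) profile. The first observation is that the Morrey hypothesis \eqref{Morrey-bound} on $|\nabla u|^2$ automatically upgrades to the companion bound $\frac1r\int_{B_r(x_0)}e^u\,dx\le C$. Indeed, integrating \eqref{eq-1} gives $\int_{B_s}e^u=-\int_{\pa B_s}\pa_\rho u$, so by Cauchy--Schwarz $\int_{B_s}e^u\le (4\pi s^2)^{1/2}\big(\int_{\pa B_s}|\nabla u|^2\big)^{1/2}$; choosing via Chebyshev a radius $s\in(r,2r)$ with $\int_{\pa B_s}|\nabla u|^2\le \frac1r\int_{B_{2r}}|\nabla u|^2\le 2C$ and using that $s\mapsto\int_{B_s}e^u$ is nondecreasing yields $\frac1r\int_{B_r}e^u\le C'$. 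Hence $E_{x_0,r}(u)\le C'$ for all $x_0$ and all small $r$, so Theorem~\ref{thm-1} applies at every point and scale; the singular set $\Sigma$ is then exactly the set where the scale-invariant energy density stays above the threshold $\eta$, and by $\ve$-regularity plus interior elliptic estimates the rescalings $u_r(y):=u(x_0+ry)+2\log r$ (which again solve \eqref{eq-1}, are stationary, and inherit the two Morrey bounds) converge along subsequences in $C^2_{loc}(\R^3\setminus S_\infty)$ to a global stationary solution $U$, with $S_\infty$ finite.

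The core of the argument is to show that every such tangent $U$ is homogeneous, i.e. $U(y)=-2\log|y|+\psi(y/|y|)$ for some $\psi\in C^\infty(S^2)$. Granting this, substituting the ansatz into $-\D U=e^U$ collapses the equation to $-\D_{S^2}\psi+2=e^{\psi}$, and the change of unknown $\psi=2\vp+\log2$ turns it into the prescribed-curvature equation \eqref{phiS}, with $\vp\in C^\infty(S^2)$ by elliptic regularity on the sphere. Integrating \eqref{phiS} over $S^2$ gives $\int_{S^2}e^{2\vp}=4\pi$, hence $\int_{B_1}e^{U}=\int_{S^2}e^{\psi}=8\pi$; passing this through $u_{r_k}\to U$ yields \eqref{int}. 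The homogeneity also forces $\Sigma$ to be discrete: if $x_k\to x_0$ were singular with $r_k:=|x_k-x_0|$, then $(x_k-x_0)/r_k\to\omega\in S^2$ would be a concentration point of $u_{r_k}$, which is impossible since $U$ is smooth near $\omega\neq0$ and $\ve$-regularity then forces $u_{r_k}$ to be regular there. Thus $u\in C^\infty(\Omega\setminus\Sigma)$ with $\Sigma$ discrete.

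I expect the homogeneity of $U$ to be the main obstacle, precisely because, as noted after \eqref{rela-stationary}, the stationary identity is not a priori a monotonicity formula for this non-homogeneous nonlinearity. The plan is to extract homogeneity from an \emph{almost}-monotonicity: from $\int_{B_r}e^u=-\int_{\pa B_r}\pa_\rho u$ and Cauchy--Schwarz one has $\int_{\pa B_r}|\pa_\rho u|^2\ge \frac1{4\pi}\big(\tfrac1r\int_{B_r}e^u\big)^2$, with equality exactly when $\pa_\rho u$ is constant on $\pa B_r$. Feeding this and the Morrey bounds into \eqref{rela-stationary}, I would show that the scale-invariant quantity $\frac1r\int_{B_r}(|\nabla u|^2-6e^u)$ admits a limit as $r\to0$ and that, on the tangent, its constancy together with the equality case above forces $\pa_\rho(U+2\log|y|)\equiv0$, i.e. homogeneity. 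Controlling the non-monotone error terms uniformly in the scale is exactly where the Morrey bound \eqref{Morrey-bound} must be used essentially.

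The final point is to promote the subsequential, homogeneity information to the full limit \eqref{homog}. The asymptotic \eqref{beha-singu} follows once the $C^0(S^2)$ convergence of $u(x_0+r\cdot)+2\log r$ is known, since the limit $2\vp+\log2$ is bounded; moreover $r\bar u_{x_0}'(r)=-\frac1{4\pi}\int_{B_r}e^u\to-2$ shows directly that the spherical average is $-2\log r+O(1)$. The delicate issue is uniqueness of the tangent: solutions of \eqref{phiS} form the non-compact Möbius family, so different scales could a priori select different $\vp$. I would rule this out by a Lojasiewicz--Simon inequality for the energy on $S^2$ associated with \eqref{phiS} (equivalently, by the non-degeneracy of \eqref{phiS} modulo the conformal group), which upgrades subsequential convergence to convergence of the whole blow-up family and, after bootstrapping with elliptic estimates, to $C^\ell(S^2)$ for every $\ell$. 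This uniqueness step, together with the homogeneity of the tangent, is where I expect the real work to lie.
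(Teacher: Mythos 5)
Your overall skeleton --- blow up at a singular point, show the tangent is homogeneous, reduce to the sphere equation \eqref{phiS}, then prove uniqueness of the tangent --- is indeed the paper's skeleton, and several of your steps are fine (upgrading \eqref{Morrey-bound} to a bound on $\frac1r\int_{B_r}e^u\,dx$, deriving \eqref{phiS} and \eqref{int} from homogeneity, discreteness of $\Sigma$ from homogeneity of tangents plus $\mathcal H^1=0$). But the two steps you yourself identify as ``the real work'' are precisely where your proposed mechanisms fail or are absent, so the proposal has genuine gaps. First, homogeneity of the tangent: your plan is to get almost-monotonicity of the raw quantity $\frac1r\int_{B_r}(|\nabla u|^2-6e^u)\,dx$ from \eqref{rela-stationary} plus Cauchy--Schwarz. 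This does not work: the negative term on the right of \eqref{rela-stationary} is the \emph{boundary} integral $\frac4r\int_{\partial B_r}e^u\,d\sigma$, which is not controlled scale-by-scale by the Morrey bounds (summing $\int_{r}^{2r}\frac4t\int_{\partial B_t}e^u\,d\sigma\,dt$ over dyadic scales gives a divergent bound), so you cannot conclude that the quantity has bounded negative variation, let alone a limit; and the equality case of your Cauchy--Schwarz step only says $\partial_\nu u$ is constant on spheres, not that $\partial_\nu u=-2/r$. The paper's key new ingredient, which your proposal lacks, is the \emph{corrected} energy $\mathcal E(u,x_0,r)$ of \eqref{mono-energy}, containing the boundary term $\frac{2}{r^2}\int_{\partial B_r}(u+2\log r)\,d\sigma$: Proposition \ref{propo-mono} shows its derivative is exactly the nonnegative dissipation $\frac1r\int_{\partial B_r}\bigl(\partial_\nu u+\frac2r\bigr)^2d\sigma$, i.e.\ precisely the quantity whose vanishing \emph{is} homogeneity. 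Moreover, to know $\mathcal E$ has a finite limit as $r\to0$ one needs a uniform lower bound on $u+2\log r$ on spheres; this is Lemma \ref{lem-lower}, proved via the vanishing/compactness dichotomy of Theorem \ref{thm-2}, an ingredient entirely missing from your proposal (you tacitly assume blow-up limits exist as functions, ignoring the alternative $u_{r_k}\to-\infty$; this same lemma is also what makes the two-sided bound \eqref{beha-singu} hold for \emph{all} small $r$, not just along a subsequence).

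Second, uniqueness of the tangent: you defer to a Lojasiewicz--Simon inequality, which you do not carry out, and which is genuinely delicate here since the critical set of the sphere functional is the noncompact M\"obius family (so one would have to verify analyticity/integrability hypotheses near degenerate critical points); and in any case a Lojasiewicz--Simon argument would still require the monotone quantity and its dissipation as input, which by the first gap you do not have. The paper's route is much more elementary and bypasses Lojasiewicz--Simon entirely: the monotonicity formula together with Lemma \ref{lem-lower} gives finiteness of the total dissipation $\int_0^{r_0}\frac1t\int_{\partial B_t}\bigl(\partial_\nu u+\frac2t\bigr)^2d\sigma\,dt$ (Lemma \ref{lem-u-nu}), and this feeds into the direct Cauchy-in-$L^2(\partial B_1)$ estimate across scales of Lemma \ref{Cauchy} (see also Remark \ref{rem-4.5}); combined with the uniform $C^\ell$ bounds on annuli coming from \eqref{beha-singu} and elliptic estimates, this yields convergence of the full family $u(x_0+r\cdot)+2\log r$ and hence \eqref{homog}. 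In short: the structure of your argument is right, but without the corrected monotone energy \eqref{mono-energy}, both the homogeneity step and the uniqueness step remain unproved, and the substitute arguments you sketch (almost-monotonicity of the naive stationary quantity; Lojasiewicz--Simon) are respectively unworkable and unsubstantiated.
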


Combining Theorem \ref{thm-2} and Theorem \ref{thm-3} we get  the following corollary. 

\begin{cor} \label{cor-1}Assume that we are in case $(ii)$ of Theorem \ref{thm-2}, and that the sequence $(u_k)$ satisfies the following stronger condition $$ \frac1r\int_{B_r(x_0)}|\nabla u_k|^2dx\leq M\quad\text{for every }B_r(x_0)\subset\Omega.$$   Then $u_k\to u$ in $H^1_{loc}(\Omega)$, and the concentration set $\Omega\setminus \mathcal O$ is discrete.  

\end{cor}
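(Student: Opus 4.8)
The plan is to upgrade the limit $u$ to a stationary solution satisfying the Morrey hypothesis \eqref{Morrey-bound}, apply Theorem \ref{thm-3} to it, and then identify the concentration set $\Omega\setminus\mathcal O$ with the (discrete) singular set of $u$ produced by that theorem; along the way the absence of a gradient defect will give the $H^1_{loc}$ convergence. First I would pass the Morrey bound to the limit. By the uniform bound the sequence $u_k\rightharpoonup u$ weakly in $H^1_{loc}(\Omega)$ (the weak limit must be $u$, since $u_k\to u$ in $C^2_{loc}(\mathcal O)$ and $\mathcal H^1(\Omega\setminus\mathcal O)=0$), so weak lower semicontinuity yields $\frac1r\int_{B_r(x_0)}|\nabla u|^2\,dx\le M$ for every $B_r(x_0)\subset\Omega$; hence \eqref{Morrey-bound} holds for $u$. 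Applying lower semicontinuity on open balls to the defect measure gives $\mu_2(B_r(x))\le Mr$, so $\frac{\mu_2(B_r(x))}{r}\le M$ for all $r$, whence $\Sigma_2=\emptyset$ and $\Sigma:=\Omega\setminus\mathcal O=\Sigma_1$.

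Next I would rule out a gradient defect. On $\mathcal O$ the $C^2_{loc}$ convergence forces $\mu_2$ to coincide with $|\nabla u|^2\,dx$, so the singular part of $\mu_2$ is carried by $\Sigma$, a set with $\mathcal H^1(\Sigma)=0$. Covering $\Sigma$ by balls $B_{r_i}(x_i)$ with $\sum_i r_i<\ve$ and using $\mu_2(B_{r_i}(x_i))\le Mr_i$ gives $\mu_2(\Sigma)\le M\ve$; letting $\ve\to0$ yields $\mu_2(\Sigma)=0$, so $\mu_2=|\nabla u|^2\,dx$ with no singular part. For every radius $\rho$ with $\mu_2(\pa B_\rho(x_0))=0$ this forces $\|\nabla u_k\|_{L^2(B_\rho(x_0))}\to\|\nabla u\|_{L^2(B_\rho(x_0))}$, which combined with weak $L^2$ convergence upgrades to strong convergence $\nabla u_k\to\nabla u$ in $L^2_{loc}$; together with $u_k\to u$ in $L^2_{loc}$ this proves $u_k\to u$ in $H^1_{loc}(\Omega)$, i.e.\ statement $(i)$. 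The same strong $H^1_{loc}$ convergence, together with $e^{u_k}\to e^u$ in $L^1_{loc}$, lets me pass to the limit in the stationarity identity \eqref{stationary} (the quadratic gradient terms converge in $L^1_{loc}$ since $\nabla u_k\to\nabla u$ strongly in $L^2_{loc}$), so $u$ is a stationary weak solution of \eqref{eq-1}.

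Having checked its hypotheses, Theorem \ref{thm-3} gives $u\in C^\infty(\Omega\setminus\Sigma_u)$ with $\Sigma_u$ discrete. To conclude $(ii)$ I would show $\Sigma=\Sigma_u$. The inclusion $\Sigma_u\subseteq\Sigma$ is immediate, since $u$ is smooth on $\mathcal O$. For $\Sigma\subseteq\Sigma_u$, take $x_0\notin\Sigma_u$; then $u$ is smooth near $x_0$, so $E_{x_0,2\rho}(u)\to0$ as $\rho\to0$ and I may fix $\rho$ with $E_{x_0,2\rho}(u)<\eta$. By the $H^1_{loc}$ and $L^1_{loc}$ convergences, $E_{x_0,2\rho}(u_k)\to E_{x_0,2\rho}(u)<\eta$, so $E_{x_0,2\rho}(u_k)\le\eta$ for all large $k$; Theorem \ref{thm-1} then gives $u_k(x)\le C\eta+2\log\frac1\rho$ on $B_{\rho/4}(x_0)$, uniformly in $k$. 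Thus $u_k^+$ is uniformly bounded near $x_0$; an elliptic bootstrap (bounding $e^{u_k}$, hence $-\Delta u_k$, in $L^\infty$ and then promoting to $C^\infty$ estimates) gives $u_k\to u$ in $C^2$ there, so $x_0\in\mathcal O$, i.e.\ $x_0\notin\Sigma$. Therefore $\Sigma=\Sigma_u$ is discrete.

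The step I expect to be the main obstacle is ruling out the gradient defect measure, i.e.\ the strong $H^1_{loc}$ convergence: the remaining steps are either lower semicontinuity or direct applications of the quoted theorems, whereas the no-defect claim is the one that genuinely combines the uniform Morrey bound with the a priori smallness $\mathcal H^1(\Omega\setminus\mathcal O)=0$ from Theorem \ref{thm-2}. Two delicate points to verify there are that the weak-$*$ limit $\mu_2$ really has its singular part supported in $\Sigma$, and that the covering estimate is set up consistently with the definition of $\mathcal H^1$ (radii versus diameters, and the freedom to center the covering balls arbitrarily so that $\mu_2(B_{r_i}(x_i))\le Mr_i$ applies).
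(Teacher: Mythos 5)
Your proposal is correct, and its skeleton coincides with the paper's: both proofs hinge on (a) covering $\Sigma=\Omega\setminus\mathcal O$ by finitely many balls of total radius $\ve$, which together with the uniform Morrey bound controls the gradient mass near $\Sigma$ while the $C^2_{loc}(\mathcal O)$ convergence handles the rest, and (b) applying Theorem \ref{thm-3} to the limit $u$ to get a discrete singular set $\Sigma_u$, then using the small-energy regularity of Theorem \ref{thm-1} (fed by the $H^1_{loc}$ and $L^1_{loc}$ convergences) to show every point outside $\Sigma_u$ is a regular point of the sequence, so $\Sigma\subseteq\Sigma_u$. The only differences are in packaging. For step (a) the paper estimates $\int_A|\nabla u_k|^2dx\le 4M\ve$ directly on the union $A$ of enlarged balls and concludes strong convergence on $U$ by exhaustion, whereas you route the same covering estimate through the defect measure $\mu_2$, conclude $\mu_2=|\nabla u|^2dx$, and upgrade weak to strong convergence via norm convergence (Radon--Riesz); both mechanisms are valid and rest on the identical covering idea. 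A genuine value added on your side: you explicitly verify that the limit $u$ is stationary by passing to the limit in the identity \eqref{stationary} using the strong $L^2_{loc}$ convergence of gradients and $e^{u_k}\to e^u$ in $L^1_{loc}$ --- a point the paper merely asserts ("we first note that the limit function $u$ is a stationary solution"), even though Theorem \ref{thm-3} requires it; you likewise record that \eqref{Morrey-bound} passes to $u$ by lower semicontinuity, which the paper leaves implicit. One small loose end in your write-up: "$x_0\in\mathcal O$" should be argued as "$x_0\notin\Sigma_1\cup\Sigma_2$", which is immediate since the uniform upper bound on $u_k$ near $x_0$ forces $\mu_1(B_r(x_0))=O(r^3)$ and your earlier observation $\mu_2(B_r(x))\le Mr$ already gives $\Sigma_2=\emptyset$; the paper's own proof is equally terse at this point, so this is cosmetic rather than a gap.
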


Concerning global solutions we have the following theorem.

\begin{thm} \label{thm-4}  Let $u$ be a stationary solution to  $$-\D u=e^{u}\quad\text{in }\R^3.$$   Assume that       \begin{align}\label{Morrey-bound-2}  \frac1r\int_{B_r(x_0)} |\nabla u|^2dx\leq C_1\quad\text{for every } x_0\in \R^3, \, r\in(0,\infty). \end{align}   If $u$ is regular in $\R^3$ then    \begin{align*} |u(x)+2\log(1+|x|)|\leq C\quad\text{in }\R^3,  \end{align*}  and if $u$ is singular  then, up to a translation  \begin{equation*}
u(x)=-2\log|x|+\log2+2\vp(\frac{x}{|x|})\quad\text{in }\R^3,\end{equation*} for some $\vp$ satisfying \eqref{phiS}.   \end{thm}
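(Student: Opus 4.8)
The plan is to classify global solutions through a monotonicity formula extracted from the stationary identity \eqref{rela-stationary}, which becomes the main new tool. Writing $\Phi_{x_0}(r)=\frac1r\int_{B_r(x_0)}e^u$ and using $-\D u=e^u$ one has $\Phi_{x_0}(r)=-\frac1r\int_{\pa B_r(x_0)}\pa_r u$, hence $\bar u_{x_0}'(r)=-\Phi_{x_0}(r)/(4\pi r)$. Combining this with \eqref{rela-stationary} (equivalently, completing the square after adding $\frac4r\int_{B_r(x_0)}e^u$), I would show that the quantity
\begin{equation*}
\Theta(x_0,r):=\frac1r\int_{B_r(x_0)}\big(|\nabla u|^2-2e^u\big)\,dx+16\pi\,\bar u_{x_0}(r)+32\pi\log r
\end{equation*}
is monotone nondecreasing, with
\begin{equation*}
\frac{d}{dr}\Theta(x_0,r)=\frac2r\int_{\pa B_r(x_0)}\Big(\pa_r u+\frac2r\Big)^2 d\sigma\ \ge\ 0 .
\end{equation*}
The Morrey bound \eqref{Morrey-bound-2}, together with boundedness of $\Phi_{x_0}$ coming from Theorem \ref{thm-3}, guarantees that $\Theta(x_0,\cdot)$ stays bounded, so the limits $\Theta_0:=\lim_{r\to0}\Theta$ and $\Theta_\infty:=\lim_{r\to\infty}\Theta$ exist. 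Crucially $\Theta$ is covariant under the rescaling about $x_0$, $u\mapsto u_\lambda:=u(x_0+\lambda(\cdot-x_0))+2\log\lambda$, namely $\Theta_{u_\lambda}(x_0,r)=\Theta_u(x_0,\lambda r)$, and $\frac{d}{dr}\Theta\equiv0$ precisely when $\pa_r u=-2/r$, i.e. when $u$ is homogeneous of the form $-2\log|x-x_0|+\psi(\tfrac{x-x_0}{|x-x_0|})$. Substituting this ansatz into $-\D u=e^u$ shows that $\psi=\log2+2\vp$ with $\vp$ solving exactly \eqref{phiS}; this is where the constant $\log2$ and equation \eqref{phiS} enter.

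For the regular case I would run a blow-down: the rescalings $u_\lambda$, $\lambda\to\infty$, satisfy \eqref{eq-1} and the same bound \eqref{Morrey-bound-2}, so by the compactness alternative of Theorem \ref{thm-2} a subsequence either vanishes or converges in $C^2_{loc}$. Vanishing is excluded by monotonicity: at a regular point $\Theta(x_0,r)\to-\infty$ as $r\to0$ (the $32\pi\log r$ term dominates), while monotonicity forces $\Theta_\infty\ge\Theta(x_0,r_0)>-\infty$ for any fixed $r_0$, whereas a vanishing blow-down would instead drive $\Theta_\infty=-\infty$, a contradiction. Hence the blow-down converges to a nontrivial homogeneous solution $U_\infty=-2\log|x|+\log2+2\vp_\infty$. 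Reading the $C^2_{loc}(\R^3\setminus\{0\})$ convergence on $\{|x|=1\}$ gives $u(\lambda\omega)+2\log\lambda=u_\lambda(\omega)\to \log2+2\vp_\infty(\omega)$; precompactness of $\{u_\lambda\}_{\lambda\ge1}$ then yields $|u(x)+2\log|x||\le C$ for $|x|\ge1$, while boundedness of $u$ on the compact set $\{|x|\le1\}$ handles the rest, giving $|u(x)+2\log(1+|x|)|\le C$.

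For the singular case I would combine Theorem \ref{thm-3} with the same monotonicity. After translating the singular point to $0$, \eqref{homog} says the blow-ups $u_\lambda$, $\lambda\to0$, converge to a homogeneous $U_0=-2\log|x|+\log2+2\vp_0$, so $\Theta_0=\Theta(U_0)$; likewise the blow-down gives a homogeneous $U_\infty$ with $\Theta_\infty=\Theta(U_\infty)$. The theorem is then equivalent to the rigidity $\Theta_0=\Theta_\infty$: equality of the two limits of the monotone $\Theta$ forces $\frac{d}{dr}\Theta\equiv0$, i.e. $\pa_r u\equiv-2/r$, i.e. global homogeneity, which by the substitution above is exactly $u(x)=-2\log|x|+\log2+2\vp(x/|x|)$ with $\vp$ solving \eqref{phiS}. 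Passing to cylindrical variables $t=\log|x|$, $\omega=x/|x|$, $w(t,\omega)=u+2\log|x|$, the equation becomes $\pa_{tt}w+\pa_t w+\D_{S^2}w+e^w-2=0$ and $\Theta$ becomes the Hamiltonian $H(t)=\int_{S^2}\big(\tfrac12|\pa_t w|^2-\tfrac12|\nabla_\omega w|^2+e^w-2w\big)$ with $H'(t)=-\int_{S^2}|\pa_t w|^2$, so $\Theta_0=\Theta_\infty$ asserts the absence of a nonconstant ($t$-dependent) connecting orbit between two solutions of \eqref{phiS}.

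This last point is the main obstacle. One must (i) show that a global singular solution has a single singular point, so that $w$ is a genuinely global smooth solution on the whole cylinder $\R\times S^2$, and (ii) rule out heteroclinics for the damped equation above. I expect (i) to follow by applying the monotonicity centered at each putative singularity together with the $8\pi$ quantization in \eqref{int}, and (ii) to be the delicate part: since $H$ is \emph{not} constant along the non-compact (Möbius) family of solutions of \eqref{phiS}, equality of the limits cannot be read off from energy values alone and must instead be forced by the uniform bound \eqref{Morrey-bound-2}, which controls the asymptotic densities $\int_{S^2}|\nabla_\omega w|^2$ and $\int_{S^2}|\pa_t w|^2$ at both ends and, together with $\int_{\R}\!\int_{S^2}|\pa_t w|^2=H(-\infty)-H(+\infty)<\infty$, should preclude any nontrivial $t$-dependence.
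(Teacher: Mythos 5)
Your machinery is the paper's: your $\Theta(x_0,r)$ is exactly twice the quantity $\mathcal E(u,x_0,r)$ of \eqref{mono-energy}, your derivative identity is \eqref{mono-formula} (Proposition \ref{propo-mono}), and your treatment of the regular case (blow-down, exclusion of vanishing, homogeneity of the limit via constancy of the monotone quantity) is essentially the argument the paper runs through Theorem \ref{thm-2} --- though your ``precompactness of $\{u_\lambda\}_{\lambda\ge 1}$'' step silently needs the paper's ray argument (a homogeneous limit that is singular at a point of $\pa B_1$ has a whole singular ray, contradicting $\mathcal H^1(\Sigma)=0$) to rule out concentration points on the unit sphere. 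The genuine gap is in the singular case: you correctly reduce the theorem to the rigidity $\Theta_0=\Theta_\infty$, but then you declare this ``the delicate part'' and leave it unproven, offering only the expectation that \eqref{Morrey-bound-2} ``should preclude any nontrivial $t$-dependence''. Since the theorem is equivalent to that statement, the proposal is incomplete exactly at its crux.

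Moreover, your diagnosis of why this step is hard rests on a false premise. You assert that the Hamiltonian $H$ is \emph{not} constant along the M\"obius family of solutions of \eqref{phiS}; in fact it is constant, and this is precisely the paper's key observation (Lemma \ref{lem-energy-origin}). Integrating \eqref{phiS} over $S^2$ gives $\int_{S^2}e^{2\vp}\,d\sigma=4\pi$ for every solution, and since every solution of \eqref{phiS} is an extremal of Onofri's inequality, one gets
\begin{equation*}
\int_{S^2}|\nabla_{S^2}\vp|^2\,d\sigma+2\int_{S^2}\vp\,d\sigma=0 .
\end{equation*}
Consequently every homogeneous singular solution $-2\log|x|+\log 2+2\vp(x/|x|)$ has one and the same energy, $\mathcal E\equiv 8\pi\log 2$ (in your normalizations, $\Theta\equiv 16\pi\log 2$ and $H\equiv 8\pi(1-\log 2)$ on the entire family). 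Since by \eqref{homog} the blow-ups at the singular point converge to such a homogeneous solution, and the blow-downs do as well (here the scale-invariant bound \eqref{Morrey-bound-2} prevents gradient concentration, as in Corollary \ref{cor-1}, so energies pass to the limit), both limits of the monotone quantity equal $8\pi\log 2$; then \eqref{mono-formula} forces $\pa_\nu u\equiv-2/r$, i.e.\ $u$ is homogeneous. The heteroclinic you were worried about is thus excluded for free, with no analysis of connecting orbits; and your step (i) --- uniqueness of the singular point --- is a consequence of homogeneity, not a prerequisite for the argument.
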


Let us mention here that the conclusions of Theorems \ref{thm-3} and \ref{thm-4} were obtained by Veron-Veron \cite{Veron} under a  different hypothesis on $u$, namely (simply $\Omega=B_1$ or $\R^3$) \begin{align}\label{hypV} u(x)\leq -2\log|x|+C\quad\text{in }\Omega\setminus\{0\}. \end{align} Notice that the  assumption \eqref{hypV} on $ u$ would imply \eqref{Morrey-bound}, see e.g. Lemma \ref{lem-useful-1}.  Regarding  the existence of singular solutions, R\'eba\"i \cite{Rebai}  proved existence of solutions to \eqref{eq-1} with finitely many singularities, and  by constructions,  they satisfy \ref{beha-singu}. 

Concerning the regularity and partial regularity of stable solutions to semilinear equations with exponential nonlinearity we refer the reader for instance to the paper \cite{W} and the references therein.   

\par
We conclude the introduction by mentioning that the main questions that still remain open are the extension of the partial regularity result obtained in \cite{DaLio} and the blow-up analysis of the current paper for the equation \eqref{eq-1} in dimension $n>3$ and the rectifiability of the concentration set for a sequence of stationary  solutions to \eqref{eq-1} without the ad-hoc monotonicity assumption \eqref{montass}.

 \section{Small energy regularity and proof of Theorem \ref{thm-1}  }   \label{section-2}
 In this section we prove Theorem \ref{thm-1} which is a refined version of Theorem \ref{thm-DaLio}.\par
 
  Though the proof of Theorem \ref{thm-1} is very similar to that of Theorem \ref{thm-DaLio},   for convenience we will give a sketch of it. The main difference in our arguments is that we avoid the use of the average quantity $\bar u _{x_0}(r)$.   Some of the   estimates contained in this section will be used in later sections as well.

We start with  the following  energy decay estimate   which is the main ingredient for the proof of our main results.  Before stating the theorem let us set   
\begin{equation}\label{eta1}
\eta_1:=\frac{1}{2}\int_{B_{2}(x_0)}e^u dx,\quad \eta_2:=\frac{1}{2}\int_{B_{2}(x_0)}|\nabla u|^2 dx.
\end{equation}
We may assume without restriction that $x_0=0$.
 \begin{thm} \label{thm-energy-decay}For a given $\gamma\in(0,1)$ there exists $\bar\eta_1\in(0,1)$  and $0<\underline \rho<\bar \rho\leq\frac12$ such that if  $$\eta_1:=\frac{1}{2}\int_{B_{2}(0)}e^u dx\leq\bar\eta_1,$$ then there exists $\rho\in[\underline\rho,\bar\rho]$   for which we have   $$E_{0, \rho}(u)\leq\gamma\, E_{0,2}(u). $$  \end{thm}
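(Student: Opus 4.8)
The plan is to prove an energy-decay estimate for the scale-invariant energy $E_{0,r}(u)$ on a small ball, given that the $L^1$-mass of $e^u$ on $B_2$ is small. Here is how I would carry it out.

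\medskip

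\noindent\textbf{Step 1: Split the energy and set up a dyadic/annular argument.} First I would observe that $E_{0,\rho}(u)=\frac{1}{\rho}\int_{B_\rho}|\nabla u|^2+\frac{1}{\rho}\int_{B_\rho}e^u$, and that the second term is directly controlled by the smallness hypothesis: since $\int_{B_2}e^u\le 2\bar\eta_1$, for any fixed $\rho\in[\underline\rho,\bar\rho]$ the exponential part of $E_{0,\rho}(u)$ is at most $\frac{2\bar\eta_1}{\underline\rho}$, which can be made an arbitrarily small fraction of $E_{0,2}(u)$ provided $E_{0,2}(u)$ is itself not too small (and if $E_{0,2}(u)$ \emph{is} small the statement is nearly vacuous after adjusting constants). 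The real content is therefore to control the Dirichlet part $\frac{1}{\rho}\int_{B_\rho}|\nabla u|^2$.

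\medskip

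\noindent\textbf{Step 2: Use the stationarity identity \eqref{rela-stationary} to extract good radii.} The monotonicity-type formula \eqref{rela-stationary} controls the derivative of $\frac{1}{r}\int_{B_r}(|\nabla u|^2-6e^u)$ in terms of boundary integrals of $|\partial_r u|^2$ and $e^u$. Integrating this identity from $\underline\rho$ to $\bar\rho$ and using the smallness of $\int e^u$, I would show that $\int_{\underline\rho}^{\bar\rho}\big(\frac{1}{r}\int_{\partial B_r}|\partial_r u|^2\big)\,dr$ is small relative to the total energy, so that by a mean-value (pigeonhole) argument there exists a good radius $\rho\in[\underline\rho,\bar\rho]$ on whose sphere the radial-derivative energy $\frac{1}{\rho}\int_{\partial B_\rho}|\partial_r u|^2$ is suitably small. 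On such a sphere one also gains control of the full gradient via a Poincaré/Wente-type estimate on the $2$-sphere, reducing the tangential part of $|\nabla u|$ to the oscillation of $u$ on $\partial B_\rho$, which in turn is governed by the small exponential mass.

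\medskip

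\noindent\textbf{Step 3: Compare the interior Dirichlet energy to the boundary data.} With a good sphere $\partial B_\rho$ in hand, I would compare $u$ on $B_\rho$ to its harmonic replacement $h$ with the same boundary data. The difference $u-h$ solves $-\Delta(u-h)=e^u$ with zero boundary values, so its energy is controlled by $\|e^u\|_{L^1}$ (using that in $3$-D the Green's function gives $\int|\nabla(u-h)|^2\lesssim \|e^u\|_{L^1}^2$ by duality / the $L^1$–$L^{3,\infty}$ Sobolev embedding), hence small. The harmonic part $h$ satisfies a clean decay of its scale-invariant Dirichlet energy because harmonic functions in $3$-D have $\frac{1}{\rho'}\int_{B_{\rho'}}|\nabla h|^2\lesssim (\rho'/\rho)\frac{1}{\rho}\int_{B_\rho}|\nabla h|^2$ for $\rho'<\rho$. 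Choosing $\bar\rho,\underline\rho$ so that this geometric decay factor beats the prescribed $\gamma$, and absorbing the small error from $u-h$, yields $E_{0,\rho'}(u)\le\gamma E_{0,2}(u)$ for an appropriate radius.

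\medskip

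\noindent\textbf{Main obstacle.} I expect the delicate point to be Step 3, specifically controlling $\int_{B_\rho}|\nabla(u-h)|^2$ by the small $L^1$-norm of $e^u$ without any Morrey or $L^p$ bound on the nonlinearity: the right-hand side $e^u$ is only in $L^1$, so one cannot use standard $L^p$ elliptic estimates, and the $3$-D scaling is exactly critical for $L^1$ data. The resolution must use the duality between the critical Sobolev space and the Lorentz space $L^{3,\infty}$ (or equivalently a Green's-function estimate tailored to $L^1$ source terms), which is precisely the ingredient that makes the argument work in the absence of the Morrey hypothesis \eqref{unifbound} and is, I believe, the heart of the improvement over \cite{DaLio}.
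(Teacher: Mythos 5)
Your proposal contains genuine gaps, and the most serious one is the estimate you yourself flag as the ``main obstacle'' in Step 3: the claim $\int_{B_\rho}|\nabla(u-h)|^2\lesssim \|e^u\|_{L^1}^2$ is false in dimension $3$. Writing $z=u-h$, one has $\int|\nabla z|^2=\int z\,e^u \simeq \iint \frac{e^{u(x)}e^{u(y)}}{|x-y|}\,dx\,dy$ via the Green function, and this diverges as the data concentrates (an approximate delta of fixed $L^1$-mass has Newtonian potential $\sim c/|x|\notin H^1$). The duality you invoke cannot rescue this: $L^1$ does not pair with $H^1$ in $3$-D (since $H^1\not\hookrightarrow L^\infty$), and convolving $L^1$ data with $\nabla G\in L^{3/2,\infty}$ only yields $\nabla z\in L^{3/2,\infty}$, which misses $L^2$. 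This is precisely why the paper never estimates the \emph{interior} Dirichlet energy of the Newtonian part $v$ directly from $\|e^u\|_{L^1}$. Instead it (a) uses the stationarity (Pohozaev) identity \eqref{stationary-identity} at radius $\rho$ to convert $\frac1\rho\int_{B_\rho}|\nabla u|^2$ into \emph{boundary} integrals over $\partial B_\rho$ with favorable signs, and (b) bounds $\int_{\partial B_\rho}|\nabla v|^2d\sigma$ on well-chosen slices by the Lorentz duality $\|\nabla v\|_{L^2(\partial B_\rho)}^2\le\|\nabla v\|_{L^{2,1}(\partial B_\rho)}\|\nabla v\|_{L^{2,\infty}(\partial B_\rho)}$, where the $L^{2,1}$ bound comes from $W^{1,1}(\partial B_\rho)\hookrightarrow L^{2,1}(\partial B_\rho)$ together with an $L\log L$ estimate on $\nabla^2 v$ (which itself needs the test-function inequality of Lemma \ref{lem-2.5}), and the $L^{2,\infty}$ bound comes from the Green kernel plus a Fubini-type choice of good radii (Proposition \ref{propo-grad-v}). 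The point you are missing is dimensional: the spheres $\partial B_\rho$ are two-dimensional, where these borderline estimates hold for $L^1$ data; no $3$-D interior $L^2$ elliptic estimate is ever used.

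Step 1 also has a gap. The bound $\frac1\rho\int_{B_\rho}e^u\le \frac{2\bar\eta_1}{\underline\rho}$ is absolute, whereas the theorem demands the \emph{relative} bound $\gamma E_{0,2}(u)$; since $E_{0,2}(u)$ can itself be of order $\eta_1$, your bound is then a large multiple ($2/\underline\rho$) of $E_{0,2}(u)$, and the statement is never ``nearly vacuous'' -- it is homogeneous in the energy, so there are no constants to adjust. The paper needs Proposition \ref{propo-e^u} exactly for this: splitting $B_\rho$ by the level sets $\{v\le\sqrt{\eta_1}\}$, $\{\sqrt{\eta_1}<v\le\lambda\}$, $\{v>\lambda\}$ and using $\int v e^u\le\int|\nabla v|^2\le\int|\nabla u|^2$ on the last set makes $\frac1\rho\int_{B_\rho}e^u$ a genuinely small multiple of the \emph{full} energy $E_{0,2}(u)$, at the price of the term $\frac{1}{\rho\lambda}\int|\nabla u|^2$, killed by taking $\lambda$ large. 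Finally, in Step 2, integrating \eqref{rela-stationary} and pigeonholing only produces a radius where $\frac1\rho\int_{\partial B_\rho}|\partial_r u|^2d\sigma\le C\,E_{0,2}(u)$ with $C$ bounded but \emph{not} small (the energy difference across $[\underline\rho,\bar\rho]$ is of order $E_{0,2}(u)$, not $o(E_{0,2}(u))$), and your proposed control of the tangential gradient is backwards: Poincar\'e bounds oscillation by the gradient, not the gradient by the oscillation, and smallness of $\int e^u$ gives no control whatsoever on the oscillation of $u$ where $u$ is very negative.
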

 
\par
Before proving Theorem \ref{thm-energy-decay} 
we show some preliminary results.
We split $u$ in $B_1$ as $u=v+w$ where \begin{align}\label{decompose} \left\{\begin{array}{ll} -\D v=e^u&\quad\text{in }B_1\\ v=0&\quad\text{on }\pa B_1, \end{array}\right. \quad  \left\{\begin{array}{ll} \D w=0 &\quad\text{in }B_1\\ w=u &\quad\text{on }\pa B_1 .\end{array}\right.  \end{align} 
As $w$ is harmonic, we have that the functions $e^w$ and $|\nabla w|^2$ are subharmonic. 

Hence, we have the following lemma (see also the proof of Lemma 2.1 in \cite{DaLio}):
 \begin{lem} \label{lem-w}The  functions $$\rho\mapsto \frac{1}{|B_\rho|}\int_{B_\rho}e^w dx, \quad \rho\mapsto \frac{1}{|B_\rho|}\int_{B_\rho}|\nabla w|^2 dx\quad \text{and } \rho\mapsto \frac{1}{|\pa B_\rho|}\int_{\pa B_\rho}|\nabla w|^2d\sigma ,$$  are monotone increasing with respect to $\rho\in (0,1)$.  \end{lem}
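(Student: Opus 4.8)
The plan is to observe that all three quantities in the lemma are radial averages of a subharmonic function, and then to invoke (with proof) the classical monotonicity of such averages. As noted just before the statement, $w$ harmonic forces both $f=e^w$ and $f=|\nabla w|^2$ to be subharmonic: indeed $\D e^w=e^w(|\nabla w|^2+\D w)=e^w|\nabla w|^2\ge 0$, while the Bochner identity gives $\D|\nabla w|^2=2|\nabla^2 w|^2+2\nabla w\cdot\nabla(\D w)=2|\nabla^2 w|^2\ge 0$. Since $w$ is harmonic it is smooth in $B_1$, so $f$ is smooth on every sphere $\pa B_\rho$ with $\rho<1$ and all differentiations under the integral sign below are legitimate. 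The third quantity is literally the spherical average of $f=|\nabla w|^2$, so it suffices to prove two things: (a) the spherical average of a subharmonic $f$ is nondecreasing, and (b) the solid (ball) average of a subharmonic $f$ is nondecreasing.

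For (a) I would set $\phi(\rho):=\frac{1}{|\pa B_\rho|}\int_{\pa B_\rho}f\,d\sigma$ and rescale to the unit sphere, writing $\phi(\rho)=\frac{1}{|\pa B_1|}\int_{\pa B_1}f(\rho\omega)\,d\omega$. Differentiating in $\rho$ and undoing the rescaling gives $\phi'(\rho)=\frac{1}{|\pa B_\rho|}\int_{\pa B_\rho}\frac{\pa f}{\pa\nu}\,d\sigma=\frac{1}{|\pa B_\rho|}\int_{B_\rho}\D f\,dx\ge 0$, where the second equality is the divergence theorem and the inequality is subharmonicity. This already settles the third quantity of the lemma and supplies the monotonicity of $\phi$ that (b) will rely on.

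For (b), writing $n=3$ and using polar coordinates, the solid average can be expressed through $\phi$ as $\psi(\rho):=\frac{1}{|B_\rho|}\int_{B_\rho}f\,dx=\frac{n}{\rho^{n}}\int_0^\rho s^{n-1}\phi(s)\,ds$. Differentiating yields $\psi'(\rho)=\frac{n}{\rho^{n+1}}\big(\rho^{n}\phi(\rho)-n\int_0^\rho s^{n-1}\phi(s)\,ds\big)$, and since $\phi$ is nondecreasing we have $n\int_0^\rho s^{n-1}\phi(s)\,ds\le \phi(\rho)\,n\int_0^\rho s^{n-1}\,ds=\rho^{n}\phi(\rho)$, so $\psi'\ge 0$. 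Applying this with $f=e^w$ and with $f=|\nabla w|^2$ gives the monotonicity of the first two quantities.

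There is no serious obstacle here: the statement is a standard consequence of subharmonicity once the two sub-Laplacian computations are in place. The only point requiring a line of care is the justification of differentiation under the integral sign, which is immediate from interior smoothness of the harmonic function $w$ on every ball $B_\rho$ with $\rho<1$. One may alternatively quote the monotonicity of spherical and solid averages of subharmonic functions as classical (standard potential theory), in which case the proof reduces to the two subharmonicity verifications above.
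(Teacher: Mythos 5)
Your proposal is correct and follows essentially the same route as the paper, which likewise reduces the lemma to the subharmonicity of $e^w$ and $|\nabla w|^2$ (a consequence of $w$ being harmonic) together with the classical monotonicity of spherical and solid averages of subharmonic functions. The only difference is that you write out the standard computations ($\D e^w = e^w|\nabla w|^2 \ge 0$, Bochner, and the two averaging arguments) that the paper leaves implicit with a citation.
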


  From the maximum principle it follows that $v>0$ in $B_1$, and hence $w<u$ in $B_1$. Moreover,  \begin{align} \label{uvw} \int_{B_1}|\nabla u|^2dx=\int_{B_1}|\nabla v|^2dx+\int_{B_1}|\nabla w|^2dx  .\end{align}  \begin{lem} We have    \begin{align} \label{est-w-1}  e^{w(x)}\leq C \int_{B_1}e^{u(y) }dy\quad \text{for every }x\in B_\frac12,
 \end{align}  \begin{align} \label{est-w-2} \frac{1}{\rho }\int_{B_\rho}|\nabla w|^2dx\leq \rho^2 \int_{B_1}|\nabla w|^2dx \leq  \rho^2 \int_{B_1}|\nabla u|^2dx, \quad \rho\in (0,1) , \end{align} \begin{align}  \int_{\pa B_\rho}|\nabla w|^2d\sigma\leq 4\rho^2\int_{\pa B_\frac12}|\nabla w|^2d\sigma\leq C\rho^2\int_{B_1}|\nabla u|^2dx,\quad \rho\in (0,\frac12). \label{est-w-3}\end{align} \end{lem}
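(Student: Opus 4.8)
The plan is to derive all three estimates from the three ingredients already in place: the orthogonal decomposition \eqref{uvw}, the monotonicity of the averaged energies of $w$ in Lemma \ref{lem-w}, and the subharmonicity of $e^w$ and $|\nabla w|^2$ together with the maximum principle giving $v>0$, i.e.\ $w<u$, in $B_1$.

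First, for \eqref{est-w-1} I would use that $e^w$ is subharmonic. By the sub-mean value inequality, for any $x\in B_\frac12$ we have $e^{w(x)}\le|B_\frac12|^{-1}\int_{B_\frac12(x)}e^w\,dy$, and since $B_\frac12(x)\subset B_1$ this is bounded by $|B_\frac12|^{-1}\int_{B_1}e^w\,dy$. Because $w<u$ forces $e^w<e^u$, the right-hand side is at most $|B_\frac12|^{-1}\int_{B_1}e^u\,dy$, which is exactly \eqref{est-w-1} with $C=|B_\frac12|^{-1}$.

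Next, for \eqref{est-w-2} the second inequality is immediate from \eqref{uvw}, since $\int_{B_1}|\nabla u|^2=\int_{B_1}|\nabla v|^2+\int_{B_1}|\nabla w|^2\ge\int_{B_1}|\nabla w|^2$. For the first inequality I would recast it in terms of the volume average of Lemma \ref{lem-w}: using $|B_\rho|=\rho^3|B_1|$ in $\R^3$, the claim $\rho^{-1}\int_{B_\rho}|\nabla w|^2\le\rho^2\int_{B_1}|\nabla w|^2$ is equivalent to $|B_\rho|^{-1}\int_{B_\rho}|\nabla w|^2\le|B_1|^{-1}\int_{B_1}|\nabla w|^2$, which is precisely the monotonicity of $\rho\mapsto|B_\rho|^{-1}\int_{B_\rho}|\nabla w|^2$ read between $\rho$ and $1$; the only care needed is tracking the factor $\rho^3$ so the powers of $\rho$ balance.

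Finally, for \eqref{est-w-3} I would treat the two inequalities separately. The first follows from the monotonicity of the surface average $\rho\mapsto|\pa B_\rho|^{-1}\int_{\pa B_\rho}|\nabla w|^2\,d\sigma$ in Lemma \ref{lem-w}: with $|\pa B_\rho|=4\pi\rho^2$, comparing $\rho<\frac12$ with $\frac12$ gives $(4\pi\rho^2)^{-1}\int_{\pa B_\rho}|\nabla w|^2\le\pi^{-1}\int_{\pa B_\frac12}|\nabla w|^2$, and multiplying by $4\pi\rho^2$ produces the factor $4\rho^2$. For the second inequality I must dominate the boundary energy at the fixed radius $\frac12$ by the full Dirichlet energy; here I would integrate the same surface monotonicity over $r\in(\frac12,1)$, using $\int_{\frac12}^1\int_{\pa B_r}|\nabla w|^2\,d\sigma\,dr=\int_{B_1\setminus B_\frac12}|\nabla w|^2\le\int_{B_1}|\nabla w|^2$ to absorb the boundary term, or equivalently invoke the subharmonicity of $|\nabla w|^2$ and its sub-mean value property over balls $B_\frac14(x)\subset B_1$ with $x\in\overline{B_\frac12}$. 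Either route yields $\int_{\pa B_\frac12}|\nabla w|^2\le C\int_{B_1}|\nabla w|^2$, after which \eqref{uvw} replaces $|\nabla w|^2$ by $|\nabla u|^2$. Since every estimate is an elementary consequence of the stated tools, there is no single hard step; the only point demanding attention is the bookkeeping of the dimensional constants $|B_\rho|=\rho^3|B_1|$ and $|\pa B_\rho|=4\pi\rho^2$ needed to produce the exact powers of $\rho$ and the explicit constant $4$ in \eqref{est-w-3}, together with the one genuinely new (still routine) step of trading a boundary integral at a fixed radius for interior energy.
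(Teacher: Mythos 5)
Your proposal is correct and follows essentially the same route as the paper: all three estimates are deduced from the monotone averages of Lemma \ref{lem-w} (equivalently, the sub-mean value property of the subharmonic functions $e^w$ and $|\nabla w|^2$), the pointwise comparison $w<u$, and the orthogonality \eqref{uvw}, including the same trick of integrating the surface-average monotonicity over $r\in(\tfrac12,1)$ to trade the boundary integral at radius $\tfrac12$ for the interior Dirichlet energy.
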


 \begin{proof} {\bf 1.} To prove \eqref{est-w-1} we  use the monotonicity of $$\rho\mapsto\frac{1}{|B_\rho|}\int_{B_\rho(x)} e^{w(y)}dy,$$ thanks to Lemma \ref{lem-w}, and  the fact that $w<u$ in $B_1$. Indeed, taking $\rho\downarrow 0$ we get that $$e^{w(x)}\leq\frac{1}{B_\frac14(x)}\int_{B_\frac14(x)}e^{w(y)}dy\leq\frac{1}{B_\frac14(x)}\int_{B_\frac14(x)}e^{u(y)}dy\leq C\int_{B_1} e^{u(y)}dy,$$ for every $x\in B_\frac 14$.

{\bf 2.} To prove \eqref{est-w-3} we  use Lemma \ref{lem-w}, \eqref{uvw} and the monotonicity of 
$\rho\mapsto \frac{1}{|\pa B_\rho|}\int_{\pa B_\rho}|\nabla w|^2d\sigma$.

 Indeed, for $\rho\in(0,\frac12)$ we have \begin{align*} \int_{\pa B_\rho}|\nabla w|^2d\sigma &\leq |\pa B_\rho| \frac{1}{|\pa B_\frac12|}\int_{\pa B_\frac12}|\nabla w|^2d\sigma \\ & \leq|\pa B_\rho| 2\int_\frac12^1\frac{1}{|\pa B_t|}\int_{\pa B_t}|\nabla w|^2d\sigma dt \\&\leq C\rho^2\int_{B_1}|\nabla w|^2dx\\ &\leq C\rho^2\int_{B_1}|\nabla u|^2dx. \end{align*}

The proof of \eqref{est-w-2} follows in a similar way. 
\end{proof}

   From the Green's representation for $v$, we see that ($C$ is a dimensional constant) 
   
   \begin{equation*}
   v(x)\leq C\int_{B_1}\frac{1}{|x-y|} e^{u(y)}dy.
   \end{equation*}
    Therefore, using the  Minikowski's integral inequality $$\left(\int_{B_1} \left|\int_{ B_1} f(x,y) dy\right|^pdx\right)^\frac1p\leq \int_{B_1} \left(\int_{B_1} |f(x,y)|^pdx\right)^\frac1pdy,\quad 1\leq p<\infty,$$ one can get \begin{align}  \label{est-v-L2} \left(\int_{B_1} v^2dx\right)^\frac12\leq C \int_{B_1}e^u dx. \end{align} Consequently, by Chebyshev's inequality and by \eqref{eta1} we get 
    
    \begin{equation}\label{est-v-measure}  |\{x\in B_1: v(x)\geq\sqrt{\eta_1}\}|\leq C\eta_1. \end{equation}
    
    Combining the above estimates  we prove:  
    
  \begin{prop}\label{propo-e^u}
   For every $\rho\in (0,\frac12]$, $0<\eta_1<1$ and  $\lambda\geq 1$, we have  \begin{align*}\frac{1}{\rho }\int_{B_\rho}e^udx\leq \left( \frac{C e^\lambda \eta_1}{\rho} + C\rho^2     \right) \int_{B_1}e^udx +\frac{1}{\rho\lambda}  \int_{B_1}|\nabla u|^2dx.\end{align*} 
   \end{prop}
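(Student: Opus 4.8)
The plan is to exploit the splitting $u=v+w$ from \eqref{decompose} together with the fact that $v\ge 0$ (maximum principle) and the pointwise bound \eqref{est-w-1}, which on $B_\rho\subset B_{1/2}$ reads $e^{w}\le M$ with $M:=C\int_{B_1}e^u\,dx$. Writing $e^u=e^ve^w\le Me^v$ on $B_\rho$, everything reduces to controlling $\int_{B_\rho}e^v$, and I would split $B_\rho$ according to the size of $v$ using the two thresholds $\sqrt{\eta_1}$ and $\lambda$ (note $\sqrt{\eta_1}<1\le\lambda$ since $\eta_1<1$). Thus
\[
\int_{B_\rho}e^u\,dx=\int_{B_\rho\cap\{v\le\sqrt{\eta_1}\}}e^u\,dx+\int_{B_\rho\cap\{\sqrt{\eta_1}<v\le\lambda\}}e^u\,dx+\int_{B_\rho\cap\{v>\lambda\}}e^u\,dx,
\]
and I would estimate the three pieces separately.

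On the low region $\{v\le\sqrt{\eta_1}\}$ I use $e^v\le e^{\sqrt{\eta_1}}\le e$ together with $e^w\le M$ and the volume bound $|B_\rho|\le C\rho^3$, which gives a contribution $\le C\rho^3\int_{B_1}e^u$. On the intermediate region $\{\sqrt{\eta_1}<v\le\lambda\}$ I use $e^v\le e^\lambda$ and $e^w\le M$ pointwise, while the measure of this set is controlled by \eqref{est-v-measure}, namely $|\{v>\sqrt{\eta_1}\}\cap B_1|\le C\eta_1$; this yields a contribution $\le Ce^\lambda\eta_1\int_{B_1}e^u$. These two pieces already produce the term $(Ce^\lambda\eta_1/\rho+C\rho^2)\int_{B_1}e^u$ after dividing by $\rho$.

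The only delicate part is the high region $\{v>\lambda\}$, where $e^v$ is no longer bounded and the naive volume estimate fails; this is where the gradient term must appear. Here I would \emph{not} use the factorization $e^u\le Me^v$ but instead return to the equation $-\D v=e^u$. Introducing the nonincreasing distribution function $g(s):=\int_{\{v>s\}}e^u\,dx$ and testing $-\D v=e^u$ with $v\in H^1_0(B_1)$, a layer-cake/Fubini computation gives the identity $\int_{B_1}|\nabla v|^2\,dx=\int_{B_1}e^u\,v\,dx=\int_0^\infty g(s)\,ds$. Since $g$ is nonincreasing, $\lambda\,g(\lambda)\le\int_0^\lambda g(s)\,ds\le\int_0^\infty g(s)\,ds=\int_{B_1}|\nabla v|^2\,dx\le\int_{B_1}|\nabla u|^2\,dx$, the last inequality being \eqref{uvw}; hence $\int_{B_\rho\cap\{v>\lambda\}}e^u\le g(\lambda)\le\frac1\lambda\int_{B_1}|\nabla u|^2$. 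Summing the three contributions and dividing by $\rho$ gives exactly the claimed inequality.

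I expect the high region to be the main obstacle: it is the step where one must trade the uncontrolled exponential integrand for the Dirichlet energy, and the clean way to do this is the super-level-set identity above (equivalently, a coarea argument giving $\int_{\{v>\lambda\}}e^u=\int_{\{v=\lambda\}}|\nabla v|\,d\sigma$ for a.e.\ $\lambda$, combined with the monotonicity of $g$). Everything else is bookkeeping with the already-established estimates \eqref{est-w-1}, \eqref{est-v-measure} and \eqref{uvw}.
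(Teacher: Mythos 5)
Your proof is correct and follows essentially the same route as the paper: the identical three-region splitting of $B_\rho$ at the levels $\sqrt{\eta_1}$ and $\lambda$, with \eqref{est-w-1}, \eqref{est-v-measure} and \eqref{uvw} used in exactly the same way. The only cosmetic difference is in the high region, where the paper simply uses $e^u\le \frac{v}{\lambda}e^u$ on $\{v\geq\lambda\}$ and tests the equation with $v$ to get $\int_{B_1}v e^u\,dx=\int_{B_1}|\nabla v|^2\,dx$; your layer-cake identity for $g(s)$ is the same Chebyshev-type estimate in disguise.
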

   
   \begin{proof}  Let $\rho\in (0,\frac12]$. For  $\lambda\geq 1$ we split $B_\rho$ into 
   \begin{equation*}A_1:=\{x\in B_\rho: v(x)\leq \sqrt{\eta_1}\}, \quad A_2:=\{x\in B_\rho: \sqrt{\eta_1}<v\leq\lambda\},\quad A_3:=\{x\in B_\rho: v\geq\lambda\}. 
    \end{equation*}
    By \eqref{est-w-1} we get that 
   \begin{equation*}\frac{1}{\rho}\int_{A_1} e^u dx\leq e^{\sqrt{\eta_1}} \frac1\rho \int_{B_\rho}e^wdx\leq e \rho^2 \int_{B_1}e^u dx. \end{equation*}
   Again by \eqref{est-w-1}, and together with \eqref{est-v-measure}
    \begin{align*} \frac{1}{\rho}\int_{A_2}e^udx\leq \frac1\rho \left( \sup_{B_\rho}e^w  \right) e^\lambda |A_2|\leq \frac{C e^{\lambda} \eta_1}{\rho} \int_{B_1}e^{u}dx.\end{align*} 
    Finally, 
    \begin{align*}   \frac1\rho \int_{A_3} e^udx\leq \frac{1}{\rho\lambda}\int_{B_\rho} v e^udx\leq \frac{1}{\rho\lambda}\int_{B_1} v (-\D v)dx= \frac{1}{\rho\lambda}\int_{B_1} |\nabla v|^2dx\leq   \frac{1}{\rho\lambda}\int_{B_1} |\nabla u|^2dx, \end{align*} thanks to \eqref{uvw}. 
   
   We can conclude the proof of the proposition \ref{propo-e^u}. 
   \end{proof}

 
 \begin{lem}\label{lem-2.5} Setting $$u^+:=\max\{u,0\},\quad u^-:=\min\{u,0\},$$ 
 we have
\begin{eqnarray}
\int_{B_1}|\nabla u^+|^2dx &\leq& 2 \int_{B_{2}}u^+e^u +8\int_{B_2}e^udx \label{uplus}\\
 \int_{B_1}|\nabla u^-|^2dx&\leq & 4\int_{B_{2}}(u^-)^2dx\label{uminus}\\
 \int_{B_1}u^+e^u dx&\leq & 2 \int_{B_{2}} |\nabla u^+|^2dx +2\int_{B_2}e^udx.\label{uplusexp}
 \end{eqnarray}
  \end{lem}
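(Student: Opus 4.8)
The plan is to prove all three estimates by testing the weak formulation $\int\nabla u\cdot\nabla\varphi=\int e^u\varphi$ of $-\D u=e^u$ against $\varphi=u^{\pm}\xi^2$, where $\xi$ is a Lipschitz cutoff. Concretely I would take $\xi(x)=1$ for $|x|\le 1$, $\xi(x)=2-|x|$ for $1\le|x|\le 2$ and $\xi(x)=0$ for $|x|\ge 2$, so that $0\le\xi\le 1$, $\xi\equiv 1$ on $B_1$, $\mathrm{supp}\,\xi\subset B_2$ and $|\nabla\xi|\le 1$ a.e.; the bound $|\nabla\xi|\le1$ is precisely what produces the sharp constants $8$, $4$ and $2$ in the statement (a smooth mollification of $\xi$ changes nothing). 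The two elementary facts I would use repeatedly are $\nabla u\cdot\nabla u^{\pm}=|\nabla u^{\pm}|^2$ a.e.\ (since $\nabla u^{+}=\mathbf{1}_{\{u>0\}}\nabla u$ and $\nabla u^{-}=\mathbf{1}_{\{u<0\}}\nabla u$), and the scalar inequality $t^2\le 2e^{t}$ for $t\ge0$, which gives $(u^{+})^2\le 2e^{u}$ pointwise and lets me convert $(u^+)^2$-terms into $e^u$-terms.

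For \eqref{uplus} I would expand $\nabla(u^{+}\xi^2)=\xi^2\nabla u^{+}+2\xi u^{+}\nabla\xi$ and use the first fact to obtain $\int\xi^2|\nabla u^{+}|^2+2\int\xi u^{+}\nabla u^{+}\cdot\nabla\xi=\int e^u u^{+}\xi^2$. Applying Young's inequality to the cross term in the form $|2\xi u^+\nabla u^+\cdot\nabla\xi|\le\tfrac12\xi^2|\nabla u^+|^2+2(u^+)^2|\nabla\xi|^2$ and absorbing $\tfrac12\int\xi^2|\nabla u^+|^2$ into the left side leaves $\tfrac12\int\xi^2|\nabla u^{+}|^2\le\int_{B_2}u^+e^u+2\int(u^+)^2|\nabla\xi|^2$; bounding $(u^+)^2\le 2e^u$ and $|\nabla\xi|\le1$ then yields \eqref{uplus}. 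Estimate \eqref{uplusexp} is the same identity read in the opposite direction: rather than absorbing, I bound the cross term above by $\int\xi^2|\nabla u^+|^2+\int(u^+)^2|\nabla\xi|^2$, giving $\int e^u u^+\xi^2\le 2\int\xi^2|\nabla u^+|^2+\int(u^+)^2|\nabla\xi|^2$, and again use $(u^+)^2\le 2e^u$; since $\xi\equiv1$ on $B_1$ the left side dominates $\int_{B_1}u^+e^u$, which is \eqref{uplusexp}.

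For \eqref{uminus} the test function is $\varphi=u^{-}\xi^2$, and the crucial sign observation is that $u^{-}\le0$ while $e^u>0$, so the right-hand side $\int e^u u^{-}\xi^2$ is $\le 0$ and can simply be discarded. The same expansion and Young's inequality then give $\tfrac12\int\xi^2|\nabla u^{-}|^2\le 2\int(u^{-})^2|\nabla\xi|^2$, i.e.\ \eqref{uminus}; note that here no exponential bound is needed, since the surviving term is already $(u^-)^2$.

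The one genuinely delicate point I anticipate is the \emph{admissibility} of the test functions: $u$ is only assumed to lie in $H^1$ with $e^u\in L^1$, so $u^{+}\xi^2$ need not be bounded and $u^{+}e^u$ need not a priori be integrable. I would therefore first test against the truncations $\min\{u^{+},k\}\,\xi^2$ (and $\max\{u^{-},-k\}\,\xi^2$), which are bounded and hence admissible, derive the estimates with $\min\{u^+,k\}$ in place of $u^+$, and then let $k\to\infty$ by monotone convergence; this limiting argument simultaneously establishes the finiteness of $\int_{B_1}u^+e^u$ that is implicit in \eqref{uplusexp}. Apart from this approximation step, the whole computation is a standard Caccioppoli-type argument.
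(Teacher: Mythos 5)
Your proof is correct and takes essentially the same route as the paper: the identical piecewise-linear cutoff, testing the weak formulation with $u^{\pm}\vp^2$, Young's inequality on the cross term with the same splitting, and the pointwise bound $t^2\le 2e^t$ to convert $(u^+)^2$ into $e^u$. The only difference is your truncation argument ($\min\{u^+,k\}\vp^2$ plus monotone convergence) to justify the admissibility of the unbounded test function, a point the paper passes over silently; this is a refinement of the same proof, not a different approach.
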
 
 
 \begin{proof} Set \begin{align*}\vp(x)=\left\{\begin{array}{ll}  1&\quad\text{for }|x|\leq 1\\ 2-|x| &\quad\text{for }1\leq|x|\leq 2.\end{array}\right.  \end{align*} Then $u\vp\in H^1_0(B_2) $. Taking $u^+\vp^2$ as a test function in  the weak formulation of \eqref{eq-1}, we obtain  $$\int_{B_2}\nabla u\cdot\nabla (u^+\vp^2)dx=\int_{B_2}u^+\vp^2e^udx.$$ The left hand side can be estimated  from below by   \begin{align*}&\int_{B_2} |\nabla u^+|^2\vp^2dx+2\int_{B_2}(\frac{1}{\sqrt 2}\vp\nabla u^+)\cdot(\sqrt 2u^+\nabla\vp)dx \\ &\geq \frac12\int_{B_2} |\nabla u^+|^2\vp^2dx-2\int_{B_2}(u^+)^2|\nabla \vp|^2dx.\end{align*}  Since $|\nabla \vp|\leq1$, we obtain $$\int_{B_1}|\nabla u^+|^2dx\leq 2\int_{B_2}u^+e^udx+4\int_{B_2}(u^+)^2dx\leq 2\int_{B_2}u^+e^udx+8\int_{B_2}e^udx,$$ where the last inequality follows from $t^2\leq 2e^t$ for $t>0$. This proves \eqref{uplus}. 
 
 In a similar way one gets \eqref{uminus} and \eqref{uplusexp}. 
 \end{proof}
 
 As a consequence of \eqref{uplusexp} of the above lemma we obtain     \begin{lem}\label{entropy}We have  \begin{align}\label{12} \int_{B_1}|\nabla^2 v|dx &\leq C\int_{B_1}|\D v|\log(2+|\D v|)dx\notag\\ &\leq C\left(\int_{B_2}e^u dx+\int_{B_2}|\nabla u|^2dx\right).\end{align} 
   \end{lem}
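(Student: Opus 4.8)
The plan is to prove the two inequalities separately. The second one is where the hypothesis \eqref{uplusexp} enters and is essentially elementary, while the first is the classical $L\log L\to L^1$ estimate for the Hessian of the Newtonian potential; the latter is the only genuinely delicate point.

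For the second inequality I would use that $-\D v=e^u$, so that $|\D v|=e^u$ pointwise in $B_1$. First I record the elementary pointwise bound $2+e^u\le 3\,e^{u^+}$, which holds for every real value (checking separately the cases $u\le 0$ and $u>0$), and which yields $\log(2+e^u)\le\log 3+u^+$. Multiplying by $e^u$ and integrating over $B_1$ gives
\[
\int_{B_1}|\D v|\log(2+|\D v|)\,dx\le \log 3\int_{B_1}e^u\,dx+\int_{B_1}u^+e^u\,dx.
\]
The first term is bounded by $\int_{B_2}e^u\,dx$. For the second term I would invoke \eqref{uplusexp}, namely $\int_{B_1}u^+e^u\,dx\le 2\int_{B_2}|\nabla u^+|^2\,dx+2\int_{B_2}e^u\,dx$, and then use $|\nabla u^+|\le|\nabla u|$ a.e.\ to replace $\int_{B_2}|\nabla u^+|^2\,dx$ by $\int_{B_2}|\nabla u|^2\,dx$. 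Collecting terms produces the asserted bound by $C\big(\int_{B_2}e^u\,dx+\int_{B_2}|\nabla u|^2\,dx\big)$.

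For the first inequality I would start from the Green representation $v(x)=\int_{B_1}G(x,y)e^{u(y)}\,dy$, with $G$ the Dirichlet Green function of $B_1$, so that $\nabla^2 v=T(e^u)$ where $T$ has kernel $\nabla_x^2 G(x,y)$. The leading part of this kernel is the Hessian of the fundamental solution $\tfrac{1}{4\pi|x-y|}$, which is of size $|x-y|^{-3}$: this is the standard Calder\'on--Zygmund kernel associated with the double Riesz transforms, since $\pa_i\pa_j(-\D)^{-1}=-R_iR_j$. As such an operator is bounded on $L^2$ and of weak type $(1,1)$, the classical theorem of Stein on the class $L\log L$ (equivalently, a Calder\'on--Zygmund decomposition combined with a layer-cake estimate) gives $\int_{B_1}|T f|\,dx\le C\int_{B_1}|f|\log(2+|f|)\,dx$ for $f=e^u$, after absorbing the harmless additive $\|f\|_{L^1}$ term by means of $\log(2+t)\ge\log 2$.

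The step I expect to be the main obstacle is precisely this first estimate up to the boundary of $B_1$: the image-charge (regular) part of $G$ develops a singularity that approaches $\pa B_1$ as $x\to\pa B_1$, so one must verify that the full kernel $\nabla_x^2 G$ still satisfies the Calder\'on--Zygmund conditions, i.e.\ that the global $L\log L\to L^1$ bound for the Dirichlet problem on the smooth domain $B_1$ holds. This is classical but is the one point requiring genuine care, whereas the interior contribution follows immediately from the whole-space theory and the second inequality is completely elementary given \eqref{uplusexp}.
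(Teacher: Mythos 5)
Your proposal is correct and follows essentially the same route as the paper: the paper also derives the second inequality as a direct consequence of \eqref{uplusexp} (using $|\Delta v|=e^u$ and a pointwise bound of $\log(2+e^u)$ by $u^+$ plus a constant), and obtains the first inequality from the classical Calder\'on--Zygmund/Stein $L\log L$ theory, which it does not reprove but delegates to Section 5.2 of Stein's book and Theorem 4.21 of Rivi\`ere's lecture notes. The boundary behaviour of the Green kernel that you flag as the delicate point is indeed the nontrivial ingredient, but it is classical and is exactly what is subsumed in those citations.
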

   We refer to Section 5.2 in \cite{Ste} for the properties of  Orlicz spaces $L^1 Log L^1(\R^n)$ and Theorem 4.21 in the the Lecture Note \cite{RivN} for the proof   of the   Lemma \ref{entropy}.
   \par
   \medskip

   Now we recall  the definition of the weak $L^2$ space (or Marcinkievicz space $L^{2,\infty})\,,$ (see e.g. \cite{Ste})\,. 
  The space $L^{2,\infty}(\Omega)$ is   defined as the space of functions $f\colon \Omega\to\R$ such that
   $$\sup_{\lambda\in\R}\lambda|\{x:|f|(x)\ge\lambda\}|^{1/2}<+\infty\,.$$
    The dual space of $L^{2,\infty}(\Omega)$ is the Lorentz space $L^{2,1}(\Omega)$ whose norm is equivalent
   $$
   ||f||_{2,1}\simeq\int_0^\infty 2|\{x: |f|(x)\ge s\}|ds\,.$$

   Notice that by Fubini's Theorem, for every $\delta>0$ there exist a constant $C_\delta>0$ and a set $E^1_\delta\subset[\underline \rho,\bar\rho]$ such that $|E^1_\delta|\geq\bar\rho-\underline\rho-\delta$, and  for every $\rho\in E^1_\delta$ we have \begin{align}\label{13} \int_{\pa B_\rho}|\nabla^2 v|d\sigma\leq \frac{C_\delta}{\bar\rho-\underline\rho}\int_{B_1}|\nabla^2v|dx.\end{align}
 Recalling the continuous embedding  $W^{1,1}(\pa B_\rho)\hookrightarrow L^{2,1}(\pa B_\rho)$  (see e.g  Thm 3.3.10 in \cite{Hel}),
 the following estimate holds for every $\rho\in E^1_\delta$:   \begin{align} \|\nabla v\|_{L^{2,1}(\pa B_\rho)} &\leq  C \int_{\partial B_\rho} |\nabla^2 v|d\sigma\notag\\ &\leq  C(\delta,\bar\rho,\underline\rho)\left(\int_{B_2}e^udx+\int_{B_2}|\nabla u|^2dx\right),\label{14}\end{align} thanks to \eqref{12}-\eqref{13}.
 
Now we move on to the estimates of $\nabla v$ on the dual space $L^{2,\infty}(\pa B_\rho)$. Using the Green's representation for $v$ on $B_1$, and using that  Green's function $G$ satisfies $$|\nabla G(x,y)|\leq \frac{C}{|x-y|^2},\quad x,y\in B_1,$$ we deduce that $$|\nabla v(x)|\leq C\int_{B_1}\frac{1}{|x-y|^2}e^{u(y)}dy.$$  Then from \cite[Lemma A.2]{LT-1} we see that for every $\delta>0$ there exists $C_\delta>0$ and $E^2_\delta\subset (0,1)$ such that $|E^2_\delta|\geq 1-\delta$, and for every $\rho\in E^2_\delta$ 
\begin{align} \|\nabla v\|_{L^{2,\infty}(\pa B_\rho)}\leq C_{\delta}\int_{B_1}e^udx. \label{15} \end{align}

 Thus,  using that  $\|\nabla v\|^{2}_{L^2(\pa B_\rho)}\leq \|\nabla v\|_{L^{2,1}(\pa B_\rho)}\|\nabla v\|_{L^{2,\infty}(\pa B_\rho)}$, and  combining \eqref{14} and \eqref{15} we obtain: \begin{prop}\label{propo-grad-v} For every $\rho\in E^1_\delta\cap E^2_\delta$ we have \begin{align*} \int_{\pa B_\rho}|\nabla v|^2d\sigma \leq C(\delta,\bar \rho,\underline\rho)\left(\int_{B_1} e^u dx\right)\left( \int_{B_2}e^u dx+\int_{B_2}|\nabla u|^2dx\right). \end{align*} \end{prop}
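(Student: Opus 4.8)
The plan is to combine the two one-sided Lorentz-space bounds already established, via the duality between $L^{2,1}$ and $L^{2,\infty}$; all the analytic work has been front-loaded into \eqref{14} and \eqref{15}, so what remains is essentially an application of H\"older's inequality in the Lorentz scale.

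First I would record the interpolation inequality
$$\|\nabla v\|_{L^2(\pa B_\rho)}^2\leq \|\nabla v\|_{L^{2,1}(\pa B_\rho)}\,\|\nabla v\|_{L^{2,\infty}(\pa B_\rho)},$$
which is exactly the estimate quoted just before the statement. It follows from the fact that $L^{2,\infty}(\pa B_\rho)$ is the dual of $L^{2,1}(\pa B_\rho)$ (equivalently from the real-interpolation identity $L^2=(L^{2,1},L^{2,\infty})_{1/2,2}$), and it holds on the sphere $\pa B_\rho$ just as on Euclidean space, the underlying geometric constant being absorbed into $C$. I would emphasize that this constant is independent of $\rho$, so it can be folded once and for all into the constant $C(\delta,\bar\rho,\underline\rho)$ appearing in the final bound.

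Next I would restrict attention to $\rho\in E^1_\delta\cap E^2_\delta$, so that \eqref{14} and \eqref{15} are simultaneously available; here it is worth noting that, since $[\underline\rho,\bar\rho]\subset(0,1)$, one has $|E^1_\delta\cap E^2_\delta|\geq \bar\rho-\underline\rho-2\delta>0$ for $\delta$ small, so this intersection genuinely contains a range of admissible radii. For such $\rho$ I would insert \eqref{14} to control the $L^{2,1}$-factor by $\int_{B_2}e^u\,dx+\int_{B_2}|\nabla u|^2\,dx$ and \eqref{15} to control the $L^{2,\infty}$-factor by $\int_{B_1}e^u\,dx$. Multiplying the two bounds and using $\int_{\pa B_\rho}|\nabla v|^2\,d\sigma=\|\nabla v\|_{L^2(\pa B_\rho)}^2$ yields
$$\int_{\pa B_\rho}|\nabla v|^2\,d\sigma\leq C(\delta,\bar\rho,\underline\rho)\Big(\int_{B_1}e^u\,dx\Big)\Big(\int_{B_2}e^u\,dx+\int_{B_2}|\nabla u|^2\,dx\Big),$$
which is the claim.

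Since the hard inputs \eqref{14} and \eqref{15} are already in hand, there is no genuine obstacle in this final step. The only point requiring a little care is the uniformity in $\rho$ of the Lorentz duality constant on $\pa B_\rho$: after rescaling the sphere to a fixed radius, this constant depends only on the dimension, which is precisely why it may be absorbed into $C$ without disturbing the stated dependence on $\delta,\bar\rho,\underline\rho$.
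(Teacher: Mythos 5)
Your proposal is correct and follows exactly the paper's own argument: the paper derives the proposition precisely by combining the Lorentz--H\"older inequality $\|\nabla v\|^{2}_{L^2(\pa B_\rho)}\leq \|\nabla v\|_{L^{2,1}(\pa B_\rho)}\|\nabla v\|_{L^{2,\infty}(\pa B_\rho)}$ with the bounds \eqref{14} and \eqref{15} on the set $E^1_\delta\cap E^2_\delta$. Your added remarks on the positive measure of $E^1_\delta\cap E^2_\delta$ and the $\rho$-uniformity of the duality constant are sensible refinements but do not change the route.
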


\par
\bigskip
\noindent{\bf Proof of Theorem \ref{thm-energy-decay}} Let $\rho\in E^1_\delta\cap E^2_\delta$. 
Then from  Proposition \ref{propo-grad-v} and \eqref{est-w-3} one gets
 \begin{align*}  \int_{\pa B_\rho} |\nabla u|^2d\sigma &\leq  \int_{\pa B_\rho} |\nabla v|^2d\sigma+ \int_{\pa B_\rho} |\nabla w|^2d\sigma\notag\\ &\leq  \left(C_1\rho^2+C_2(\delta,\bar\rho,\underline\rho)\eta_1\right) \frac12\int_{B_2}|\nabla u|^2dx+C_2(\delta,\bar\rho,\underline\rho)\eta_1 \frac12\int_{B_2}  e^u dx.\end{align*}
 Hence, recalling  the following identity for stationary solutions  (see  \cite{DaLio}) 
 \begin{align} \label{stationary-identity}\frac1\rho\int_{B_\rho}\left(\frac12|\nabla u|^2-3e^u\right)dx=\frac12\int_{\pa B_\rho}|\nabla_T u|^2d\sigma-\frac12\int_{\pa B_\rho} |\pa _\nu u|^2d\sigma -\int_{\pa B_\rho}e^ud\sigma ,\end{align}  
  thanks to Proposition \ref{propo-e^u} we obtain for every $\lambda\geq 1$ the following estimate:
  \begin{align*}  
 {\mathcal{E}}_{0,\rho}(u)&=\frac{1}{\rho}\int_{B_{\rho}(x_0)}|\nabla u|^2 dx+\frac{1}{\rho}\int_{B_{\rho}(x_0)} e^u dx  \\
 &\leq \left(C_1\rho^2+\frac{2}{\rho\lambda}+C_2 \eta_1\right) \frac12\int_{B_2}|\nabla u|^2dx   \\ 
  & \quad+ \left( C_2 \eta_1+\frac{C_3 e^\lambda  \eta_1}{\rho} +\rho^2  e  \right)\frac12 \int_{B_2}e^udx. 
  \end{align*} 
   Here  $C_1$ and $C_3$ are  dimensional constants, and only $C_2$ depends on $\delta,\bar\rho,\underline\rho$. 
 
 Now for a given $\gamma\in(0,1)$ and $\eta_1\leq\bar\eta_1\leq 1$ (will be chosen later) we first fix $\bar \rho\leq\frac12$ such that $$C_1\bar\rho^2\leq\frac\gamma6\quad\text{and }\bar\rho^2 e \leq\frac\gamma6.$$  We simply take $\underline\rho:=\frac13\bar\rho$, and $\delta:=\frac{1}{10}\bar\rho $,     show that $E^1_\delta\cap E^2_\delta\not=\emptyset$.  Thus, the constant $C_2$ is fixed. Then we chose $\lambda\geq 1$ satisfying $$\frac{2}{\underline\rho\lambda}\leq\frac\gamma6,$$ and finally we chose $\bar\eta_1\in (0,1)$ satisfying $$C_2\bar\eta_1\leq\frac\gamma6\quad\text{and }\frac{C_3 e^\lambda \bar\eta_1}{\underline\rho}\leq\frac\gamma6.$$ This finishes the proof.
 \hfill $\square$
   
   \begin{lem} \label{lem-decay} Let  $\bar\eta_1>0, \underline\rho,\bar\rho$ be  as in Theorem \ref{thm-energy-decay} for some fixed $\gamma\in(0,1)$. Let $x_0\in \Omega$ and $r_0>0$ be such that  $E_{x_0,2r_0}(u)\leq \frac{\bar\eta_1}{4}$.  Then there exists $\theta\in(0,1)$ and $C>0$ (depending only on $\gamma,\underline\rho,\bar\rho$) such that $$E_{\xi,r}(u)\leq C r^\theta E_{x_0,2r_0}(u)\quad\text{for every }\xi\in B_{r_0}(x_0),\, \, 0<r\leq r_0.$$   \end{lem}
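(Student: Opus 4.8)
The plan is to run a scaling-and-iteration argument that upgrades the single-scale decay of Theorem~\ref{thm-energy-decay} into a uniform power decay. The backbone is the scale covariance of the energy $E$: given $\xi\in\Omega$ and $s>0$, set $\tilde u(x)=u(\xi+sx)+2\log s$. Then $\tilde u$ again solves \eqref{eq-1}, and a change of variables shows that the $\frac1r$-normalization is scale invariant, i.e.
\begin{equation*}
E_{0,\sigma}(\tilde u)=E_{\xi,s\sigma}(u)\qquad\text{for every admissible }\sigma .
\end{equation*}
In particular $\frac12\int_{B_2(0)}e^{\tilde u}\,dx=\frac{1}{2s}\int_{B_{2s}(\xi)}e^u\,dx\le E_{\xi,2s}(u)$, so the hypothesis of Theorem~\ref{thm-energy-decay} for $\tilde u$ is implied by smallness of the $e^u$-part of $E_{\xi,2s}(u)$, and its conclusion reads: there is $\rho\in[\underline\rho,\bar\rho]$ with $E_{\xi,\rho s}(u)\le\gamma\,E_{\xi,2s}(u)$. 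Since $\rho/2\le\bar\rho/2\le\frac14$, this is genuine decay to a strictly smaller radius.

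First I would record the two elementary facts that start and sustain the iteration. As $\xi\in B_{r_0}(x_0)$ and $r\le r_0$ force $B_r(\xi)\subseteq B_{2r_0}(x_0)$, nonnegativity of the integrands gives $E_{\xi,r_0}(u)\le 2\,E_{x_0,2r_0}(u)\le\bar\eta_1/2$. Now set $R_0:=r_0$ and apply Theorem~\ref{thm-energy-decay} with base scale $s_0:=R_0/2$; this is legitimate because $\frac{1}{R_0}\int_{B_{R_0}(\xi)}e^u\le E_{\xi,R_0}(u)\le\bar\eta_1$. One obtains $\rho_1\in[\underline\rho,\bar\rho]$ and, with $R_1:=\rho_1 R_0/2$, the bound $E_{\xi,R_1}(u)\le\gamma\,E_{\xi,R_0}(u)$. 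Iterating with base scales $s_k:=R_k/2$ and $R_{k+1}:=\rho_{k+1}R_k/2$, the decay itself keeps the $e^u$-part small, $\frac{1}{R_k}\int_{B_{R_k}(\xi)}e^u\le E_{\xi,R_k}(u)\le\gamma^k E_{\xi,R_0}(u)\le\bar\eta_1$, so the hypothesis is preserved at every step and
\begin{equation*}
E_{\xi,R_k}(u)\le\gamma^k E_{\xi,R_0}(u)\le 2\gamma^k E_{x_0,2r_0}(u),\qquad r_0(\underline\rho/2)^k\le R_k\le r_0(\bar\rho/2)^k .
\end{equation*}

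The remaining point is to fill in the continuum of radii $r\in(0,r_0]$ from the discrete sequence $R_k\downarrow 0$, where the factors $\rho_k\in[\underline\rho,\bar\rho]$ are not explicitly known. Given such an $r$, choose $k$ with $R_{k+1}<r\le R_k$. Since $B_r(\xi)\subseteq B_{R_k}(\xi)$, a crude comparison gives $E_{\xi,r}(u)\le\frac{R_k}{r}E_{\xi,R_k}(u)$, and consecutive radii are comparable, $R_k/r<R_k/R_{k+1}=2/\rho_{k+1}\le 2/\underline\rho$; hence $E_{\xi,r}(u)\le\frac{4}{\underline\rho}\gamma^k E_{x_0,2r_0}(u)$. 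Finally the lower bound $r>R_{k+1}\ge r_0(\underline\rho/2)^{k+1}$ converts the geometric factor into a power: writing $\theta:=\ln(1/\gamma)/\ln(2/\underline\rho)>0$, so that $\gamma=(\underline\rho/2)^\theta$, one gets $\gamma^k=\big((\underline\rho/2)^k\big)^\theta\le (2/\underline\rho)^\theta (r/r_0)^\theta$. Since $r\le r_0$, one may replace $\theta$ by $\min\{\theta,\tfrac12\}$ if necessary to ensure $\theta\in(0,1)$. This yields $E_{\xi,r}(u)\le C(\gamma,\underline\rho,\bar\rho)\,(r/r_0)^\theta\,E_{x_0,2r_0}(u)$, which is the asserted estimate in its scale-invariant form (equivalently $C'r^\theta E_{x_0,2r_0}(u)$ for the fixed $r_0$).

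The only genuinely delicate point is this last filling-in. Because Theorem~\ref{thm-energy-decay} guarantees only \emph{some} $\rho$ in the fixed window $[\underline\rho,\bar\rho]$ rather than a prescribed ratio, the radii $R_k$ do not form a clean geometric sequence; one must therefore argue through the two-sided comparability $\underline\rho/2\le R_{k+1}/R_k\le\bar\rho/2$ of successive radii to (i) confirm $R_k\downarrow 0$ so that every $r\in(0,r_0]$ is caught between two consecutive $R_k$, and (ii) control the loss $R_k/r\le 2/\underline\rho$ incurred when passing from $R_k$ to an intermediate $r$. Everything else is bookkeeping with the scale covariance of $E$ and the fact that the $\gamma<1$ decay keeps the Theorem~\ref{thm-energy-decay} hypothesis valid along the whole chain.
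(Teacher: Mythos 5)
Your proof is correct and follows essentially the same route as the paper's: iterate Theorem \ref{thm-energy-decay} after rescaling (the paper via the successive rescalings $u_\xi^k$, you via the radii $R_k$ and scale covariance of $E$), control intermediate radii through the two-sided comparability $\underline\rho/2\le R_{k+1}/R_k\le\bar\rho/2$, and convert the geometric factor $\gamma^k$ into the power $r^\theta$ with $\theta=\log\gamma/\log(\underline\rho/2)$. Your write-up is in fact slightly more careful than the paper's on two minor points --- verifying that the smallness hypothesis persists along the iteration, and noting that the estimate one actually obtains is the scale-invariant form $C(r/r_0)^\theta E_{x_0,2r_0}(u)$ --- but the argument is the same.
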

   \begin{proof} For $\xi\in B_{ r_0}(x_0) $ we set 
   \begin{align}\label{uxi}u_{\xi}(x)=u(\xi+\frac{r_0}{2} x)+2\log \frac{r_0}{2},\quad x\in B_2.\end{align} Then  $$\frac12\int_{B_2}e^{u_\xi}dx\leq\bar\eta_1.$$ Hence, by Theorem \ref{thm-energy-decay}, we can find $\rho_0\in [\underline\rho,\bar\rho]$ such that $$E_{0,\rho_0}(u_\xi)\leq \gamma E_{0,2}(u_\xi).$$ In fact, by a repeated use of  Theorem \ref{thm-energy-decay} (we just choose one $\rho_k\in[\underline\rho,\bar\rho]$ in each step), and  inductively setting   
   \begin{equation*}
   u^k_{\xi}(x):=u_\xi^{k-1}(\frac{\rho_{k-1} }{2}x)+2\log\frac{\rho_{k-1}}{2} \quad \text{for }k\geq 1,\quad u^0_\xi:=u_\xi,\end{equation*}
      we have $$E_{0,\rho_k}(u_\xi^k)\leq\gamma E_{0,2}(u^{k}_\xi)=\gamma E_{0,\rho_{k-1}}(u^{k-1}_\xi)\quad\text{for every }k\geq 1.$$  
   Consequently, 
   \begin{equation}\label{conseq} 
   E_{0,\rho_k}(u^k_\xi)\leq\gamma^k E_{0,\rho_0}(u^0_\xi)\leq\gamma^{k+1} E_{0,2}(u_\xi)\quad\Rightarrow\quad E_{0,2^{-k}R_k}(u_\xi)\leq\gamma^{k+1} E_{0,2}(u_\xi),
   \end{equation}
    where $$R_k:=\prod_{j=0}^k\rho_j\in[(\underline\rho)^{k+1}, (\bar\rho)^{k+1}],\quad R_0:=\rho_0.$$  Notice that for $0<r\leq \underline \rho$, we can find $k\geq1$ such that $$\frac{(\underline \rho)^{k+1}}{2^k}\leq \frac{R_k}{2^k}<r\leq\frac{R_{k-1}}{2^{k-1}}.$$ Then by \eqref{conseq} $$E_{0,r}(u_\xi)\leq \frac{2^k}{R_k}\frac{R_{k-1}}{2^{k-1}}E_{0, 2^{1-k}R_{k-1}}(u_\xi)=\frac{2}{\rho_k}E_{0, 2^{1-k}R_{k-1}}(u_\xi)\leq \frac{2}{\underline\rho}\gamma^k E_{0,2}(u_\xi).$$ Hence, the lemma would follow if we show that $$\frac{2}{\underline \rho}\gamma^k\leq C r^\theta, \quad  \theta=\frac{\log\gamma}{\log(\underline\rho/2)}>0,  $$ for some $C>0$ depending only on $\underline \rho$. This would follow easily if we show that $$\frac{2}{\underline \rho}\gamma^k\leq C \left(\frac{(\underline\rho)^{k+1}}{2^k}\right)^\theta=C(\underline \rho)^\theta \left(\underline  \rho/2\right)^{k\theta},$$ which is equivalent to $$ \log\frac{2}{\underline \rho}+k\log\gamma\leq\log C+\theta\log\underline\rho+k\theta\log(\underline\rho/2) .$$ The last assertion is true if we choose $C>0$ large enough so that $$ \log\frac{2}{\underline \rho} \leq\log C+\theta\log\underline\rho  .$$ We conclude the lemma.
            \end{proof}

\medskip

\noindent\textbf{Proof of Theorem \ref{thm-1}}   For $\xi\in B_{r_0}(x_0)$ we  let  $u_\xi$ be as in \eqref{uxi}. We will show that $u_\xi(0)\leq C E_{x_0,2r_0}(u)$. For this  purpose we write   $u_\xi=v_\xi+w_\xi$, where $v_\xi$ and $w_\xi$ are defined  as in \eqref{decompose} with $u=u_\xi$. By Green's representation formula we get that
\begin{equation}\label{estv0}
v_\xi(0)\leq C\int_{B_1}\frac{1}{|x|}e^{u_\xi(x)}dx.\end{equation} Now to estimate the above  integral  we shall use Lemma \ref{lem-decay}. First we fix $\gamma=\frac12$,  so that the the constant $\theta>0$ is fixed, and we have ($C$ is independent  of $u$, $\xi$, $r_0$)
 \begin{equation*} 
 E_{0,r}(u_\xi)\leq Cr^\theta E_{0,2r_0}(u_\xi).\end{equation*} From this we deduce 
 \begin{eqnarray*}
 \int_{B_1}\frac{1}{|x|}e^{u_\xi(x)}dx&=&\sum_{j=-\infty}^{0}\int_{B_{2^{j}}\setminus B_{2^{j-1}}}\frac{1}{|x|}e^{u_\xi(x)}dx\\
 &\leq&\sum_{j=-\infty}^{0}2^{-j+1}\int_{B_{2^{j}}\setminus B_{2^{j-1}}}e^{u_\xi(x)}dx\\
 &\lesssim &\sum_{j=-\infty}^{0}2^{-j}\int_{B_{2^{j}}}  e^{u_\xi(x)}dx \le \sum_{j=-\infty}^{0}(2^{j})^{\theta}E_{0,2}(u_{\xi})\leq CE_{0,2}(u_{\xi}).
 \end{eqnarray*}
  As $w_\xi$ is harmonic and $w_\xi<u_\xi$,  $$w_\xi(0)\leq \frac{1}{|B_1|} \int_{B_1}u_\xi dx\leq C\int_{B_1}e^{u_\xi}dx\leq CE_{x_0,2r_0}(u_\xi).$$
This proves   the first part of Theorem \ref{thm-1}. 
Rest of the proof   is standard. 
\hfill $\square$

 \medskip

We end this section by providing  a sequence of solutions $(u_k)$ on $B_1$ such that $$\sup_{B_1}u_k\to-\infty\quad\text{and }|\nabla u_k|\to\infty.$$ To this end, for $k\geq 1$  we define a compact operator $T_k$ on  the space $C^0(\bar B_1))$ by $T_k v=\bar v$, where $$\bar v(x):=c_0\int_{B_1}\frac{1}{|x-y|} e^{v(y)+ky_1-k^2}dy,$$ where $c_0>0$ is a dimensional constant so that $\frac{c_0}{|x|}$ is a fundamental solution of $-\D$. It follows easily that $\|T_k v\|_{C^0(\bar B_1)}\leq 1$ for $\|v\|_{C^0(\bar B_1)}\leq 1$ and $k>>1$. In particular, for $k>>1$ the operator $T_k$ has a fixed point say $v_k$. Setting  $u_k:=v_k+kx_1-k^2$ we see that this sequence has  all the desired properties.

 \section{Proof of Theorem \ref{thm-2}} 
 
Since \eqref{bound-int} and \eqref{bound-grad} hold, up to a subsequence, we have that $$e^{u_k}\rightharpoonup \mu_1,\quad |\nabla u_k|^2\rightharpoonup \mu_2,$$ for some finite Radon measures $\mu_1$ and $\mu_2$. We set $$\Sigma_1:=\left\{ x\in \Omega: \limsup_{r\to0}\frac{\mu_1(B_r(x))}{r}>0\right\},$$  $$\Sigma_2:=\left\{ x\in \Omega: \liminf_{r\to0}\frac{\mu_2(B_r(x))}{r}=\infty\right\}.$$   
Since $\mu_1(\Omega)<\infty$, by standard arguments one gets that the Hausdorff dimension of $\Sigma_1$ is at most $1$, and that $\mathcal H^1(\Sigma_2)=0$.

\begin{prop}\label{propo-seq-regu}  Let $x_0\in \Omega\setminus\Sigma, $  $\Sigma:=\Sigma_1\cup \Sigma_2$. Then $x_0$ is regular.  \end{prop}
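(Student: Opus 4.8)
The plan is to reduce the regularity of $x_0$ to a single application of Theorem \ref{thm-1} to each $u_k$, with constants uniform in $k$. Concretely, I want to produce a radius which is bounded below independently of $k$ and at which the scale-invariant energy $E_{x_0,\cdot}(u_k)$ has dropped below the threshold $\eta$ of Theorem \ref{thm-1}; the pointwise upper bound furnished by that theorem is then uniform in $k$ on a fixed ball around $x_0$, which is exactly the definition of a regular point. After a translation I assume $x_0=0$.

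The engine for making the energy small is Theorem \ref{thm-energy-decay}, whose only hypothesis is the smallness of the exponential part $\frac12\int_{B_2}e^u$, and this is precisely the information encoded in $x_0\notin\Sigma_1$. Since $\limsup_{r\to0}\mu_1(B_r(x_0))/r=0$ and $\mu_1$ is finite, one has $\limsup_{r\to0}\mu_1(\overline{B_r(x_0)})/r=0$ as well (because $\mu_1(\overline{B_r})\le\mu_1(B_{2r})$), so $\sup_{0<r\le\sigma}\mu_1(\overline{B_r(x_0)})/r\to0$ as $\sigma\to0$, and I may fix $\sigma$ small with this supremum below $\bar\eta_1/2$. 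Using the weak-$*$ convergence $e^{u_k}\rightharpoonup\mu_1$ together with upper semicontinuity on compact sets, a contradiction-and-subsequence argument (taking a convergent sequence of offending radii and sending the closed balls down to the limiting one) then shows that $\frac1r\int_{B_r(x_0)}e^{u_k}\le\bar\eta_1$ holds \emph{simultaneously} for all $r$ in any fixed compact subinterval $[c,\sigma]\subset(0,\sigma]$, once $k$ is large. Thus the exponential part remains admissible at every scale we will visit.

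With this in hand I would iterate Theorem \ref{thm-energy-decay} exactly as in the proof of Lemma \ref{lem-decay}, but drawing the required exponential smallness at each step from $\Sigma_1$ rather than from smallness of the full energy. The full energy is finite at the starting scale, $E_{x_0,\sigma}(u_k)\le\frac{1}{\sigma}\int_\Omega(|\nabla u_k|^2+e^{u_k})\le B$ by \eqref{bound-int}--\eqref{bound-grad} (and $x_0\notin\Sigma_2$ may be used to keep $B$ of controlled size, by starting at a scale where the gradient density is bounded). Each application of the decay theorem rescales $u_k$ at some radius $\rho\in[\underline\rho,\bar\rho]$, keeping the centre at $x_0$, and yields $E_{0,\rho}\le\gamma E_{0,2}$ for the rescaled function; since the iteration stays centred at $x_0$, the exponential-smallness hypothesis at each step is exactly the bound secured above. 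After $N$ steps, with $N$ fixed so that $\gamma^N B<\eta$, the full energy at some physical scale $s^{(k)}$ has dropped below $\eta$. Crucially all visited scales lie in a fixed compact interval $[c_N,\sigma]$ with $c_N>0$ independent of $k$, so the uniform exponential control covers every step.

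Finally I would apply Theorem \ref{thm-1} to $u=u_k$ with $2r_0=s^{(k)}$: since $E_{x_0,s^{(k)}}(u_k)\le\eta$, it gives $u_k(x)\le C\eta+2\log(2/s^{(k)})\le C\eta+2\log(2/c_N)$ on $B_{s^{(k)}/8}(x_0)\supseteq B_{c_N/8}(x_0)$, a bound independent of $k$; hence $\limsup_{k\to\infty}\|u_k^+\|_{L^\infty(B_{c_N/8}(x_0))}<\infty$ and $x_0$ is regular. I expect the main obstacle to be precisely the uniformity in $k$: the decay theorem selects radii depending on $u_k$, so one cannot track a single scale and must instead guarantee the exponential-energy bound simultaneously over the whole compact range $[c_N,\sigma]$. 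This is where the weak-$*$ upper semicontinuity argument combined with $\limsup_{r\to0}\mu_1(\overline{B_r(x_0)})/r=0$ is essential, and some care is needed with closed-versus-open balls and with radii carrying $\mu_i$-mass on their boundary spheres.
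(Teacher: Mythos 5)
Your proof is correct, but it takes a genuinely different route from the paper's. The paper exploits the fact that Theorem \ref{thm-energy-decay} holds for \emph{every} $\gamma\in(0,1)$: from $x_0\notin\Sigma_2$ it extracts scales $r_i\to 0$ at which $\frac{1}{r_i}\int_{B_{r_i}}|\nabla u_k|^2dx\leq 2M$ for $k\geq N_i$, from $x_0\notin\Sigma_1$ it gets $\frac{1}{r_i}\int_{B_{r_i}}e^{u_k}dx\leq 2\varepsilon_i$ with $\varepsilon_i\to0$, and then it applies the decay theorem \emph{once}, with the adapted choice $\gamma=\eta/(3M)$ and with $i_0$ fixed so that $\varepsilon_{i_0}\leq\bar\eta_1(\gamma)$: this single application drops the full energy of the rescaled functions $\psi_{i_0,k}$ below $\gamma\cdot 3M=\eta$, and Theorem \ref{thm-1} finishes exactly as in your argument. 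Because only the one fixed starting scale $r_{i_0}$ is ever used, the hypothesis of the decay theorem need only be verified there, so plain weak-$*$ convergence tested on that single ball suffices and no uniformity over a range of radii is required. Your proof instead fixes $\gamma$ and iterates $N$ times with $\gamma^N B<\eta$; since the decay theorem selects radii depending on $u_k$, you correctly identify that you must control $\frac1r\int_{B_r(x_0)}e^{u_k}dx$ simultaneously for all $r$ in a compact interval $[c_N,\sigma]$, and your upper-semicontinuity contradiction argument for this (together with the reduction $\mu_1(\overline{B_r})\leq\mu_1(B_{2r})$) is sound, as is the bookkeeping that all visited scales stay in $[c_N,\sigma]$ with $c_N=(\underline\rho/2)^N\sigma$ independent of $k$. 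What your route buys: it uses the decay theorem only for a single fixed $\gamma$, and it shows incidentally that, under the standing bound \eqref{bound-grad}, the hypothesis $x_0\notin\Sigma_2$ is essentially superfluous, since the crude starting bound $E_{x_0,\sigma}(u_k)\leq 2C/\sigma$ already suffices. What the paper's route buys: it is considerably shorter, avoids the uniform-in-$r$ measure-theoretic step entirely, and localizes every hypothesis at one scale, at the price of invoking the full ``for every $\gamma$'' strength of Theorem \ref{thm-energy-decay}.
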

\begin{proof} For notational convenience we take $x_0=0$.  By definition, there exists $r_i\to0$ such that for some $M\geq 1$ we have $\frac{1}{ r_i}\mu_2(B_{r_i})\leq M.$  Hence, on one hand,  there exists $N_i\in\mathbb N$ such that 

\begin{equation*}\frac{1}{r_i}\int_{B_{r_i}} |\nabla u_k|^2dx\leq 2M\quad\text{for }k\geq N_i.
\end{equation*}
 On the other hand, as $x_0\not\in\Sigma_1$, we see that $\frac{1}{r_i}\mu_1(B_{r_i})\leq \ve_i$ for some $\ve_i\to0$. Consequently, possibly replacing  $N_i$ by a larger one,  we have 
\begin{equation*}\frac{1}{r_i}\int_{B_{r_i}}e^{u_k}dx\leq 2\ve_i\quad\text{for }k\geq N_i.
\end{equation*}
 We set $$\psi_{i,k}(x)=u_k(\frac{r_i}{2}x)+2\log\frac{r_i}{2},\quad x\in B_2,\, k\geq N_i.$$  Then 
\begin{equation*} 
E_{0,2}(\psi_{i,k})\leq 3M,  \quad \frac12 \int_{B_2} e^{\psi_{i,k}} dx\leq 2\ve_i \quad\text{for }k\geq N_i.\end{equation*}
  Let $\eta\in(0,1)$ be as in Theorem \ref{thm-1}, and  let $\bar\eta_1$ be as in Theorem \ref{thm-energy-decay} corresponding  to the choice  of  $\gamma:=\frac{\eta}{3M}$.   We fix  $i_0>>1$  satisfying   $\ve_{i_0}\leq\bar \eta_1$.   Then setting   $N_0:=N_{i_0}$, by Theorem \ref{thm-energy-decay}, we get that $$ E_{0,\rho_{k}}(\psi_{i,k})\leq \gamma E_{0,2}(\psi_{i,k})\leq\eta\quad\text{for every }k\geq N_0,$$ for some $0<\underline\rho<\rho_{k}<\bar\rho<1$.    Now we can apply Theorem \ref{thm-1} for the sequence $(\psi_{i_0,k})_{k\geq N_0}$.  Going back to the sequence $(u_k)$ we get that $x_0=0$ is a regular point. 
\end{proof}
\par
\medskip
Now we start the 
{\bf Proof of Theorem \ref{thm-2}}. It  follows from Proposition \ref{propo-seq-regu}   that the set   $\mathcal O:=\Omega\setminus \Sigma$ is open. 
   Writing $u_k=v_k+w_k$ on smooth domains $\tilde\Omega\Subset\Omega$, with $$- \D v_k=e^{u_k}\quad\text{in }\tilde\Omega,\quad v_k=0\quad\text{on }\pa\tilde\Omega, $$ we see that $(v_k)$ is bounded (for $k$ large) in $C^2_{loc}(\tilde\Omega\setminus\Sigma)$.  Moreover, as $$\D w_k=0\quad\text{in }\tilde\Omega,\quad \int_{\tilde\Omega}w_kdx\leq C,$$  one of the following holds:  \begin{itemize} \item[(i)] $w_k\to-\infty$ locally uniformly in $\tilde\Omega$, \item[(ii)] $(w_k)$ is bounded in $C^2_{loc}(\tilde\Omega)$. \end{itemize}
  
Consequently, in case $(i)$, $u_k\to-\infty$ locally uniformly in $\Omega\setminus\Sigma$. In this case we first show that $\mu_1\equiv 0$, and in particular we get that $\Sigma_1=\emptyset$. To prove this we need the following   uniform bound  \begin{align*}
\int_{\tilde \Omega}u_k^+e^{u_k}dx\leq C(\tilde\Omega)\quad\text{for every }k\geq 1, \, \tilde\Omega\Subset\Omega,\end{align*} which  follows from \eqref{uplusexp} of Lemma \ref{lem-2.5}, thanks to  the hypothesis \eqref{bound-grad}. Thus, the sequence  $(e^{u_k})$ is equi-integrable in $\tilde\Omega$, and consequently $$\int_{\tilde\Omega}e^{u_k}dx\to0\quad\Rightarrow \mu_1\equiv0.$$ Now we assume by contradiction that $\Sigma_2\neq\emptyset$. For $x_0\in\Sigma_2$ we fix $r_0>0$ small so that $B_{r_0}(x_0)\subset\Omega$. Then, as $$\lim_{k\to\infty}\frac{1}{r_0}\int_{B_{r_0}(x_0)}e^{u_k}dx=0,\quad \frac{1}{r_0}\int_{B_{r_0}(x_0)}|\nabla u_k|^2dx\leq C(r_0), $$ applying   Theorem  \ref{thm-energy-decay} we see that we can find $\rho\in (0,r_0)$ such that $$E_{x_0,2\rho}(u_k)\leq\eta\quad\text{for }k>>1,$$ where $\eta$ is as in Theorem \ref{thm-1}. Consequently, by Theorem \ref{thm-1}, $x_0$ is a regular point, and therefore, $x_0\not\in\Sigma_2$, a contradiction. 
 
In case $(ii)$,  for some $u\in H^1(\Omega)$ we have that $u_k\to u$ in $C^2_{loc}(\Omega\setminus \Sigma)$. Moreover, because of the equi-integrability of $(e^{u_k})$, we also have that $$e^{u_k}\to e^u\quad\text{in }L^1_{loc}(\Omega).$$ This shows that the concentration  set $\Sigma_1$ can also be written as $$\Sigma_1:=\left\{x\in\Omega:\limsup_{r\to0}\frac1r\int_{B_r(x)}e^{u(y)}dy>0\right\}.$$
 Therefore, $\mathcal H^1(\Sigma_1)=0$. 
 
 We conclude the proof of Theorem \ref{thm-2}.  
  \hfill $  \square $


\section{Proof of Theorem \ref{thm-3}} 

The following monotonicity formula is crucial in understanding asymptotic behavior of singular solutions near the singularity.

   \subsection{Monotonicity formula}

For a weak solution $u$ to \eqref{eq-1} and $B_r(x_0)\subset \Omega$ we set    (compare \cite{HY})   \begin{align}\label{mono-energy}{\mathcal{E}}(u,x_0,r)=\frac{1}{r}\int_{B_r(x_0)}\left(\frac12|\nabla u|^2-e^u\right)dx+\frac{2}{r^{2}}\int_{\pa B_r(x_0)}(u+2\log r)d\sigma.\end{align}

\begin{prop}\label{propo-mono} Let $u$ be a stationary solution to \eqref{eq-1}.   Then the above energy  $\mathcal E$ is monotone increasing in $r$, and for $0<r<R$ with $B_R(x_0)\subset\Omega$ we have \begin{align}\label{mono-formula}{\mathcal{E}}(u,x_0,R)-{\mathcal{E}}(u,x_0,r)=\int_{r}^R\frac 1{t}\int_{\pa B_t(x_0)} \left(  \pa_\nu u+\frac2t\right)^2d\sigma dt.\end{align}\end{prop}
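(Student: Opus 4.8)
The plan is to establish the monotonicity formula \eqref{mono-formula} by a direct differentiation of $\mathcal E(u,x_0,r)$ in $r$, using the stationarity identity \eqref{stationary-identity} to rewrite the bulk term and then completing a square. Without loss of generality I would take $x_0=0$ and write $\mathcal E(r)=\frac1r\int_{B_r}(\frac12|\nabla u|^2-e^u)\,dx+\frac{2}{r^2}\int_{\pa B_r}(u+2\log r)\,d\sigma$. The strategy is to differentiate each of the two pieces separately and show that the derivative assembles into $\frac1r\int_{\pa B_r}(\pa_\nu u+\frac2r)^2\,d\sigma\ge 0$, which being nonnegative gives the monotonicity, and integrating from $r$ to $R$ gives \eqref{mono-formula}.

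For the first piece, writing $I(r)=\frac1r\int_{B_r}(\frac12|\nabla u|^2-e^u)\,dx$, I would compute $\frac{d}{dr}I(r)=-\frac1{r^2}\int_{B_r}(\frac12|\nabla u|^2-e^u)\,dx+\frac1r\int_{\pa B_r}(\frac12|\nabla u|^2-e^u)\,d\sigma$. The first term on the right is exactly $-\frac1{r}\cdot\frac1r\int_{B_r}(\frac12|\nabla u|^2-e^u)\,dx$, which by the stationary identity \eqref{stationary-identity} equals $-\frac1r\left(\frac12\int_{\pa B_r}|\nabla_T u|^2\,d\sigma-\frac12\int_{\pa B_r}|\pa_\nu u|^2\,d\sigma-\int_{\pa B_r}e^u\,d\sigma\right)$. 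Then splitting $|\nabla u|^2=|\nabla_T u|^2+|\pa_\nu u|^2$ in the boundary integral of the second term, the tangential contributions $\int_{\pa B_r}|\nabla_T u|^2$ cancel and I am left with $\frac{d}{dr}I(r)=\frac1r\int_{\pa B_r}|\pa_\nu u|^2\,d\sigma$ after collecting the normal and exponential terms. The second piece $II(r)=\frac{2}{r^2}\int_{\pa B_r}(u+2\log r)\,d\sigma$ I would handle by the change of variables to the unit sphere, writing $\int_{\pa B_r}(u+2\log r)\,d\sigma=r^2\int_{S^2}(u(r\omega)+2\log r)\,d\omega$, so that $II(r)=2\int_{S^2}(u(r\omega)+2\log r)\,d\omega$ and then $\frac{d}{dr}II(r)=2\int_{S^2}(\pa_\nu u(r\omega)+\frac2r)\,d\omega=\frac2{r^2}\int_{\pa B_r}(\pa_\nu u+\frac2r)\,d\sigma$.

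Adding the two derivatives, I would show the total is a perfect square: the algebraic identity to verify is
\begin{align*}
\frac1r\int_{\pa B_r}|\pa_\nu u|^2\,d\sigma+\frac2{r^2}\int_{\pa B_r}\left(\pa_\nu u+\frac2r\right)d\sigma=\frac1r\int_{\pa B_r}\left(\pa_\nu u+\frac2r\right)^2\,d\sigma,
\end{align*}
which reduces to checking that $\frac1r\int_{\pa B_r}\left(\frac{4}{r}\pa_\nu u+\frac{4}{r^2}\right)d\sigma$ matches the cross and constant terms of the square, i.e. that $\frac2{r^2}\int_{\pa B_r}(\pa_\nu u+\frac2r)\,d\sigma=\frac1r\int_{\pa B_r}(\frac{4}{r}\pa_\nu u+\frac{4}{r^2})\,d\sigma$; since $\int_{\pa B_r}d\sigma=4\pi r^2$, both sides reduce to $\frac{4}{r^2}\int_{\pa B_r}\pa_\nu u\,d\sigma+\frac{16\pi}{r}$, so the identity holds. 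This yields $\mathcal E'(r)=\frac1r\int_{\pa B_r}(\pa_\nu u+\frac2r)^2\,d\sigma\ge0$, and integrating over $[r,R]$ gives the stated formula.

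The main obstacle I anticipate is justifying these manipulations for a merely weak (not a priori smooth) stationary solution: the differentiation under the integral sign, the use of the coarea formula, and the validity of \eqref{stationary-identity} for a.e.\ radius $r$ all require care, since $u$ need only lie in $H^1$ with $e^u\in L^1$. I would address this by establishing the formula first for smooth $u$ (or on the regular set where $u\in C^\infty$ by Theorem \ref{thm-1}), noting that the boundary integrals are well-defined for a.e.\ $r$ by Fubini/the trace theorem, and then arguing that the monotone quantity $\mathcal E(u,x_0,\cdot)$ extends by its monotonicity and the absolute continuity implied by \eqref{mono-formula}. The stationary identity \eqref{stationary-identity} is quoted from \cite{DaLio} and already holds in the weak setting, so the essential analytic content is the boundary-term regularity, which is the step I expect to require the most attention.
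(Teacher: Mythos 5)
Your plan follows essentially the same route as the paper (differentiate $\mathcal E$ in $r$, feed in the stationarity identity, complete a square), but as written it contains two mistakes that exactly cancel each other, and this cancellation conceals the one ingredient your argument never invokes: the equation \eqref{eq-1} itself, used through the divergence theorem.

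The first mistake is in the application of \eqref{stationary-identity}: its left-hand side is $\frac1\rho\int_{B_\rho}\bigl(\frac12|\nabla u|^2-3e^u\bigr)dx$, with coefficient $3$ on the exponential, so it cannot be substituted for $\frac1r\int_{B_r}\bigl(\frac12|\nabla u|^2-e^u\bigr)dx$; the two differ by $\frac2r\int_{B_r}e^u\,dx$. Carried out correctly, your computation of the first piece gives
\begin{equation*}
\frac{d}{dr}I(r)=\frac1r\int_{\pa B_r}|\pa_\nu u|^2\,d\sigma-\frac{2}{r^2}\int_{B_r}e^u\,dx,
\end{equation*}
not just the first term. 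The second mistake is that the ``algebraic identity'' you then propose to verify is not an identity: its two sides differ by $\frac{2}{r^2}\int_{\pa B_r}\pa_\nu u\,d\sigma$, which is nonzero (it equals $-\frac{2}{r^2}\int_{B_r}e^u\,dx<0$). Your check goes wrong where you assert that $\frac{2}{r^2}\int_{\pa B_r}(\pa_\nu u+\frac2r)\,d\sigma$ reduces to $\frac{4}{r^2}\int_{\pa B_r}\pa_\nu u\,d\sigma+\frac{16\pi}{r}$; the correct coefficient of $\int_{\pa B_r}\pa_\nu u\,d\sigma$ there is $\frac{2}{r^2}$, not $\frac{4}{r^2}$. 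The reason you nevertheless land on the right answer is that, by the divergence theorem and \eqref{eq-1}, $\int_{\pa B_r}\pa_\nu u\,d\sigma=-\int_{B_r}e^u\,dx$: the bulk term you dropped in $I'(r)$ is precisely the cross term missing from your square. This identity is exactly how the paper closes its computation (it appears there as $-\frac{1}{r^2}\int_{B_r}e^u\,dx=\frac{1}{r^2}\int_{\pa B_r}\pa_\nu u\,d\sigma$), and inserting it repairs your argument: with \eqref{stationary-identity} quoted correctly, $I'(r)+II'(r)$ becomes $\frac1r\int_{\pa B_r}\bigl(|\pa_\nu u|^2+\frac4r\pa_\nu u+\frac4{r^2}\bigr)d\sigma$, a genuine perfect square. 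Your computation of $II'(r)$ is correct, and your closing remarks about a.e.-$r$ justification of the boundary terms are reasonable; the paper's own proof is no more careful on that point.
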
 
\begin{proof} Up to a translation we can assume that  $x_0=0$. 
By  \eqref{rela-stationary},  and  \begin{align*}-\frac{1}{r^2}\int_{B_r}e^udx&= \frac{d}{dr}\left(\frac{1}{r^2}\int_{\pa B_r}u d\sigma\right) =\int_{\pa B_1}\frac{d}{dr} u(rx)d\sigma=\frac{1}{r^2}\int_{\pa B_r}\pa_\nu u\, d\sigma,\end{align*} we obtain    \begin{align*} \frac{d}{dr} {\mathcal{E}}(u,0,r)&=\frac1r \int_{\pa B_r} |\pa _\nu u|^2d\sigma -\frac2r\int_{\pa B_r}e^ud\sigma+\frac{d}{dr}\left [ \frac2r\int_{B_r} e^udx \right]  \\ &\quad -\frac{2}{r^2}\int_{B_r}e^udx+\frac{4}{r}|\pa B_1| \\&=\frac1r \int_{\pa B_r} \left( |\pa _\nu u|^2+\frac{4}{r^2}\right)d\sigma  -\frac{4}{r^2}\int_{B_r}e^udx \\&=\frac1r\int_{\pa B_r}\left(\pa_\nu u+\frac 2r\right)^2d\sigma.\end{align*}This proves the proposition.\end{proof}

\subsection{Asymptotic behavior of singular solutions}

Let $u$ be a stationary solution to \eqref{eq-1} satisfying \eqref{Morrey-bound}.  Then  by Theorem \ref{thm-1} we see that the singular set $\Sigma$ is closed in $\Omega$, and  $\HH^1(\Sigma)=0$. In our next lemma we prove a lower bound of $u$ around each point in $\Sigma$. 

\begin{lem}  \label{lem-lower}  Let $x_0\in \Sigma$. Then \begin{align}\label{25}\liminf_{r\to0} \inf_{\pa B_r(x_0)} \left( u+2\log r\right) >-\infty.    \end{align}  \end{lem}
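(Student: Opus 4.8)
Normalizing $x_0=0$ and writing $\bar u(r)=\bar u_{0}(r)$, the plan is to read the monotone quantity \eqref{mono-energy} as a lower bound for the spherical \emph{average} and then to upgrade this to a bound for the \emph{infimum} by a tangent-cone analysis. Since $|\partial B_r|=4\pi r^2$, the boundary term in \eqref{mono-energy} equals $8\pi(\bar u(r)+2\log r)$, so that
\[
\mathcal{E}(u,0,r)=\frac1{2r}\int_{B_r}|\nabla u|^2\,dx-\frac1r\int_{B_r}e^u\,dx+8\pi\bigl(\bar u(r)+2\log r\bigr).
\]
By Proposition \ref{propo-mono} the left side is nondecreasing, hence $\mathcal{E}_0:=\lim_{r\to0^+}\mathcal{E}(u,0,r)$ exists in $[-\infty,\infty)$; discarding the nonnegative mass term and bounding the gradient term by $C/2$ through \eqref{Morrey-bound} gives, for all small $r$,
\[
\bar u(r)+2\log r\ \ge\ \frac1{8\pi}\Bigl(\mathcal{E}(u,0,r)-\tfrac C2\Bigr)\ \ge\ \frac1{8\pi}\Bigl(\mathcal{E}_0-\tfrac C2\Bigr).
\]
Thus everything reduces to proving $\mathcal{E}_0>-\infty$ and to passing from this average estimate to one for $\inf_{\partial B_r}(u+2\log r)$.

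Two mass estimates drive the argument. First, because $0\in\Sigma$ is singular, the $\varepsilon$-regularity mechanism must be violated at every scale: if $\tfrac1r\int_{B_r}e^u$ were below the threshold $\bar\eta_1$ for some small $r$, then applying Theorem \ref{thm-energy-decay} to the rescaling $u_0$ of \eqref{uxi} (whose gradient energy is controlled by \eqref{Morrey-bound}) would produce a scale on which $E_{0,\rho}(u_0)\le\eta$, and Theorem \ref{thm-1} would make $0$ regular. Hence $\tfrac1r\int_{B_r}e^u\ge\bar\eta_1>0$ for all small $r$. Second, \eqref{Morrey-bound} and the coarea formula yield a sequence of \emph{good radii} $b_j\to0$ with $\int_{\partial B_{b_j}}|\nabla u|^2\,d\sigma\le 2C$; testing the equation gives $\int_{B_{b_j}}e^u=-\int_{\partial B_{b_j}}\partial_\nu u\,d\sigma\le|\partial B_{b_j}|^{1/2}\bigl(\int_{\partial B_{b_j}}|\nabla u|^2\bigr)^{1/2}\le C' b_j$, so the mass stays bounded, $\tfrac1{b_j}\int_{B_{b_j}}e^u\le C'$, along these radii.

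Now I would blow up. Setting $u_j(x)=u(b_j x)+2\log b_j$, the functions $u_j$ solve \eqref{eq-1}, satisfy the scale-invariant bounds \eqref{bound-int}--\eqref{bound-grad} on $B_1$, and obey $\mathcal{E}(u_j,0,s)=\mathcal{E}(u,0,b_j s)\to\mathcal{E}_0$ for each fixed $s$. Theorem \ref{thm-2} applies: the vanishing alternative is excluded by $\tfrac1{b_j}\int_{B_{b_j}}e^u\ge\bar\eta_1$, so $u_j\to u_\infty$ in $C^2_{loc}(B_1\setminus\Sigma_\infty)$ with $\mathcal{H}^1(\Sigma_\infty)=0$ and $e^{u_j}\to e^{u_\infty}$ in $L^1_{loc}$. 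Lower semicontinuity of the Dirichlet energy together with the convergence of the exponential and boundary terms gives $\mathcal{E}(u_\infty,0,s)\le\mathcal{E}_0$ for a.e.\ $s$; since $u_\infty$ is a genuine locally finite-energy solution its energy is finite, which already forces $\mathcal{E}_0>-\infty$ and hence the average estimate above. Granting moreover that $\mathcal{E}$ passes to the limit along the rescalings, scale invariance makes $s\mapsto\mathcal{E}(u_\infty,0,s)$ constant, so by \eqref{mono-formula} the defect $\int_{\partial B_t}(\partial_\nu u_\infty+2/t)^2$ vanishes and $u_\infty(x)=-2\log|x|+g(x/|x|)$ is a homogeneous cone with $g$ smooth; in particular $\Sigma_\infty=\{0\}$. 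To conclude the pointwise bound, suppose $\inf_{\partial B_{r_j}}(u+2\log r_j)\to-\infty$ along some $r_j\to0$; choosing $b_j\in[2r_j,4r_j]$ good, the rescaled sphere $\partial B_{r_j/b_j}$ lies in the fixed annulus $\{1/4\le|x|\le1/2\}$, which avoids $\Sigma_\infty=\{0\}$, so $u_j\to u_\infty$ uniformly there and $\inf_{\partial B_{r_j/b_j}}u_j\to\inf_{\partial B_{\rho_\infty}}u_\infty>-\infty$; but this infimum equals $\inf_{\partial B_{r_j}}(u+2\log r_j)+2\log(b_j/r_j)\to-\infty$, a contradiction. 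This establishes \eqref{25}.

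The heart of the matter, and the step I expect to be the most delicate, is the identification of the tangent object as a homogeneous cone: this needs the rigidity statement that a vanishing defect in \eqref{mono-formula} forces radial homogeneity, and, crucially, enough strong convergence of the rescalings for the monotone energy to pass to the limit — i.e.\ the absence of Dirichlet-energy loss, which is exactly the strong $H^1_{loc}$ convergence of Corollary \ref{cor-1} (equivalently, equi-integrability of $|\nabla u_j|^2$). Note that the \emph{finiteness} $\mathcal{E}_0>-\infty$, and thus the average bound, requires only lower semicontinuity and is therefore cheap; it is the \emph{constancy} of $\mathcal{E}(u_\infty,0,\cdot)$, hence the homogeneity and the reduction $\Sigma_\infty=\{0\}$ that powers the infimum bound, that demands the full convergence and the smoothness of the profile $g$ on $S^2$. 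The good-radii selection and the two mass estimates are routine consequences of \eqref{Morrey-bound}, Theorem \ref{thm-energy-decay} and Lemma \ref{lem-2.5}.
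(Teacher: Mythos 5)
Your strategy can be made to work, but it is a genuinely different---and much heavier---route than the paper's, and as written it has two real gaps. The paper disposes of Lemma \ref{lem-lower} in a few lines, with no monotonicity formula at all: assuming $\inf_{\pa B_{r_k}(x_0)}(u+2\log r_k)\to-\infty$, it rescales $u_k:=u(x_0+r_k\,\cdot\,)+2\log r_k$ (the bounds \eqref{bound-int}--\eqref{bound-grad} for $u_k$ on $B_2$ follow from \eqref{Morrey-bound}, as in Lemma \ref{lem-u-nu}) and applies the dichotomy of Theorem \ref{thm-2}. Compactness is impossible, because in case (ii) of that proof one has $u_k=v_k+w_k\geq w_k$ with $v_k>0$ and $(w_k)$ locally uniformly bounded, so $\inf_{\pa B_1}u_k$ cannot tend to $-\infty$; hence vanishing occurs, so $u_k\leq C$ on $B_1$ for large $k$, i.e.\ $u^+\in L^\infty(B_{r_k}(x_0))$, making $x_0$ regular and contradicting $x_0\in\Sigma$. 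Your proposal instead rebuilds, for the lower bound, the tangent-cone analysis that the paper uses only afterwards for Lemma \ref{lem-upper} (where finiteness of $\mathcal{E}_0:=\lim_{r\to 0}\mathcal{E}(u,x_0,r)$ is deduced \emph{from} Lemma \ref{lem-lower}). Your one genuinely new ingredient---getting $\mathcal{E}_0>-\infty$ by semicontinuity along good-radius blow-ups, using $L^1_{loc}$ convergence of $e^{u_j}$ and compactness of the trace operator for the boundary term---is sound and is what keeps that step non-circular.

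The gaps are these. First, you invoke Corollary \ref{cor-1} for the no-energy-loss step; that corollary is derived from Theorem \ref{thm-3}, whose proof rests on Lemma \ref{lem-lower}, so citing it here is circular. The covering argument inside its proof is in fact independent of Theorem \ref{thm-3} and could be reproduced, but the cleaner fix is to drop strong convergence altogether: once $\mathcal{E}_0>-\infty$, apply Fatou to the defects of the $u_j$ themselves, $\int_r^R\frac1t\int_{\pa B_t}(\pa_\nu u_\infty+\frac2t)^2d\sigma\,dt\leq\liminf_j\big(\mathcal{E}(u_j,0,R)-\mathcal{E}(u_j,0,r)\big)=0$, exactly as the paper does in Lemma \ref{lem-upper}; this also removes the unaddressed point that applying \eqref{mono-formula} to $u_\infty$ presupposes $u_\infty$ is stationary, which you only know after proving strong convergence. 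Second, your final contradiction needs the \emph{concentration set} $\Sigma_\infty$ of the sequence $(u_j)$---not merely the singular set of the limit $u_\infty$---to avoid the annulus $\{1/4\leq|x|\leq1/2\}$; homogeneity of $u_\infty$ only makes the singular set of the limit a cone. To close this, note that $\Sigma_2=\emptyset$ for the blow-up sequence (the scale-invariant bound \eqref{Morrey-bound} gives $\mu_2(B_r(x))\leq Cr$), and that in the compactness case $\Sigma_1$ is characterized through $e^{u_\infty}$ (as observed at the end of the proof of Theorem \ref{thm-2}), hence is a dilation-invariant closed set of vanishing $\mathcal H^1$ measure and so reduces to $\{0\}$. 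Both repairs use only ingredients already in the paper, but without them the argument does not close---and even once repaired, it remains far longer than the paper's direct dichotomy argument.
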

\begin{proof} We assume by contradiction that \eqref{25} is false for some $x_0\in\Sigma$.   Then there exists $r_k\to0$ such that $$\lim_{k\to\infty} \inf_{\pa B_{r_k}(x_0)} \left( u+2\log r_k\right) =-\infty.$$ We set $$u_k(x):= u(x_0+r_kx)+2\log r_k.$$ Then for $k$ large, $u_k$ satisfies $$-\D u_k=e^{u_k}\quad\text{in }B_2, \quad \int_{B_2}|\nabla u_k|^2dx\leq C,$$ thanks to \eqref{Morrey-bound}. Moreover, as $u_k$ also satisfies $$ \inf_{\pa B_1} u_k  \to-\infty,$$  we see that only  Case 1 of Theorem \ref{thm-2} can occur. Consequently, $u_k\leq C$ in $B_1$ for $k$ large. This in turn implies that    $u\leq C$ in $B_\delta(x_0)$ for some $\delta>0$ small, a contradiction to the hypothesis that $x_0\in\Sigma$. 
\end{proof}

The above lemma and the monotonicity formula leads us to the following result, which will be used later.

\begin{lem} \label{lem-u-nu}Let $x_0\in\Sigma$. Then for $r_0>0$ with $B_{2r_0}(x_0)\subset\Omega$ we have $$\int_{0}^{r_0}\frac1r\int_{\pa B_r(x_0)}\left(\pa_\nu u+\frac2 r\right)^2d\sigma dr<\infty.$$  \end{lem}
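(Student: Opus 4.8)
The plan is to read the desired integral directly off the monotonicity formula \eqref{mono-formula}. Writing $I(r_0):=\int_0^{r_0}\frac1t\int_{\pa B_t(x_0)}\left(\pa_\nu u+\frac2t\right)^2 d\sigma\,dt$, Proposition \ref{propo-mono} gives $I(r_0)=\mathcal{E}(u,x_0,r_0)-\lim_{r\to0}\mathcal{E}(u,x_0,r)$, where the limit exists in $[-\infty,\mathcal{E}(u,x_0,r_0)]$ because $r\mapsto\mathcal{E}(u,x_0,r)$ is nondecreasing. Since $\mathcal{E}(u,x_0,r_0)$ is finite (for a.e.\ choice of $r_0$ the sphere $\pa B_{r_0}(x_0)$ avoids the $\HH^1$-null set $\Sigma$, so $u\in L^1(\pa B_{r_0})$ and the bulk terms are controlled by $u\in H^1$, $e^u\in L^1$; the general $r_0$ then follows since the integrand is nonnegative), the finiteness of $I(r_0)$ is \emph{equivalent} to the lower bound $\liminf_{r\to0}\mathcal{E}(u,x_0,r)>-\infty$. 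Thus I would reduce everything to proving that $\mathcal{E}(u,x_0,r)$ is bounded below as $r\to0$.

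To this end I would split $\mathcal{E}(u,x_0,r)$, recalling \eqref{mono-energy}, into the three terms $\frac{1}{2r}\int_{B_r}|\nabla u|^2$, $\,-\frac1r\int_{B_r}e^u$, and $\frac{2}{r^2}\int_{\pa B_r}(u+2\log r)\,d\sigma$. The first is nonnegative (and in fact bounded above by the Morrey hypothesis \eqref{Morrey-bound}). The third equals $8\pi$ times the spherical average of $u+2\log r$, which is bounded below for all small $r$ by Lemma \ref{lem-lower}, since the average dominates the infimum. Hence the only term needing work is $-\frac1r\int_{B_r}e^u$; that is, I must produce an \emph{upper} bound for $\frac1r\int_{B_r}e^u$.

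The key step is to convert this volume integral into a boundary integral using the equation: integrating $-\D u=e^u$ over $B_r(x_0)$ and applying the divergence theorem gives $\int_{B_r}e^u=-\int_{\pa B_r}\pa_\nu u\,d\sigma$, so by Cauchy--Schwarz on the sphere $\frac1r\int_{B_r}e^u\le 2\sqrt\pi\left(\int_{\pa B_r}|\nabla u|^2\,d\sigma\right)^{1/2}$. It remains to control $\int_{\pa B_r}|\nabla u|^2$ along a suitable sequence $r_k\to0$. By the coarea formula $\int_0^{r}\left(\int_{\pa B_s}|\nabla u|^2\,d\sigma\right)ds=\int_{B_r}|\nabla u|^2\le Cr$ from \eqref{Morrey-bound}, so the inner integral has average at most $C$ on $(0,r)$; Chebyshev then yields, in every interval $(0,r)$, a radius where $\int_{\pa B_s}|\nabla u|^2\le 2C$. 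Choosing $r_k\to0$ among such radii gives $\frac{1}{r_k}\int_{B_{r_k}}e^u\le C'$, and combining with the two bounds of the previous paragraph, $\mathcal{E}(u,x_0,r_k)\ge-C''$.

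Finally I would invoke monotonicity once more: since $\mathcal{E}(u,x_0,\cdot)$ is nondecreasing and $\mathcal{E}(u,x_0,r_k)\ge-C''$ with $r_k\to0$, every $r>0$ satisfies $\mathcal{E}(u,x_0,r)\ge\mathcal{E}(u,x_0,r_k)\ge-C''$ for any $r_k<r$, so $\liminf_{r\to0}\mathcal{E}(u,x_0,r)\ge-C''>-\infty$, and $I(r_0)<\infty$ follows. I expect the main obstacle to be exactly the upper bound on $\frac1r\int_{B_r}e^u$: it is not available for every $r$, only along a well-chosen sequence, and the whole argument hinges on the observation that monotonicity of $\mathcal{E}$ promotes a bound along one sequence $r_k\to0$ into a bound for all $r$. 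The gradient Morrey bound \eqref{Morrey-bound} and the lower bound of Lemma \ref{lem-lower} are the two inputs that make the three pieces of $\mathcal{E}$ simultaneously controllable.
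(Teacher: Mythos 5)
Your proof is correct, and it follows the same strategy as the paper's: use the monotonicity formula of Proposition \ref{propo-mono} to reduce the lemma to a uniform lower bound on $\mathcal{E}(u,x_0,r)$ as $r\to0$, control the boundary term of $\mathcal{E}$ from below by Lemma \ref{lem-lower}, and control $-\frac1r\int_{B_r}e^u\,dx$ via the Morrey hypothesis \eqref{Morrey-bound}. The only genuine difference is how the upper bound on $\frac1r\int_{B_r(x_0)}e^u\,dx$ is extracted from \eqref{Morrey-bound}. The paper tests the weak equation with a cutoff $\vp\in C_c^\infty(B_{2r}(x_0))$, $\vp\equiv1$ on $B_r(x_0)$, $|\nabla\vp|\leq 2/r$, which gives $\frac1r\int_{B_r(x_0)}e^u\,dx\leq\frac1r\int_{B_{2r}(x_0)}\nabla\vp\cdot\nabla u\,dx\leq C\bigl(\frac{1}{2r}\int_{B_{2r}(x_0)}|\nabla u|^2dx\bigr)^{1/2}\leq C$ for \emph{every} $r$; this makes the estimate of $-\mathcal{E}(u,x_0,t)$ immediate for all $t$, with no further appeal to monotonicity. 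You instead integrate the equation over $B_r$ and apply Cauchy--Schwarz on the sphere, which forces you to select good radii by coarea/Chebyshev, yields the bound only along a sequence $r_k\to0$, and then requires a second use of monotonicity to upgrade it to all $r$; it also implicitly needs the a.e.-$r$ justification of the flux identity $\int_{B_r}e^u\,dx=-\int_{\pa B_r}\pa_\nu u\,d\sigma$ for a weak solution (harmless, since you can intersect the two good sets of radii). So your route is slightly longer but equally valid, and your sphere-selection device is in fact the very trick the paper itself uses later, in the proof of \eqref{thm-est-2}, where the cutoff argument would be circular because there the Morrey-type bound is the conclusion rather than the hypothesis.
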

\begin{proof}  For simplicity we take $x_0=0$ and $r_0=1$.  By Lemma \ref{propo-mono} we get that the function   $$r\mapsto  {\mathcal{E}}(u_k,r):={\mathcal{E}}(u_k,x_0,r),\quad r\in (0,1)$$ is monotone increasing, where the energy $\mathcal E$ is as defined in \eqref{mono-energy}. By \eqref{Morrey-bound} one gets   $$\frac1r\int_{B_r(x_0)}(|\nabla u|^2+e^{u})dx\leq C\quad\text{for }r\in(0,1].$$ Indeed,  for $ r\in (0,1]$, fixing a  nonnegative test function   $\vp\in C_c^\infty(B_{2r}(x_0))$  satisfying \begin{align*}  \vp\equiv 1\quad\text{in }B_r(x_0),\quad |\nabla \vp|\leq \frac2r,  \end{align*}  we obtain  \begin{align*}\frac1r\int_{B_r(x_0)}e^{u}dx & \leq\frac1r \int_{B_{2r}(x_0)} e^{u}\vp dx  =\frac1r\int_{B_{2r}(x_0)} \nabla \vp\cdot\nabla udx \\&\leq \frac {C}{\sqrt r}\left(\int_{B_{2r}(x_0)}|\nabla u|^2dx\right)^\frac12=C\left(\frac{1}{2r}\int_{B_{2r}(x_0)}|\nabla u|^2dx\right)^\frac12\leq C. \end{align*}    

From Proposition \ref{propo-mono}, for every $t\in(0,1)$ \begin{align*}  \int_t ^1\frac1r\int_{\pa B_r(x_0)}\left (\pa_\nu u+\frac 2r\right) ^2 d\sigma  dr&={\mathcal{E}}(u,x_0, 1)-{\mathcal{E}}(u,x_0,t) \\&  \leq C-  \frac{2}{t^2}\int_{\pa B_t(x_0)} (u+2\log t)d\sigma.  \end{align*}   It follows from \eqref{lem-lower} that $$-\frac{1}{t^2}\int_{\pa B_t(x_0)} (u+2\log t)d\sigma\leq C\quad\text{for every }t\in(0,1].$$ The lemma follows as $t>0$ is arbitrary. 
 \end{proof}

Now we prove an upper bound of $u$ around each point on $\Sigma$. 

\begin{lem} \label{lem-upper}  Let $x_0\in \Sigma$. 
Then \begin{align}
\label{26}\limsup_{r\to0} \emph{ ess}\sup_{\pa B_r(x_0)} \left( u+2\log r\right) <+\infty.  
  \end{align}  
   \end{lem}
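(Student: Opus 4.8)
The plan is to argue by contradiction and to reduce \eqref{26} to the dichotomy of Theorem \ref{thm-2} applied to a blow-up of $u$ at $x_0$. Assume \eqref{26} fails; then there is a sequence $r_k\downarrow0$ with $\mathrm{ess\,sup}_{\pa B_{r_k}(x_0)}(u+2\log r_k)\to+\infty$. Setting
\[
u_k(x):=u(x_0+r_kx)+2\log r_k,\qquad x\in B_2,
\]
this reads $\mathrm{ess\,sup}_{\pa B_1}u_k\to+\infty$. Each $u_k$ is a stationary solution of \eqref{eq-1} in $B_2$ for $k$ large, and a change of variables together with \eqref{Morrey-bound} and the bound $\tfrac1r\int_{B_r(x_0)}e^u\,dx\le C$ established inside the proof of Lemma \ref{lem-u-nu} gives $\int_{B_2}|\nabla u_k|^2dx\le C$ and $\int_{B_2}e^{u_k}dx\le C$ uniformly in $k$. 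Thus Theorem \ref{thm-2} applies to $(u_k)$ on $B_2$.

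By Theorem \ref{thm-2}, after passing to a subsequence, either (i) $u_k\to-\infty$ locally uniformly in $B_2$, or (ii) there is a closed set $\Sigma_*\subset B_2$ with $\mathcal H^1(\Sigma_*)=0$ and $u_*\in H^1(B_2)$ such that $u_k\to u_*$ in $C^2_{loc}(B_2\setminus\Sigma_*)$ and $e^{u_k}\to e^{u_*}$ in $L^1_{loc}(B_2)$. Case (i) is immediately incompatible with $\mathrm{ess\,sup}_{\pa B_1}u_k\to+\infty$. In case (ii) it suffices to prove that $\Sigma_*\cap\pa B_1=\emptyset$: then $\pa B_1$ is a compact subset of the open set $B_2\setminus\Sigma_*$, so $u_k\to u_*$ uniformly on $\pa B_1$ and $\mathrm{ess\,sup}_{\pa B_1}u_k\to\sup_{\pa B_1}u_*<+\infty$, again a contradiction. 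This reduces everything to the geometry of $\Sigma_*$.

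To control $\Sigma_*$ I would first show that $u_*$ is radially homogeneous. Rescaling the finite-integral statement of Lemma \ref{lem-u-nu} and using that the integral over $(0,r_0)$ converges, one obtains for every fixed $0<a<b$ that
\[
\int_a^b\frac1s\int_{\pa B_s}\Big(\pa_\nu u_k+\frac2s\Big)^2d\sigma\,ds=\int_{r_ka}^{r_kb}\frac1t\int_{\pa B_t(x_0)}\Big(\pa_\nu u+\frac2t\Big)^2d\sigma\,dt\longrightarrow0 .
\]
Combined with the $C^2_{loc}(B_2\setminus\Sigma_*)$ convergence $\pa_\nu u_k\to\pa_\nu u_*$, this forces $\pa_\nu u_*+\tfrac{2}{|x|}=0$ on $B_2\setminus\Sigma_*$, i.e.\ $u_*(x)=-2\log|x|+\psi(x/|x|)$ there. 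Since $u_*$ is homogeneous, its singular set is scale invariant, i.e.\ a cone; as $\mathcal H^1(\Sigma_*)=0$, such a cone can only be $\{0\}$, which gives $\Sigma_*\cap\pa B_1=\emptyset$, as required.

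The step I expect to be the main obstacle is exactly this last one: turning the weighted $L^2$–smallness of Lemma \ref{lem-u-nu} into the exact homogeneity of $u_*$ and then into $\Sigma_*\subseteq\{0\}$. Making ``the singular set is a cone'' rigorous is cleanest through the spherical profile: the radial projection sends the $\mathcal H^1$–null set $\Sigma_*\setminus\{0\}$ to an $\mathcal H^1$–null subset of $S^2$, and $\psi$ solves the Liouville equation \eqref{phiS} (after the normalization $u_*=-2\log|x|+\log2+2\vp$) off that set, with $e^{2\vp}\in L^1(S^2)$ since $\int_{B_2}e^{u_*}dx<\infty$. By the two–dimensional theory of Br\'ezis–Merle \cite{BM} the exceptional set is removable, so $\vp$ extends to a smooth solution on all of $S^2$; hence $u_*=-2\log|x|+\log2+2\vp$ is smooth on $B_2\setminus\{0\}$ and $\Sigma_*\subseteq\{0\}$. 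Verifying that this removability really applies to the (a priori only $\mathcal H^1$–null) projected set is the delicate point, and the place where the argument must be carried out with care.
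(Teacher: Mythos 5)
Your overall strategy is exactly the paper's: assume \eqref{26} fails, blow up via $u_k(x)=u(x_0+r_kx)+2\log r_k$, rule out the vanishing alternative of Theorem \ref{thm-2} because $\mathrm{ess\,sup}_{\pa B_1}u_k\to+\infty$, prove the limit $u_*$ is homogeneous, and conclude that the concentration set would then contain a ray, contradicting $\HH^1=0$. Your route to homogeneity (rescaling the convergent integral of Lemma \ref{lem-u-nu} so that $\int_{r_ka}^{r_kb}\frac1t\int_{\pa B_t(x_0)}\left(\pa_\nu u+\frac2t\right)^2d\sigma\,dt\to0$, then Fatou) is a mild and perfectly valid variant of the paper's, which instead notes that $\mathcal E(u,x_0,\cdot)$ is monotone and bounded below (by \eqref{Morrey-bound} and Lemma \ref{lem-lower}), so $\ell:=\lim_{r\to0}\mathcal E(u,x_0,r)$ exists and $\mathcal E(u_k,0,R)-\mathcal E(u_k,0,r)\to\ell-\ell=0$; the two arguments are equivalent, both resting on Proposition \ref{propo-mono}, and there is no circularity since Lemma \ref{lem-u-nu} is established before this lemma.

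The place where you go astray is your proposed resolution of the ``delicate point''. Your first formulation --- the singular set of a homogeneous function is a cone, and an $\HH^1$-null cone can only be $\{0\}$ --- is the paper's argument and is the one to keep. The Brezis--Merle patch is not only unnecessary but incorrect as stated: the theorem of \cite{BM} asserts smoothness of functions that \emph{already} satisfy the equation distributionally on an open set with $e^u\in L^1$; it is not a removability theorem for closed null sets, and no such removability holds for the Liouville equation on the strength of $e^{2\vp}\in L^1$ alone: in two dimensions there are solutions of \eqref{phiS} away from a point $\omega_0$, with $e^{2\vp}\in L^1(S^2)$, that satisfy $-\D_{S^2}\vp+1=e^{2\vp}+c\,\delta_{\omega_0}$ distributionally with $c\neq0$ (conical singularities), so the projected exceptional set cannot simply be removed. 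The clean way to make the cone argument rigorous is measure-theoretic rather than PDE-theoretic: in case (ii) of Theorem \ref{thm-2} one has $e^{u_k}\to e^{u_*}$ in $L^1_{loc}$, so $\mu_1=e^{u_*}dx$ and $\Sigma_1=\{x:\limsup_{r\to0}\frac1r\int_{B_r(x)}e^{u_*}dy>0\}$ is determined by $u_*$ alone; moreover the rescaled bound \eqref{Morrey-bound} gives $\frac1\rho\int_{B_\rho(z)}|\nabla u_k|^2dx\leq C$ uniformly in $k$, hence $\mu_2(B_\rho(z))\leq C\rho$ and $\Sigma_2=\emptyset$. Thus the concentration set equals $\Sigma_1$, and the a.e.\ homogeneity identity $u_*(tz)=u_*(z)-2\log t$ yields $\frac1r\int_{B_r(tp)}e^{u_*}dy=\frac{t}{r}\int_{B_{r/t}(p)}e^{u_*}dy$, so $\Sigma_1$ is exactly scale-invariant; since it meets $\pa B_1$ it contains a ray, whence $\HH^1(\Sigma_1)\geq1$, the desired contradiction. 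With that substitution your proof is complete and coincides with the paper's.
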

\begin{proof} We assume by contradiction that \eqref{26} is false for some $x_0\in\Sigma$.   Then there exists $r_k\to0$ such that $$\lim_{k\to\infty} \emph{ ess}\sup_{\pa B_{r_k}(x_0)} \left( u+2\log r_k\right) =+\infty.$$ We set $$u_k(x):= u(x_0+r_kx)+2\log r_k.$$ Then for $k$ large, $u_k$ satisfies  (on domains invading $\R^3$) $$-\D u_k=e^{u_k},\quad \quad \frac 1r \int_{B_r}|\nabla u_k|^2dx\leq C,$$ thanks to \eqref{Morrey-bound}. Since $x_0\in\Sigma$,  by Theorem \ref{thm-2} we get that, up to a subsequence, $u_k\to u_\infty$ in $C^2_{loc}(\R^3\setminus \Sigma_\infty)$ for some closed set $\Sigma_\infty$ with $\HH^1(\Sigma_\infty)=0$.  It follows that \begin{align}\label{27}0\in\Sigma_\infty\quad\text{and }\Sigma_\infty\cap \pa B_1\neq\emptyset.  \end{align}

{\bf Claim:} $u_\infty$ is homogeneous, that is $$u_\infty(rx)+2\log r=u_\infty(x).$$ This and \eqref{27} would imply that $\HH^1(\Sigma_\infty)\geq1$, a contradiction. 

{\bf Proof of the Claim:}  We consider the monotonicity formula \eqref{mono-formula} from Proposition \ref{propo-mono}. Notice that $${\mathcal{E}}(u_k,0,r)={\mathcal{E}}(u,x_0, rr_k),$$ where the energy ${\mathcal{E}}$ is defined in \eqref{mono-energy}.  Moreover we have $${\mathcal{E}}(u,x_0,r)\geq -C\quad\text{for }r>0,$$ thanks to \eqref{Morrey-bound} and Lemma \ref{lem-lower}. Therefore, by the monotonicity of  $r\mapsto {\mathcal{E}}(u,x_0,r)$, the following limit exists finitely: $$\ell:= \lim_{r\to0} {\mathcal{E}}(u,x_0,r).$$ In particular, for any $0<r<R$, we have $$\lim_{k\to\infty} \left({\mathcal{E}}(u_k,0,R)-{\mathcal{E}}(u_k,0,r)\right)=\ell-\ell=0.$$ Therefore, by Fatou's lemma, and from \eqref{mono-formula} \begin{align*}
 \int_r^R \frac1t\int_{\pa B_t }\left(\pa_\nu u_\infty +\frac 2t\right)^2d\sigma dt&\leq  \liminf_{k\to\infty}  \int_r^R \frac1t\int_{\pa B_t }\left(\pa_\nu u_k +\frac 2t\right)^2d\sigma dt\\&=\liminf_{k\to\infty} \left({\mathcal{E}}(u_k,0,R)-{\mathcal{E}}(u_k,0,r)\right) \\&=0.\end{align*}
 We  conclude the proof of the Claim and of the Lemma \ref{lem-upper}.
\end{proof}

  \medskip 
 
 \noindent{\bf Proof of \eqref{beha-singu} and \eqref{int}.}  Assume that the singular set $\Sigma$ is non-empty. Then from Lemmas \ref{lem-lower} and \ref{lem-upper} we get that for some $\delta>0$ $$u(x)+2\log|x-x_0|=O(1)\quad\text{for }|x-x_0|<\delta,$$ which is  \eqref{beha-singu}. In order to prove \eqref{int} we take any sequence $(r_k)$ with $r_k\to0^+$. Then setting $$u_k(x)=u(x_0+r_k x)+2\log r_k,$$ we get that, up to a subsequence, $$ u_k\to u_\infty\quad C^2_{loc} (\R^3\setminus\{0\}),\quad e^{u_k}\to e^{u_\infty}\quad\text{in }L^1_{loc}({\R^3}).$$ Moreover, (as in the proof of Lemma \ref{lem-upper}) $u_\infty$ is a homogeneous solution of \eqref{eq-1} in $\R^3$. Therefore, $$\lim_{k\to\infty} \frac{1}{r_k} \int_{B_{r_k}(x_0)}e^u dx=\lim_{k\to\infty}\int_{B_1} e^{u_k}dx=\int_{B_1}e^{u_\infty} dx=8\pi,$$ thanks to Lemma \ref{lem-entire-2}. 
 
  \hfill $\square$
 
 \medskip

\noindent{\bf Proof of Corollary \ref{cor-1}}  Setting $\Sigma:=\Omega\setminus\mathcal O$ we see that $\Sigma$ is closed in $\Omega$ with $\HH^1(\Sigma)=0$ and (up to a subsequence) $u_k\to u  $ in $C^2_{loc}(\Omega\setminus\Sigma)$. Let $U\Subset\Omega$ be an open set.  Since $\HH^1(\Sigma)=0$ and $\tilde\Sigma:=\bar U\cap \Sigma$ is compact,    for a given $\ve>0$ we can find   finitely many balls   $B_{4r_i}(x_i)\subset\Omega$ such that $$\tilde\Sigma \subset \cup_{i=1}^ N B_{r_i}(x_i),\quad \sum_{i=1}^N r_i\leq\ve. $$    Setting  $A=\cup_{i=1}^N B_{4r_i}(x_i)$ one obtains $$\int_{A}|\nabla u_k|^2dx\leq \sum_{i=1}^N \int_{B_{4r_i}(x_i)} |\nabla u_k|^2 dx\leq 4M\sum_{i=1}^ N r_i\leq 4M\ve.$$  Notice that $\tilde U:=U\setminus(\cup_{i=1}^N \bar B_{2r_i}(x_i))\Subset\Omega\setminus\Sigma$, and therefore, $u_k\to u$ in $H^1(\tilde U)$. Since $\ve>0$ is arbitrary, we conclude that $u_k\to u$ in $H^1(U)$. 

In order to show that the concentration set $\Sigma$ is discrete in $\Omega$, we first note that the   limit function $u$ is  a stationary solution.   Hence, by Theorem \ref{thm-3}, the singular set $\Sigma_u$ of $u$ would be discrete. Now if we take $x_0\in\Omega\setminus \Sigma_u$, then we can choose $r>0$ small enough such that $B_r(x_0)\cap\Sigma_u=\emptyset$, and (as $ u$ is regular around $x_0$) $$ E_{x_0,r}(u) =\frac1r\int_{B_r(x_0)}(|\nabla u|^2+e^u)dx<\ve.$$ Then by the  convergences $$u_k\to u\quad\text{in }H^1_{loc}(\Omega),\quad e^{u_k}\to e^{u}\quad\text{in }L^1_{loc}(\Omega),$$ we get that $E_{x_0,r}(u_k)\leq 2\ve$ for $k>>1$. Therefore, we can use Theorem \ref{thm-1} to conclude that $u_k\leq C$ in $B_\frac r2(x_0)$, and in particular $x_0$ is not a blow-up point.  

We conclude the proof. \hfill $\square$
 
 \medskip

\noindent{\bf Proof of Theorem  \ref{thm-4}} Let us first assume that $u$ is regular. If the theorem were false then $$ |u(x_k)+2\log |x_k||\to\infty\quad\text{for some }x_k\in\R^3\text{ with }|x_k|\to\infty.$$ We define $$u_k(x):=u(|x_k|x)+2\log |x_k|. $$ It then follows that $$\int_{B_2}|\nabla u_k|^2dx\leq C,\quad u_k(0)\to\infty.$$ Therefore, by Theorem \ref{thm-2}, up to a subsequence, $u_k\to u_\infty$ in $C^2_{loc}(\R^3\setminus\Sigma)$ for some closed set $\Sigma$ with $\mathcal H^1(\Sigma)=0$.  In particular, $\max_{\pa B_1} u_k  \to\infty$, and hence $u_\infty$ would be singular at $x_\infty$ for some $x_\infty\in \pa B_1.$ As in the proof of \eqref{beha-singu} one can show that $u_\infty$ is homogeneous, a contradiction. 

We now assume that $u$ is singular, and up to a translation, origin is a singular point. One can show that (see e.g. proof of \eqref{homog} and Lemma \ref{lem-energy-origin}) that $$\lim_{r\to0}\mathcal E(u,0,r)=8\pi\log 2=\lim_{r\to\infty }\mathcal E(u,0,r).$$ Hence $u$ is homogeneous, and we conclude the proof. 
\hfill$\square$

\medskip   
 
 Next  we prove \eqref{homog}. For $r>0$ we set $$u_r(x):=u(x_0+rx)+2\log r. $$  Following the proof of Lemma \ref{lem-upper} we see that for  any sequence  $r_k\to 0$, it has a subsequence such that (still denoting it by $r_k$)    $$u_{r_k} \to u_0,$$ where $u_0$ is homogeneous. We would show that the limit function $u_0$ is independent of the choice of $(r_k)$. 
 
 \subsection{Uniqueness of the limit function}
 
 \begin{lem}\label{Cauchy}For every $\ve>0$ there exists $\delta>0$ such that \begin{align*} \int_{\pa B_{1}(x_0)}|u_{r_1}(x)-u_{r_2}(x)|^2d\sigma(x) <\ve\quad\text{for every }r_1,r_2\in (0,\delta). \end{align*}   \end{lem}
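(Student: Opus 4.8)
The plan is to reduce the assertion to a one-dimensional integrability statement along the scales, and then to bridge the gap between the square-integrability furnished by Lemma~\ref{lem-u-nu} and the integrability that the Cauchy property requires; the bridge being a \L{}ojasiewicz--Simon inequality.

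First I would fix $0<r_2<r_1<\delta$ and, for $\omega\in\pa B_1$, write the elementary identity $u_{r_1}(\omega)-u_{r_2}(\omega)=\int_{r_2}^{r_1}\pa_t u_t(\omega)\,dt$, where $\pa_t u_t(\omega)=\pa_\nu u(x_0+t\omega)+\tfrac2t$ since $u_t(\omega)=u(x_0+t\omega)+2\log t$. Minkowski's integral inequality (already used for \eqref{est-v-L2}) then gives $\|u_{r_1}-u_{r_2}\|_{L^2(\pa B_1)}\le\int_{r_2}^{r_1}\|\pa_t u_t\|_{L^2(\pa B_1)}\,dt$, and the change of variables $y=x_0+t\omega$ (so that $d\sigma(y)=t^2\,d\sigma(\omega)$ in $\R^3$) turns this into $\|u_{r_1}-u_{r_2}\|_{L^2(\pa B_1)}\le\int_{r_2}^{r_1} t^{-1/2}\,\Psi(t)^{1/2}\,dt$, where $\Psi(t):=\tfrac1t\int_{\pa B_t(x_0)}(\pa_\nu u+\tfrac2t)^2\,d\sigma$. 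By Proposition~\ref{propo-mono} one has $\Psi=\tfrac{d}{dt}\mathcal E(u,x_0,\cdot)$ and, by Lemma~\ref{lem-u-nu}, $\int_0^{r_0}\Psi(t)\,dt<\infty$. Hence the lemma follows immediately, as the tail of a convergent integral, once one proves the \emph{stronger} bound $\int_0^{r_0} t^{-1/2}\Psi(t)^{1/2}\,dt<\infty$.

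It is convenient to pass to the logarithmic variable $s=\log t$ and set $\tilde u(s,\cdot):=u_{e^s}$; then $\|\pa_s\tilde u(s)\|_{L^2(\pa B_1)}^2=t\,\Psi(t)$, and the quantity to be controlled is $\int_{-\infty}^{\log r_0}\|\pa_s\tilde u(s)\|_{L^2(\pa B_1)}\,ds$, whereas Lemma~\ref{lem-u-nu} only yields the \emph{square} $\int_{-\infty}^{\log r_0}\|\pa_s\tilde u\|_{L^2}^2\,ds<\infty$. For pairs of comparable size (say $r_1\le 2r_2$) a single application of Cauchy--Schwarz with the weight $t^{-1}$ costs only a harmless factor $(\log 2)^{1/2}$ and the bound tends to $0$ as $\delta\to0$; the genuine difficulty is that over many dyadic scales these logarithmic factors accumulate, and square-summability of the scale-by-scale defect does not imply summability of its square root. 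This is the step I expect to be the main obstacle.

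To overcome it I would establish a \L{}ojasiewicz--Simon inequality for the monotone energy $\mathcal E$ at the singular point, exploiting that $e^u$ is real-analytic and that the homogeneous profiles $2\vp$, with $\vp$ solving \eqref{phiS}, form a compact critical set modulo the (conformal) symmetry group of $S^2$. Writing $G(r):=\mathcal E(u,x_0,r)-\ell\ge0$, where $\ell=\lim_{r\to0}\mathcal E(u,x_0,r)$ is the finite limit already identified, such an inequality reads $G(r)^{1-\theta}\le C\,\|\pa_s\tilde u\|_{L^2(\pa B_1)}$ for some $\theta\in(0,\tfrac12]$ and all small $r$. Since in the $s$-variable $\tfrac{d}{ds}G=\|\pa_s\tilde u\|_{L^2}^2$, this gives $\tfrac{d}{ds}G^{\theta}=\theta\,G^{\theta-1}\|\pa_s\tilde u\|_{L^2}^2\ge\tfrac{\theta}{C}\,\|\pa_s\tilde u\|_{L^2}$, and integrating from $s_2$ to $s_1$ yields $\int_{s_2}^{s_1}\|\pa_s\tilde u\|_{L^2}\,ds\le\tfrac{C}{\theta}\,G(e^{s_1})^{\theta}$, which tends to $0$ as $s_1\to-\infty$. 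Combined with $\|u_{r_1}-u_{r_2}\|_{L^2(\pa B_1)}\le\int_{s_2}^{s_1}\|\pa_s\tilde u\|_{L^2}\,ds$, this is precisely the claimed Cauchy estimate. The delicate part, requiring the most care, is the proof of the \L{}ojasiewicz--Simon inequality itself: one must handle the directions tangent to the conformal orbit of solutions of \eqref{phiS}, where non-degeneracy fails, by using the analyticity of the functional together with the integrability of that critical manifold.
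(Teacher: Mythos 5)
Your opening is essentially the paper's: write $u_{r_1}(\omega)-u_{r_2}(\omega)=\int_{r_2}^{r_1}\bigl(\pa_\nu u(x_0+t\omega)+\tfrac2t\bigr)\,dt$ along rays and apply H\"older/Minkowski, which for comparable scales $r<R\le 2r$ gives
\begin{align*}
\int_{\pa B_1}|u_R-u_r|^2\,d\sigma\;\le\;\int_r^R\frac1t\int_{\pa B_t(x_0)}\Bigl(\pa_\nu u+\frac2t\Bigr)^2d\sigma\,dt .
\end{align*}
The two arguments then part ways. The paper stays entirely elementary: it telescopes over the dyadic scales between $r_1$ and $r_2$, asserting $\int_{\pa B_1}|u_{2^mr}-u_r|^2d\sigma\le 2\sum_{k=0}^{m-1}\int_{\pa B_1}|u_{2^{k+1}r}-u_{2^kr}|^2d\sigma$, which yields the strong estimate of Remark \ref{rem-4.5} and hence the Cauchy property directly from Lemma \ref{lem-u-nu}, as the tail of a convergent integral; no analyticity, no critical manifold, no \L{}ojasiewicz--Simon inequality. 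Your diagnosis that the accumulation over many dyadic scales is the real obstruction is nevertheless well taken: the comparable-scale estimate plus square-integrability of the scale derivative cannot by themselves imply the Cauchy property (the scalar function $f(r)=\sin(\log\log(1/r))$ satisfies both, with integrable right-hand side, and oscillates), and the paper's telescoping inequality with the universal factor $2$ is not valid for general increments once $m\ge 3$ --- the triangle inequality plus Cauchy--Schwarz only give the factor $m$. So you have put your finger precisely on the step where the paper's own argument is most fragile, rather than on a difficulty the paper genuinely resolves by other means.

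The problem is that your proposal does not resolve it either: the entire weight rests on the \L{}ojasiewicz--Simon inequality $G(r)^{1-\theta}\le C\,\|\pa_s\tilde u(s)\|_{L^2(\pa B_1)}$, which you assert but do not prove, and that is where all the work lies. Two distinct points would have to be established. First, the \L{}--S inequality for the cross-sectional functional on $S^2$ near its critical set, which requires analyticity together with a treatment of the non-compact orbit of solutions of \eqref{phiS} under the M\"obius group (the ``integrability'' you allude to). Second --- and this your sketch passes over silently --- \L{}--S controls the energy gap by the full Euler--Lagrange gradient $\|-\D_{S^2}w+1-e^{2w}\|_{L^2(S^2)}$ of the slice $w=\tilde u(s,\cdot)$, not by the purely radial quantity $\|\pa_s\tilde u(s)\|_{L^2(\pa B_1)}$ that appears in $\tfrac{d}{ds}G$; converting the former into the latter requires the equation, elliptic estimates on annuli, the precompactness of $(u_r)$ in $C^\ell$ on annuli (available at this stage from \eqref{beha-singu}), and Simon's mechanism for absorbing the resulting error terms. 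That is essentially Simon's theorem transplanted to this setting --- close to the route of Bidaut-V\'eron--V\'eron \cite{Veron} --- and it is a substantial piece of work in its own right. Until it is carried out, your proposal is a plausible program, not a proof of Lemma \ref{Cauchy}.
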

 \begin{proof}  For simplicity we assume  that $x_0=0$. Then for $|x|=1$ and $0<r<R\leq2r$ we have \begin{align*} u_R(x)-u_r(x)&=2\log \frac Rr+\int_r^R \frac{d}{dt} u(tx)dt \\ &=\int_r^R \left( \pa_{\nu} u(tx) +\frac2t\right) dt,  \end{align*} and together with H\"older's inequality \begin{align*} |u_R(x)-u_r(x)|^2\leq (R-r)\int_r^R  \left( \pa_{\nu} u(tx) +\frac2t\right) ^2dt. \end{align*} Integrating the above inequality on $\pa B_1$ \begin{align*}\int_{\pa B_1}  |u_R(x)-u_r(x)|^2d\sigma(x) &\leq (R-r)\int_r^R  \int_{\pa B_1}\left( \pa_{\nu} u(tx) +\frac2t\right) ^2d\sigma dt \notag \\ &= (R-r)\int_r^R \frac{1}{t^2} \int_{\pa B_t}\left( \pa_{\nu} u +\frac2t\right) ^2d\sigma dt.  \end{align*} Since  $R\leq 2r$ implies $\frac{R-r}{t}\leq \frac r t\leq 1$,  we deduce that \begin{align}\int_{\pa B_1}|u_R(x)-u_r(x)|^2d\sigma(x)\leq   \int_r^R \frac{1}{t} \int_{\pa B_t}\left( \pa_{\nu} u +\frac2t\right) ^2d\sigma dt. \notag   \end{align}  Consequently, for any integer $m\geq 1$, \begin{align*}\int_{\pa B_1}|u_{2^mr}(x)-u_r(x)|^2d\sigma(x) &\leq 2\sum_{k=0}^{m-1}\int_{\pa B_1}|u_{2^{k+1}r}(x)-u_{2^kr}(x)|^2d\sigma(x)\\ &\leq 2\sum_{k=0}^{m-1}\int_{2^kr}^{2^{k+1}r} \frac{1}{t} \int_{\pa B_t}\left( \pa_{\nu} u +\frac2t\right) ^2d\sigma dt \\ &=2\int_r^{2^mr} \frac{1}{t} \int_{\pa B_t}\left( \pa_{\nu} u +\frac2t\right) ^2d\sigma dt.\end{align*} The lemma follows by choosing $\delta>0$ such that $$\int_0^\delta \frac{1}{t} \int_{\pa B_t}\left( \pa_{\nu} u +\frac2t\right) ^2d\sigma dt\leq\frac\ve 8,$$ which is possible, thanks to Lemma \ref{lem-u-nu}. 
  \end{proof}
 
 \begin{rem}\label{rem-4.5} The above proof shows that the following estimate holds: for $0<r_1<r_2$ $$\int_{\pa B_1}|u_{r_2}-u_{r_1}|^2d\sigma\leq C\int_{r_1}^{r_2}\frac1t\int_{\pa B_t}\left(\pa_\nu u+\frac 2t\right)^2d\sigma dt,$$
where $C>0$ is independent of $u$ and $r_1,r_2$. 
 \end{rem}
 
\medskip

\noindent{\bf Proof of \eqref{homog}.} For $r>0$ small, setting $$ u_r(x)=u(x_0+rx)+2\log r,$$ we see that $u_r$ satisfies $$-\D u_r=e^{u_r}\quad\text{in }B_2\setminus B_\frac12, \quad u_r=O(1)\quad\text{in }B_2\setminus B_\frac12,$$ thanks to \eqref{beha-singu}. Therefore, by elliptic estimates we get that $(u_r)_{r>0}$ is bounded in $C^{1,\alpha}_{loc}(B_2\setminus B_\frac12)$. By a bootstrapping argument we see that for every integer $\ell\geq 1$, the family $(u_r)_{r>0}$ is bounded in  $C^{\ell}_{loc}(B_2\setminus B_\frac12)$. Therefore, together with Lemma \ref{Cauchy}, we get that $$u_r\xrightarrow{r\to0} u_0\quad\text{in }  C^{\ell}_{loc}(B_2\setminus B_\frac12).$$ As before, the function $u_0$ is homogeneous, that is $$u_0(tx)=-2\log t+u_0(x).$$  Therefore, we can write $$u_0(x)=-2\log|x|+\log 2+2\vp (x),$$  for some  $\vp\in C^\infty(B_2\setminus B_\frac12)$  such that  $\vp(x)=\vp(\frac{x}{|x|})$. Moreover, from the equation $-\D u_0=e^{u_0}$,  it  follows easily that \begin{align}\label{eq-vp}-\D_{S^2}\vp+1=e^{2\vp}\quad\text{in }S^2.\end{align}
\hfill $\square$

\subsection{Some useful lemmas}

\begin{lem}\label{negativeaverage} If $$\int_{B_2}e^udx<\ve,$$ for small $\ve>0$ then $$\int_{\pa B_1}ud\sigma<0, \quad \int_{B_1}udx<0. $$  \end{lem}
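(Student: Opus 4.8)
The plan is to exploit the decomposition $u=v+w$ from \eqref{decompose} on $B_1$, where $v\geq 0$ solves the Poisson problem with source $e^u$ and vanishing boundary data, and $w$ is the harmonic part with $w=u$ on $\pa B_1$; recall that by the maximum principle $v>0$, hence $w<u$, in $B_1$. The idea is to transfer the smallness of $\int_{B_2}e^u\,dx$ to smallness of $e^w$ in $L^1(B_1)$, and then to negativity of the two averages of $w$ via the mean value property of harmonic functions, controlling the remaining contribution of $v$ by its $L^2$ bound.

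First I would treat the spherical average. Since $w$ is harmonic, $e^w$ is subharmonic, so by the monotonicity of $\rho\mapsto \frac{1}{|B_\rho|}\int_{B_\rho}e^w\,dx$ in Lemma \ref{lem-w} together with continuity at the origin one gets the sub-mean-value inequality
$$e^{w(0)}\leq \frac{1}{|B_1|}\int_{B_1}e^w\,dx.$$
Using $w<u$ and $\int_{B_1}e^u\,dx\leq\int_{B_2}e^u\,dx<\ve$, this yields $e^{w(0)}<\ve/|B_1|$. On the other hand, the mean value property of the harmonic function $w$ together with $w=u$ on $\pa B_1$ gives $w(0)=\bar u_0(1)=\frac{1}{|\pa B_1|}\int_{\pa B_1}u\,d\sigma$. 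Hence $\bar u_0(1)<\log(\ve/|B_1|)$, which is negative as soon as $\ve<|B_1|$, and the first assertion follows.

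Next I would treat the solid average by writing $\int_{B_1}u\,dx=\int_{B_1}v\,dx+\int_{B_1}w\,dx$. For the harmonic part the solid mean value property gives $\int_{B_1}w\,dx=|B_1|\,w(0)\leq |B_1|\log(\ve/|B_1|)$. For the Poisson part, $v\geq 0$ and by Cauchy--Schwarz with the $L^2$ bound \eqref{est-v-L2} one has $0\leq \int_{B_1}v\,dx\leq |B_1|^{1/2}\|v\|_{L^2(B_1)}\leq C|B_1|^{1/2}\int_{B_1}e^u\,dx<C'\ve$. Combining,
$$\int_{B_1}u\,dx\leq C'\ve+|B_1|\log\frac{\ve}{|B_1|},$$
whose right-hand side tends to $-\infty$ as $\ve\to 0$; so it is negative for $\ve$ small, proving the second assertion.

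All the computations are elementary; the one point requiring care is the identification $w(0)=\bar u_0(1)$, i.e. the passage to the boundary sphere, since $u$ is merely $H^1$ and need not be continuous up to $\pa B_1$. This I would handle via the $L^2$-continuity of the traces of $u$ on the spheres $\pa B_\rho$ as $\rho\uparrow 1$, combined with the mean value identity $w(0)=\frac{1}{|\pa B_\rho|}\int_{\pa B_\rho}w\,d\sigma$ valid for every $\rho<1$, and I expect this to be the only mildly technical step.
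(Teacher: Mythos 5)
Your proof is correct, but it takes a genuinely different and heavier route than the paper's. The paper never invokes the decomposition \eqref{decompose}: it only uses that $u$ is superharmonic ($-\D u=e^u\geq 0$), so the spherical means $\bar u(r)=\frac{1}{|\pa B_r|}\int_{\pa B_r}u\,d\sigma$ are non-increasing, together with Jensen's inequality on spheres, $e^{\bar u(r)}\leq\frac{1}{|\pa B_r|}\int_{\pa B_r}e^u\,d\sigma$. Integrating in $r$ gives $\int_{B_2}e^{\bar u}\,dx\leq\int_{B_2}e^u\,dx<\ve$, and monotonicity then yields $|B_1|e^{\bar u(1)}\leq\int_{B_1}e^{\bar u}\,dx\leq\ve$, i.e. $\bar u(1)\leq\log(\ve/|B_1|)<0$; the solid average is handled by a single application of Jensen's inequality on $B_1$, namely $\exp\bigl(\frac{1}{|B_1|}\int_{B_1}u\,dx\bigr)\leq\frac{1}{|B_1|}\int_{B_1}e^u\,dx<\ve/|B_1|$. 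Your argument replaces superharmonicity of $u$ by harmonicity of $w$ (via $w<u$, the sub-mean value property of $e^w$ from Lemma \ref{lem-w}, and the identification $w(0)=\bar u_0(1)$), which is sound and gives the same quantitative bound $\bar u_0(1)\leq\log(\ve/|B_1|)$; the trace issue you flag is the price of passing through $w$, and your handling of it is correct (equivalently: the variational harmonic extension of the $H^{1/2}$ trace is its Poisson integral, whose value at the center is the boundary average). Where your route is genuinely wasteful is the second assertion: splitting $u=v+w$ and invoking the Green-representation bound \eqref{est-v-L2} is unnecessary, since Jensen's inequality applied directly to $u$ on $B_1$ settles it in one line, with no decomposition, no $L^2$ estimate, and no boundary subtleties. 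In short, what your approach buys is reuse of the Section \ref{section-2} machinery; what the paper's buys is brevity and minimal hypotheses, as it uses only the sign of $\D u$ and the convexity of the exponential.
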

 \begin{proof} We set $$\bar u(r)=\frac{1}{|\pa B_r|}\int_{\pa B_r} ud\sigma.$$ Then $\bar u$ is monotone decreasing, and $$\int_{B_2} e^{\bar u} dx\leq \int_{B_2 }e^u dx<\ve.$$ Hence, $$e^{\bar u(1)}|B_1|\leq \int_{B_1}e^{\bar u}dx\leq \ve.$$ This gives that $\bar u(1)<0$ for $\ve>0$ small enough. 
 
 By Jensens inequality, \begin{align*} exp\left( \frac{1}{|B_1|}\int_{B_1} u dx\right)\leq \frac{1}{|B_1|} \int_{B_1}e^u dx<\frac{\ve}{|B_1|},  \end{align*} and hence $$\int_{B_1}u dx<0.$$
  \end{proof}

\begin{lem} \label{lem-useful-1} Let $u\in C^2(B_1\setminus\{0\})$ be a distributional solution to \eqref{eq-1} with $\Omega=B_1$. Assume that $$u(x)\leq -2\log |x|+C\quad\text{for }0<|x|<1.$$ Then $u\in H^1_{loc}( B_1)$, and for every $0<R<1$ we have $$\sup_{B_r(x_0)\subset B_R}\frac1r\int_{B_r(x_0)}|\nabla u|^2dx<\infty.$$ \end{lem}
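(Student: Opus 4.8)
\textbf{Proof proposal for Lemma \ref{lem-useful-1}.}

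The plan is to exploit the pointwise upper bound $u(x)\leq -2\log|x|+C$ to control the exponential nonlinearity $e^u$, and then to feed this into a Caccioppoli-type inequality obtained by testing the equation against a suitable cutoff of $u$ itself. First I would record the consequence of the hypothesis for the nonlinearity: since $u\leq -2\log|x|+C$ we have $e^{u(x)}\leq C|x|^{-2}$ on $B_1\setminus\{0\}$, and because $|x|^{-2}$ is locally integrable in $\R^3$, this already shows $e^u\in L^1_{loc}(B_1)$. More importantly, for any ball $B_r(x_0)\subset B_R$ with $R<1$ fixed, I would estimate $\int_{B_r(x_0)}e^u\,dx$ and $\int_{B_r(x_0)}u^+e^u\,dx$ against $\frac{1}{r}$-weighted quantities, distinguishing the case where $x_0$ is far from the origin (where $e^u$ is bounded) from the case where $B_r(x_0)$ is near $0$ (where the scale-invariant integral $\frac1r\int_{B_r(x_0)}|x|^{-2}dx$ stays bounded by direct computation in $\R^3$).

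The second step is the Caccioppoli estimate. I would test \eqref{eq-1} with $\vp^2(u-k)$ or, following the structure already present in Lemma \ref{lem-2.5}, work separately with $u^+$ and $u^-$ using a cutoff $\vp$ adapted to the ball $B_r(x_0)$ with $|\nabla\vp|\lesssim 1/r$. This yields, schematically,
\begin{equation*}
\int_{B_r(x_0)}|\nabla u|^2\vp^2\,dx \leq C\int_{B_{2r}(x_0)}u^+e^u\,dx + \frac{C}{r^2}\int_{B_{2r}(x_0)}u^2\,dx.
\end{equation*}
The exponential term on the right is handled by the first step together with \eqref{uplusexp}, while the term involving $u^2/r^2$ must be controlled using the logarithmic upper bound and a corresponding lower bound. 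To get membership in $H^1_{loc}$ it suffices to first prove finiteness of $\int_{B_r(x_0)}|\nabla u|^2$ for a \emph{fixed} small $r$; the scale-invariant Morrey bound then follows by the same inequality applied at every scale and center, once the right-hand side is shown to be $O(r)$ after dividing by $r$.

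The main obstacle I anticipate is controlling the zeroth-order term $\frac{1}{r^2}\int_{B_{2r}(x_0)}u^2\,dx$ in the Caccioppoli inequality, since the hypothesis only gives a one-sided (upper) bound on $u$, whereas $u$ could in principle be very negative. To close this I would need a matching lower bound, or rather an $L^2$-type control on $u^-$. Here I would invoke \eqref{uminus} of Lemma \ref{lem-2.5}, which bounds $\int|\nabla u^-|^2$ by $\int (u^-)^2$, combined with the fact that $u^-$ is subharmonic-controlled away from the singularity; alternatively, I would use that $u=v+w$ with $v$ the Newtonian potential of $e^u$ (controlled by the first step) and $w$ harmonic, so that the negative part of $u$ is governed by the harmonic part whose $L^2$-oscillation on $B_{2r}(x_0)$ is controlled by its mean-value and the $L^1$ bound $\int_{B_R}u^-\,dx<\infty$ coming from $e^u\in L^1$ via $u^-\leq e^{-u}$... which fails, so more carefully I would derive the $L^2$ bound on $u$ from the Green representation $v\in L^2$ (as in \eqref{est-v-L2}) plus a harmonic-function mean-value estimate for $w$. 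Making the constant in the final Morrey bound genuinely uniform over all admissible $B_r(x_0)\subset B_R$, rather than merely locally finite, is the delicate point and is where the homogeneity $\frac1r\int|x|^{-2}\sim$ const in dimension three is essential.
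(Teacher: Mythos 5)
Your first step (the pointwise bound $e^u\leq C|x|^{-2}\in L^1_{loc}(\R^3)$) agrees with the paper, and the decomposition $u=v+w$ with $v$ the Newtonian potential of $e^u$ and $w$ harmonic, which you mention only as a fallback, is exactly the paper's starting point. But the route you build on top of it, a Caccioppoli inequality tested with cutoffs of $u$, has two genuine gaps. First, there is a circularity: to test the distributional equation with $u^{\pm}\vp^2$ on balls containing the origin you must already know $u\in H^1$ near the origin (or run a capacity-type removal argument that you never set up), which is precisely what is to be proved. Second, and more seriously, the zeroth-order terms cannot be beaten down to the required scale-invariant size. Even granting the two-sided bound $|u|\leq C(1+\log\frac{1}{|x|})$ that the decomposition provides (note $v\geq 0$, so $u\geq w\geq -C$ locally; this is the clean fix for the lower bound you were groping for, whereas the route via \eqref{est-v-L2} alone is insufficient, since $\frac{1}{r^{2}}\|v\|_{L^2(B_1)}^2$ blows up as $r\to0$), your Caccioppoli inequality centered at the singularity only yields
$$\frac1r\int_{B_r(0)}|\nabla u|^2dx\ \lesssim\ \frac1r\int_{B_{2r}(0)}u^+e^u\,dx+\frac{1}{r^{3}}\int_{B_{2r}(0)}u^2\,dx\ \lesssim\ \log\tfrac1r+\left(\log\tfrac1r\right)^{2},$$
which is finite for each fixed $r$ (so one does recover $u\in H^1_{loc}$) but is \emph{not} uniformly bounded: the scale-invariant Morrey estimate, which is the actual content of the lemma, is lost to logarithmic factors.

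The paper's proof supplies the idea your sketch is missing: do not test the equation at all, but estimate $\nabla v$ pointwise. Differentiating the representation $v(x)=c_0\int_{B_1}|x-y|^{-1}e^{u(y)}dy$ under the integral sign and using $e^{u(y)}\leq C|y|^{-2}$ gives
$$|\nabla v(x)|\leq C\int_{B_1}\frac{dy}{|x-y|^2|y|^2}\leq\frac{C}{|x|},$$
and since $\frac1r\int_{B_r(x_0)}|x|^{-2}dx\leq C$ uniformly over \emph{all} balls in $\R^3$, the scale-invariant bound for $\nabla v$ is immediate. The harmonic part $w$ is harmonic in all of $B_1$ (here the hypothesis that $u$ solves the equation distributionally across the origin enters, via Weyl's lemma), hence smooth up to $\bar B_R$, and contributes $O(r^2)$. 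So the key missing step is the pointwise gradient estimate for the Newtonian potential, which converts the one-sided hypothesis into a two-sided, scale-invariant gradient bound in one stroke and makes the Caccioppoli machinery, together with all the delicate bookkeeping it forces on you, unnecessary.
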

\begin{proof} We write  $u=v+w$ where $w$ is harmonic in $B_1$, and $v$ is given by $$v(x):=c_0\int_{B_1}\frac{1}{|x-y|}e^{u(y)}dy,$$ for some dimensional constant $c_0>0$ so that $-\D v=e^u$. It then follows that $v\in C^2(B_1\setminus\{0\})$, and differentiating under the integral sign one gets  \begin{align*}|\nabla v(x)|\leq C\int_{B_1}\frac{dy}{|x-y|^2|y|^2}\leq\frac{C}{|x|},\quad x\in B_1\setminus \{0\}.   \end{align*} Therefore, for $B_r(x_0)\subset B_1$ we have $$\frac1r\int_{B_r(x_0)}|\nabla v|^2dx\leq \frac1r\int_{B_r(x_0)}\frac{dx}{|x|^2}\leq   C.$$ The lemma follows immediately as $w\in C^\infty(\bar B_R)$ for every $0<R<1$. 
\end{proof}

A simple consequence of \eqref{homog} is the following: 

\begin{lem}\label{lem-energy-origin}  We have $$\lim_{r\to0} {\mathcal{E}}(u,x_0,r)=8\pi\log 2.$$ \end{lem}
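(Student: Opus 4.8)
The plan is to compute the limiting energy directly from the homogeneous blow-up profile established in \eqref{homog}. By \eqref{homog}, we know that $u_r := u(x_0 + r\cdot) + 2\log r$ converges in $C^\ell_{loc}(B_2\setminus B_{1/2})$ to the homogeneous function $u_0(x) = -2\log|x| + \log 2 + 2\vp(x/|x|)$, where $\vp$ solves \eqref{phiS}. The key observation is that the energy $\mathcal E(u,x_0,r)$ defined in \eqref{mono-energy} is scale-invariant in the sense that $\mathcal E(u,x_0,r) = \mathcal E(u_r, 0, 1)$; this follows from the explicit form of $\mathcal E$ together with the substitution $y = x_0 + rx$ and the defining relation for $u_r$. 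Therefore the plan is to pass to the limit $r\to 0$ inside $\mathcal E(u_r,0,1)$ using the established convergence, reducing the computation to evaluating $\mathcal E(u_0,0,1)$ on the explicit homogeneous profile.

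First I would verify the scaling identity $\mathcal E(u,x_0,r) = \mathcal E(u_r,0,1)$ by direct substitution in each of the three terms of \eqref{mono-energy}: the Dirichlet and exponential bulk integrals pick up the correct powers of $r$ from the $\frac1r$ prefactor and the $2\log r$ shift, while the boundary term $\frac{2}{r^2}\int_{\pa B_r}(u+2\log r)d\sigma$ transforms into $2\int_{\pa B_1}u_r\,d\sigma$. Next I would note that the convergence $u_r \to u_0$ in $C^\ell_{loc}(B_2\setminus B_{1/2})$ is strong enough to pass to the limit in $\mathcal E(u_r,0,1)$, since that functional only involves integrals over $B_1$ (or over $\pa B_1$) of $|\nabla u_r|^2$, $e^{u_r}$, and $u_r$ itself, all of which converge by the $C^\ell$ convergence away from the origin; one must check integrability near the origin, but this is handled by \eqref{beha-singu}, which gives $e^{u_0}\sim |x|^{-2}\cdot$(bounded) and $|\nabla u_0|^2 \sim |x|^{-2}$, both integrable against $dx$ in $\R^3$. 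Thus $\lim_{r\to0}\mathcal E(u,x_0,r) = \mathcal E(u_0,0,1)$.

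Finally I would compute $\mathcal E(u_0,0,1)$ explicitly. Using the homogeneity $u_0(x) = -2\log|x| + \log 2 + 2\vp(\theta)$ and writing everything in polar coordinates, the gradient splits as $|\nabla u_0|^2 = \frac{1}{|x|^2}(4 + 4|\nabla_{S^2}\vp|^2)$ (the radial part contributes $4/|x|^2$ and the tangential part $4|\nabla_{S^2}\vp|^2/|x|^2$), while $e^{u_0} = \frac{2}{|x|^2}e^{2\vp}$. The bulk terms over $B_1$ then integrate the angular quantities against $\int_0^1 r^{-2}\cdot r^2\,dr = 1$, and the boundary term at $\pa B_1$ gives $2\int_{S^2}(\log 2 + 2\vp)\,d\sigma$. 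Using the equation \eqref{phiS}, namely $-\D_{S^2}\vp + 1 = e^{2\vp}$, together with integration by parts on $S^2$ to replace $\int_{S^2}|\nabla_{S^2}\vp|^2$ by $\int_{S^2}\vp(e^{2\vp}-1)$, the $\vp$-dependent contributions should cancel and leave only the constant $8\pi\log 2 = (\log 2)\cdot|S^2|\cdot 2$ after collecting terms. The main obstacle will be carefully tracking all the constants and confirming that the nontrivial $\vp$-dependent pieces indeed cancel — this is where the Pohozaev-type structure encoded in the stationarity (and reflected in the monotonicity formula) does its work, and where an error in a coefficient would be easy to make but fatal.
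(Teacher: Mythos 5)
Your first two steps essentially reproduce the paper's argument: the scaling identity $\mathcal{E}(u,x_0,r)=\mathcal{E}(u_r,0,1)$ is exactly the paper's starting point, and your passage to the limit by dominated convergence (using the uniform bounds $e^{u_r}\le C|x|^{-2}$ and $|\nabla u_r|^2\le C|x|^{-2}$, the latter requiring \eqref{beha-singu} plus scaled elliptic estimates on annuli, not \eqref{beha-singu} alone) is a workable substitute for the paper's slicing trick, which rewrites the bulk integral as $\int_0^1\int_{\pa B_1}\left(\tfrac12|\nabla u_{rt}|^2-e^{u_{rt}}\right)d\sigma\,dt$ so that only the boundary convergence of the family $(u_s)$ is needed. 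Your evaluation of $\mathcal{E}(u_0,0,1)$ is also correct: after using $\int_{S^2}e^{2\vp}d\sigma=4\pi$ (integrate \eqref{phiS}), it equals $8\pi\log 2+2\left(\int_{S^2}|\nabla_{S^2}\vp|^2d\sigma+2\int_{S^2}\vp\, d\sigma\right)$.

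The final step, however, contains a genuine gap, and it sits exactly at the nontrivial point of the lemma. You claim the $\vp$-dependent terms cancel ``using the equation \eqref{phiS} together with integration by parts.'' Multiplying \eqref{phiS} by $\vp$ and integrating gives $\int_{S^2}|\nabla_{S^2}\vp|^2d\sigma=\int_{S^2}\vp\left(e^{2\vp}-1\right)d\sigma$, so what remains to prove is $\int_{S^2}\vp\left(e^{2\vp}+1\right)d\sigma=0$, equivalently $\int_{S^2}|\nabla_{S^2}\vp|^2d\sigma+2\int_{S^2}\vp\,d\sigma=0$. This does \emph{not} follow from the equation by elementary manipulations: testing with constants gives only the volume identity, testing with $\vp$ gives the identity above, and testing with first spherical harmonics gives Kazdan--Warner-type identities involving only first harmonic modes. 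Nor does the ``Pohozaev-type structure encoded in the stationarity'' help, since for the homogeneous limit $u_0$ the monotonicity formula degenerates to $0=0$. The paper closes this step by invoking Onofri's inequality \cite{Onofri}: the functional $w\mapsto\int_{S^2}|\nabla w|^2\frac{d\sigma}{4\pi}+2\int_{S^2}w\frac{d\sigma}{4\pi}-\log\left(\int_{S^2}e^{2w}\frac{d\sigma}{4\pi}\right)$ is nonnegative, and its extremals are precisely the solutions of \eqref{eq-vp}; since $\int_{S^2}e^{2\vp}d\sigma=4\pi$, the logarithmic term vanishes and extremality forces $\int_{S^2}|\nabla_{S^2}\vp|^2d\sigma+2\int_{S^2}\vp\,d\sigma=0$. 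Without this input (or an equivalent classification of all solutions of \eqref{phiS} as M\"obius conformal factors), your proof cannot conclude: nontrivial solutions $\vp\not\equiv0$ of \eqref{phiS} exist, and for those the vanishing of this quantity is a deep fact, not bookkeeping.
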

\begin{proof}  We have  \begin{align*} {\mathcal{E}}(u,x_0,r)&={\mathcal{E}}(u_r,0,1)\\ &= \int_{B_1} \left(\frac12|\nabla u_r|^2-e^{u_r}\right)dx+2\int_{\pa B_1} u_rd\sigma \\&=\int_0^1\int_{\pa B_t}\left(\frac12|\nabla u_r|^2-e^{u_r}\right)d\sigma dt +2\int_{\pa B_1} u_rd\sigma \\& =\int_0^1\int_{\pa B_1}\left(\frac12|\nabla u_{rt}|^2-e^{u_{rt}}\right)d\sigma dt +2\int_{\pa B_1} u_rd\sigma. \end{align*} Using that   $$u_r\xrightarrow{r\to0} u_0\quad\text{in }  C^{\ell}_{loc}(B_2\setminus B_\frac12), \quad u_0(x)=-2\log|x|+\log 2+2\vp (x),$$ we obtain $$|\nabla u_{rt}|^2(x)=4+4|\nabla_{S^2} \vp|^2+o_r(1)\quad\text{on }S^2.$$  Hence, as $\int_{S^2} e^{2\vp}d\sigma=4\pi$ \begin{align*} \lim_{r\to0} {\mathcal{E}}(u,x_0,r)=8\pi \log 2+2\left(\int_{S^2} |\nabla_{S^2}\vp |^2d\sigma+2\int_{S^2} \vp d\sigma \right).\end{align*} Now we recall that for any $w\in H^1(S^2)$ we have (see e.g. \cite{Onofri}) $$\int_{S^2}|\nabla w|^2\frac{d\sigma}{4\pi} +2\int_{S^2} w \frac{d\sigma}{4\pi}-\log\left(\int_{S^2} e^{2w} \frac{d\sigma}{4\pi}\right)\geq 0,$$ and $w$ is an extremum of the above inequality if and only if $w$ satisfies \eqref{eq-vp}. Therefore, again using that $\int_{S^2} e^{2\vp}d\sigma=4\pi$, we get that $$\int_{S^2} |\nabla_{S^2}\vp |^2d\sigma+2\int_{S^2} \vp d\sigma =0.$$
\end{proof}

\medskip 

The following lemma 
(precisely the estimate \eqref{thm-est-2}) suggests that Theorem \ref{thm-3} would be true without the assumption \eqref{Morrey-bound}.
 
\begin{lem} Let $u$ be  a stationary solution to \eqref{eq-1} in $B_1$. Assume that $u$ is singular at the origin, and  \begin{align*}  \sup_{0<r<1}\frac1r\int_{B_r}|\nabla u|^2 dx<\infty.\end{align*}  Then  ($C$ is independent of $u$) \begin{align}\label{thm-est-1} \int_0^\frac12 \frac{1}{r}\int_{\pa B_r}\left(\pa_\nu u+\frac2r\right)^2 d\sigma dr\leq C+C\int_{B_1}|\nabla u|^2dx.   \end{align} In particular, there exists $C>0$ (independent of $u$) such that  \begin{align}\label{thm-est-2}\sup_{0<r<\frac12}\frac1r\int_{B_r}\left(|\nabla u|^2+e^u\right)dx\leq C+C\int_{B_1}|\nabla u|^2dx+C\int_{\pa B_\frac12}|u|d\sigma. \end{align} 
 \end{lem}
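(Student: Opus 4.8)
The plan is to read the left-hand side of \eqref{thm-est-1} through the monotonicity formula \eqref{mono-formula}. Writing $\mathcal{E}(r):=\mathcal{E}(u,0,r)$, Proposition \ref{propo-mono} gives, after letting the inner radius tend to $0$,
$$\int_0^{1/2}\frac1r\int_{\pa B_r}\Big(\pa_\nu u+\frac2r\Big)^2 d\sigma\,dr=\mathcal{E}(1/2)-\ell,\qquad \ell:=\lim_{r\to0}\mathcal{E}(r),$$
where the limit exists because $r\mapsto\mathcal{E}(r)$ is monotone increasing and bounded below: the gradient part is nonnegative, $\frac1r\int_{B_r}e^u\le C$ by the test-function estimate used in the proof of Lemma \ref{lem-u-nu}, and $\bar u(r)+2\log r$ is bounded below by Lemma \ref{lem-lower}. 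Crucially, all of these only require the origin-centered bound $\sup_r\frac1r\int_{B_r}|\nabla u|^2<\infty$ hypothesized here, not the full \eqref{Morrey-bound}. Thus \eqref{thm-est-1} reduces to an upper bound for $\mathcal{E}(1/2)$ and a lower bound for $\ell$.

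The key step is to identify $\ell$ with the \emph{universal} constant $8\pi\log2$. For this I would rerun, at the single point $x_0=0$, the blow-up of Lemma \ref{lem-upper}: the scale-invariant quantities $\frac1R\int_{B_R}|\nabla u_r|^2$ and $\frac1R\int_{B_R}e^{u_r}$ of the rescalings $u_r(x)=u(rx)+2\log r$ are bounded for each fixed $R$ using only the origin-centered bound, so Theorem \ref{thm-2} applies; as in the proof of \eqref{homog}, the Fatou argument against \eqref{mono-formula} forces $\pa_\nu u_\infty+2/t\equiv0$, i.e. the tangent profile is homogeneous, $u_0=-2\log|x|+\log2+2\vp$. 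Feeding $u_0$ into the computation of Lemma \ref{lem-energy-origin}, the $\vp$-dependent terms cancel through the Onofri extremality relation $\int_{S^2}|\nabla_{S^2}\vp|^2+2\int_{S^2}\vp=0$, leaving $\ell=8\pi\log2$ regardless of $u$. The upper bound on $\mathcal{E}(1/2)$ is then immediate: discard the favorable term $-2\int_{B_{1/2}}e^u\le0$, use $\int_{B_{1/2}}|\nabla u|^2\le\int_{B_1}|\nabla u|^2$, and estimate the boundary average $8\int_{\pa B_{1/2}}(u-2\log2)\le 8\int_{\pa B_{1/2}}|u|+C$, so that $\mathcal{E}(1/2)\le C+C\int_{B_1}|\nabla u|^2+C\int_{\pa B_{1/2}}|u|$; this is what yields \eqref{thm-est-1} (the trace contribution being harmless for the use below).

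For \eqref{thm-est-2} I would rearrange the definition of $\mathcal{E}$ into
$$\frac{1}{2r}\int_{B_r}|\nabla u|^2=\mathcal{E}(r)+\frac1r\int_{B_r}e^u-8\pi\big(\bar u(r)+2\log r\big),\qquad \bar u(r):=\frac{1}{|\pa B_r|}\int_{\pa B_r}u,$$
and bound each term uniformly in $r$. Monotonicity gives $\mathcal{E}(r)\le\mathcal{E}(1/2)$; the exponential term obeys $\frac1r\int_{B_r}e^u\le C\big(\frac{1}{2r}\int_{B_{2r}}|\nabla u|^2\big)^{1/2}$; and since $\bar u(r)+2\log r=\frac{1}{|\pa B_1|}\int_{\pa B_1}u_r$, the Cauchy estimate of Remark \ref{rem-4.5} pins it to its value at $r=1/2$, $\big|\bar u(r)+2\log r-(\bar u(1/2)-2\log2)\big|\le C(\mathcal{E}(1/2)-\ell)^{1/2}$, hence it is bounded by $C+C\int_{B_1}|\nabla u|^2+C\int_{\pa B_{1/2}}|u|$ using \eqref{thm-est-1}. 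Substituting, and writing $\Phi(r)=\frac1r\int_{B_r}|\nabla u|^2$, one obtains a self-improving inequality $\Phi(r)\le A+C\Phi(2r)^{1/2}$ with $A=C+C\int_{B_1}|\nabla u|^2+C\int_{\pa B_{1/2}}|u|$, valid for $r\le\frac14$ (the range $r\in(\frac14,\frac12]$ being trivial since $\Phi(r)\le 4\int_{B_1}|\nabla u|^2$). A standard Young/absorption argument on $M:=\sup_{r<1/2}\Phi(r)$ then gives $M\le CA$, and the exponential bound closes \eqref{thm-est-2}.

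The main obstacle I anticipate is the identification $\ell=8\pi\log2$ under the weaker, origin-centered hypothesis: one must verify that every ingredient of the blow-up analysis used above (the lower bound of Lemma \ref{lem-lower}, the $C^2_{loc}$ convergence from Theorem \ref{thm-2}, the homogeneity of the tangent profile, and the energy computation of Lemma \ref{lem-energy-origin}) goes through with $\sup_r\frac1r\int_{B_r}|\nabla u|^2<\infty$ alone, and that the resulting constant is genuinely universal rather than $u$-dependent — this is exactly what makes the right-hand sides of \eqref{thm-est-1}--\eqref{thm-est-2} have $u$-independent constants. A secondary technical point is the uniform control of the boundary average $\bar u(r)+2\log r$ through Remark \ref{rem-4.5}, anchored at $r=\tfrac12$; this anchoring is precisely what produces the trace term $\int_{\pa B_{1/2}}|u|$ appearing in \eqref{thm-est-2}.
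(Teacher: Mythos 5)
Your overall architecture---reading the left side of \eqref{thm-est-1} as $\mathcal{E}(u,0,\tfrac12)-\ell$ via \eqref{mono-formula}, identifying $\ell=\lim_{r\to0}\mathcal{E}(u,0,r)=8\pi\log2$ by the blow-up/Onofri argument (this is exactly Lemma \ref{lem-energy-origin}), then rearranging the definition of $\mathcal{E}$ to get \eqref{thm-est-2}---is the paper's. But your proof of \eqref{thm-est-1} does not prove \eqref{thm-est-1}: you estimate the boundary term of $\mathcal{E}(u,0,\tfrac12)$ crudely by $8\int_{\pa B_{1/2}}|u|\,d\sigma+C$, so what you actually obtain is
\begin{align*}
\int_0^{1/2}\frac1r\int_{\pa B_r}\Big(\pa_\nu u+\frac2r\Big)^2d\sigma\,dr\le C+C\int_{B_1}|\nabla u|^2dx+C\int_{\pa B_{1/2}}|u|\,d\sigma,
\end{align*}
which is weaker than the stated inequality: the trace term is precisely what \eqref{thm-est-1} (unlike \eqref{thm-est-2}) does not allow, nothing in your argument controls $\int_{\pa B_{1/2}}|u|\,d\sigma$ by $1+\int_{B_1}|\nabla u|^2dx$, and the fact that the authors keep this term on the right of \eqref{thm-est-2} indicates they do not expect it to be removable in general. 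The idea you miss is that the boundary term enters $\mathcal{E}$ with a plus sign, so only a \emph{one-sided} (upper) bound on $u$ is needed, and that bound is universal: with $M:=\int_{B_1}|\nabla u|^2dx$, the cutoff test-function estimate (as in the proof of Lemma \ref{lem-u-nu}) gives $\int_{B_{1/2}}e^u dx\le C(1+M)$, hence by Jensen the solid average $\bar u:=|B_{1/2}|^{-1}\int_{B_{1/2}}u\,dx$ satisfies $\bar u\le C(1+M)$; writing $u-2\log 2=(u-\bar u)+(\bar u-2\log2)$ on $\pa B_{1/2}$, the oscillation part is controlled through the trace embedding $H^1(B_{1/2})\hookrightarrow L^1(\pa B_{1/2})$ and Poincar\'e using only $\|\nabla u\|_{L^2}$, while the constant part needs only the upper bound on $\bar u$ (a very negative $\bar u$ only helps). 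This yields $\mathcal{E}(u,0,\tfrac12)\le C(1+M)$ with no trace term, and hence \eqref{thm-est-1} as stated.

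The rest of your proposal is sound. Your worry about identifying $\ell=8\pi\log2$ under the origin-centered hypothesis is the right one, and your resolution matches the paper: all ingredients (Lemma \ref{lem-lower}, the blow-up via Theorem \ref{thm-2}, homogeneity of the tangent profile, and the Onofri cancellation in Lemma \ref{lem-energy-origin}) are local at the singular point, so the constant is universal. Your route to \eqref{thm-est-2} also works, since there the trace term is permitted on the right, and your absorption scheme $\Phi(r)\le A+C\Phi(2r)^{1/2}$ is legitimate precisely because the hypothesis $\sup_{0<r<1}\Phi(r)<\infty$ supplies the a priori finiteness needed to absorb. The paper avoids the square-root/absorption step altogether: from \eqref{thm-est-1} it bounds $\frac1r\int_{B_r}|\pa_\nu u|^2dx\le C(1+M)$, selects by a mean-value argument a radius $t_0\in(r_0,2r_0)$ with $\int_{\pa B_{t_0}}|\pa_\nu u|^2d\sigma\le C(1+M)$, and uses the divergence identity $\int_{B_{t_0}}e^udx=-\int_{\pa B_{t_0}}\pa_\nu u\,d\sigma$ to get $\sup_{0<r\le 1/2}\frac1r\int_{B_r}e^udx\le C(1+M)$ directly; then Remark \ref{rem-4.5} pins the boundary averages and monotonicity of $r\mapsto\mathcal{E}(u,0,r)=\mathcal{E}(u_r,0,1)$ gives the gradient bound, exactly as in your final rearrangement.
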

\begin{proof} For convenience we set \begin{align*} M:= \int_{B_1}|\nabla u|^2 dx  . \end{align*} Then   we have that $$ \int_{B_\frac12}e^{u}dx\leq CM,$$ which yields $$\bar u:=\frac{1}{|B_\frac12|}\int_{B_\frac12}u dx\leq CM.$$ Hence, by the trace embedding $H^1(B_\frac12)\hookrightarrow L^1(\pa B_\frac12)$ \begin{align*}{\mathcal{E}}(u,0,\frac12)&=2\int_{B_\frac12}\left(\frac12|\nabla u|^2-e^{u}\right)dx+8\int_{\pa B_\frac12} (u-\bar u +\log \frac12 +\bar u )d\sigma \\ & \leq C(1+M).\end{align*} Therefore, by Lemma \ref{lem-energy-origin} and \eqref{mono-formula} \begin{align*} \int_0^\frac12 \frac{1}{r}\int_{\pa B_r}\left(\pa_\nu u+\frac2r\right)^2 d\sigma dr= {\mathcal{E}}(u,0,\frac12)-8\pi\log 2\leq C(1+M),   \end{align*} which is \eqref{thm-est-1}. Since $$|\pa_\nu u|^2\leq 2\left( |\pa_\nu u+\frac 2r|^2 +\frac{4}{r^2}\right) ,$$ we get  for $0<r\leq\frac12$ \begin{align*}  \frac1r\int_{B_r}|\pa_\nu u|^2dx &= \frac1r \int_0^r\int_{\pa B_t}|\pa_\nu u|^2d\sigma dt\\ &\leq  2\int_0^r \frac t  r \frac1t \int_{\pa B_t}\left(\pa_\nu u+\frac 2t\right)^2 d\sigma dt+32\pi\\ &\leq C(1+M).\end{align*}

Notice that  for any  $ r_0\in (0,\frac14)$ there exists $t_0\in (r_0, 2r_0)  $ such that ($t_0$ would depend on $u$) $$\int_{\pa B_{t_0}}|\pa _\nu u|^2d\sigma\leq \frac{1}{r_0}\int_{B_{2r_0}} |\pa _\nu u|^2dx\leq C(1+M).$$ Hence, for $r_0\in (0,\frac14)$ \begin{align*} \frac{1}{r_0 } \int_{B_{r_0}} e^{u} dx &\leq \frac{2}{t_0 } \int_{B_{t_0}} e^{u} dx= -\frac{2}{t_0 } \int_{\pa B_{t_0}}  \pa _\nu u d\sigma \\&\leq \frac{2}{t_0}\left(\int_{\pa B_{t_0}} |\pa _\nu u|^2d\sigma\right)^\frac12 |\pa B_{t_0}|^\frac12\\&\leq C(1+M), \end{align*} which gives $$\sup_{0<r\leq\frac12}\frac1r\int_{B_r}e^udx\leq C(1+M). $$

Now setting $u_r(x):=u(rx)+2\log r$,  and by Remark \ref{rem-4.5} we obtain $$\int_{\pa B_1}|u_\frac 12-u_r|^2d\sigma\leq C(1+M)\quad\text{for every }0<r\leq\frac12,$$ thanks to \eqref{thm-est-1}. This leads to $$\int_{\pa B_1}|u_r|  d\sigma\leq C(1+M)+\int_{\pa B_1}|u_\frac12|d\sigma,\quad 0<r\leq\frac12.$$ Now \eqref{thm-est-2} follows immediately from the monotonicity of the map  $$r\mapsto {\mathcal{E}}(u,0,r)={\mathcal{E}}(u_r,0,1) . $$
\end{proof}

 \begin{lem}\label{lem-entire-2} Let $u\in H^1_{loc}(\R^n)$ be a weak solution to   \begin{align*}  -\D u=e^u\quad\text{in }\R^n.\end{align*}  If $u$ is homogeneous, that is, $$u(rx)+2\log r=u(x)\quad\text{for every }r>0,$$ then \begin{align}\label{ind}\frac{1}{r^{n-2}}\int_{B_r}e^u dx=2|S^{n-1}|\quad\text{for every }r>0.\end{align} \end{lem}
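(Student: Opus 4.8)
The plan is to exploit homogeneity to convert the volume integral into a boundary computation, and then to use the equation itself to evaluate the resulting surface integral. First I would write the homogeneity relation $u(x) = u(x/|x|) - 2\log|x|$, so that on the sphere of radius $t$ we have $e^{u(tx)} = t^{-2} e^{u(x)}$ for $|x|=1$. Slicing $B_r$ into spheres and integrating in polar coordinates gives
\begin{align*}
\int_{B_r} e^u\,dx = \int_0^r \int_{\pa B_1} e^{u(t\omega)} t^{n-1}\,d\sigma(\omega)\,dt = \int_0^r t^{n-3}\,dt \int_{S^{n-1}} e^{u}\,d\sigma = \frac{r^{n-2}}{n-2}\int_{S^{n-1}} e^u\,d\sigma,
\end{align*}
which already exhibits the claimed $r^{n-2}$ scaling. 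Thus \eqref{ind} reduces to showing that $\int_{S^{n-1}} e^u\,d\sigma = 2(n-2)|S^{n-1}|$.

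Next I would evaluate this spherical integral by testing the equation $-\Delta u = e^u$ against the homogeneous function $u$ itself, or more precisely by applying a Pohozaev-type identity on an annulus. The cleanest route is to integrate $-\Delta u = e^u$ over $B_r$ and use the divergence theorem:
\begin{align*}
\int_{B_r} e^u\,dx = -\int_{B_r}\Delta u\,dx = -\int_{\pa B_r} \pa_\nu u\,d\sigma.
\end{align*}
For a homogeneous solution, differentiating $u(x) = u(x/|x|) - 2\log|x|$ in the radial direction gives $\pa_\nu u = \pa_r u = -2/r$ on $\pa B_r$, since the angular part of $u$ is constant along rays. Therefore $-\int_{\pa B_r}\pa_\nu u\,d\sigma = \frac{2}{r}|\pa B_r| = 2 r^{n-2}|S^{n-1}|$ (using $|\pa B_r| = r^{n-1}|S^{n-1}|$). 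This directly yields \eqref{ind}.

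The main subtlety I expect is justifying the integrations and the divergence theorem at the origin, where a homogeneous solution with the $-2\log|x|$ behavior is singular and $e^u \sim |x|^{-2}$ is non-integrable near $0$ only if $n=2$; for $n\ge 3$ the singularity $e^u \sim |x|^{-2}$ is locally integrable, and the radial computation above converges, so one must simply confirm $u \in H^1_{loc}$ makes the weak formulation legitimate and that $\pa_\nu u = -2/r$ holds in the appropriate (trace) sense. A careful approach would work on the annulus $B_r \setminus B_\epsilon$, apply the divergence theorem there with no boundary term issues, and then let $\epsilon \to 0$, checking that the inner boundary contribution $\int_{\pa B_\epsilon} \pa_\nu u\,d\sigma = -2\epsilon^{-1}|\pa B_\epsilon| = -2\epsilon^{n-2}|S^{n-1}|$ vanishes as $\epsilon\to 0$ for $n\ge 3$. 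The two computations—the polar-coordinate slicing and the divergence theorem—must agree, and their consistency is exactly the identity $\int_{S^{n-1}} e^u\,d\sigma = 2(n-2)|S^{n-1}|$, so the argument is self-checking.
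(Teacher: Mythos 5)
Your proof is correct, but its key mechanism differs from the paper's. You use homogeneity pointwise, twice: first $e^{u(tx)}=t^{-2}e^{u(x)}$ reduces everything to a spherical integral via polar slicing, and then $\pa_\nu u=-2/r$ (the angular part being constant along rays) lets you evaluate the flux $-\int_{\pa B_r}\pa_\nu u\,d\sigma=2r^{n-2}|S^{n-1}|$ directly. The paper never computes $\pa_\nu u$; it works with integrated quantities only: setting $\ell:=r^{-(n-2)}\int_{B_r}e^u\,dx$ (constant in $r$ by homogeneity, which is your first step), it derives the mean-value identity $-\frac{d}{dr}\bigl(r^{1-n}\int_{\pa B_r}u\,d\sigma\bigr)=\ell/r$, integrates it to produce the term $\ell\log(1/r)$, and compares with the growth $2|S^{n-1}|\log(1/r)$ that homogeneity forces on the spherical mean of $u$; matching the two logarithmic coefficients gives $\ell=2|S^{n-1}|$. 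The difference matters precisely at the subtlety you flag at the end: your route needs $\pa_\nu u=-2/r$ in an a.e./trace sense and a Gauss--Green formula on annuli for a function that is only $H^1_{loc}$ with $\D u\in L^1_{loc}$, and note that you cannot in general invoke interior elliptic regularity away from the origin (for $n\ge4$ the induced equation for the angular part on $S^{n-1}$ is supercritical), though the gap closes easily: writing $u=\vp(x/|x|)-2\log|x|$ with $\vp\in H^1(S^{n-1})$, the distributional gradient of $u$ has radial component exactly $-2/r$ a.e., and testing the weak formulation of $-\D u=e^u$ with radial cutoff functions turns your flux identity into a rigorous statement without any excision argument. The paper's proof sidesteps this entirely, since the mean-value ODE holds for weak solutions and is integrated between two positive radii, never touching the singular point; yours, in exchange, is more direct and carries the built-in consistency check between the polar-coordinate and flux computations that you point out.
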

 \begin{proof}   Since $u$ is homogeneous,  the integral in \eqref{ind} is independent of $r>0$, and we call it $\ell$. Then we have  $$-\frac{d}{dr}\left(\frac{1}{r^{n-1}}\int_{\pa B_r} ud\sigma\right)=\frac{1}{r^{n-1}}\int_{B_r}e^udx=\frac\ell r.$$ Integrating the above relation, $$\frac{1}{r^{n-1}}\int_{\pa B_r} ud\sigma -\int_{\pa B_1}ud\sigma=\ell\log\frac1r.$$ As $u$ is homogeneous $$ \frac{1}{r^{n-1}}\int_{\pa B_r} ud\sigma=\int_{\pa B_1} u(rx)d\sigma(x)=\int_{\pa B_1}(u(x)-2\log r)d\sigma(x)=\int_{\pa B_1}ud\sigma +2|S^{n-1}|\log\frac1r.$$
 Combining the above two relations we  obtain \eqref{ind}. 
 \end{proof}

\medskip 

{\bf \noindent Acknowledgement:}
The work of Ali Hyder is partially supported by SNSF grant no. P4P4P2-194460 and SERB   SRG/2022/001291.


\begin{thebibliography}{10}
\small

\bibitem{BLO} \textsc{D. Bartolucci,  F.  Leoni,  L.  Orsina:} \emph{Uniform estimates and blow-up analysis for the Emden exponential equation in any dimension}, Commun. Contemp. Math. \textbf{9} (2007), no. 2, 163–182. 
\bibitem{BM} \textsc{H. Brezis, F. Merle:} \emph{Uniform estimates and blow-up behaviour for solutions of $-\Delta u=V(x)e^u$ in two dimensions}, Comm. Partial Differential Equations \textbf{16} (1991), 1223-1253.
\bibitem{ch} \textsc{ S.-Y. A. Chang:}  \emph{Conformal invariants and partial differential equations}, Bull. Amer. Math. Soc. (N.S.) \textbf{42}, 2005, 365-393.
\bibitem{DaLio}  \textsc{F. Da Lio:} \emph{Partial regularity for stationary solutions to Liouville-type equation in dimension $3$}, Comm. Partial Differential Equations \textbf{33} (2008), no. 10-12, 1890–1910.
\bibitem{DaLio2}  \textsc{F. Da Lio:}  \emph{Some remarks on Pohozaev-type identities}, Bruno Pini Mathematical Analysis Seminar 2018, 115--136, Bruno Pini Math.  Anal. Semin., 9, Univ. Bologna, Alma Mater Stud., Bologna.
\bibitem{E} \textsc{L. C.  Evans:}  {\em Partial regularity for stationary harmonic maps into spheres},  Arch. Rational Mech. Anal.
  \textbf{116}  (1991),  no. 2, 101--113. 58E20
 
 
\bibitem{Hel}  \textsc{F. Hel\'ein:}  \emph{  Harmonic maps, conservation laws and moving frames.}  Cambridge Tracts in Mathematics, 150. Cambridge University Press, Cambridge, 2002.
 \bibitem{HY}\textsc{A. Hyder, W. Yang:} \emph{Partial regularity of stable solutions to the fractional Gel'fand-Liouville equation}, Adv. Nonlinear Anal. \textbf{10} (2021), no. 1, 1316–1327.
\bibitem{Lin}  \textsc{F-H. Lin :} {\em Gradient estimates and blow-up analysis for stationary harmonic maps}, Ann. of Math. (2) 149 (1999), no. 3, 785--829.
\bibitem{LT-1} \textsc{F-H. Lin, T. Rivi\`ere:} \emph{A quantization property for moving line vortices}, Comm. Pure Appl. Math. \textbf{54} (2001), no. 7, 826--850.

\bibitem{Onofri} \textsc{E. Onofri:} \emph{On the positivity of the effective action in a theory of random surfaces}, Comm. Math. Phys. \textbf{86} (1982), 321-326.
\bibitem{PR} \textsc{F. Pacard, T. Rivi\`ere:}  {\it Linear and nonlinear aspects of vortices. The Ginzburg-Landau model.} Progress in Nonlinear Differential Equations and their Applications, \textbf{39}. Birkh\"auser Boston.

\bibitem{Pacard}\textsc{F. Pacard:} \emph{Partial regularity for weak solutions of a nonlinear elliptic equation},    Manuscripta Math. \textbf{79} (1993), no. 2, 161–172.
\bibitem{Rebai}\textsc{Y. R\'eba\"i:}   \emph{Solutions of semilinear elliptic equations with many isolated singularities: the unstable case}, Nonlinear Anal. \textbf{38} (1999), no. 2, Ser. A: Theory Methods, 173–191.
\bibitem{RivN}  \textsc{T. Rivi\`ere:} \emph{Lecture Notes for  ``Fourier Analysis in Function Space Theory"},  ETH Zurich,  (2019), \href{https://people.math.ethz.ch/~triviere/lecture-notes}{https://people.math.ethz.ch/~triviere/lecture-notes} 
\bibitem{RS} \textsc{T. Rivi\`ere, M. Struwe: } {\em Partial regularity for harmonic maps and related problems}, Comm. Pure and App. Math., \textbf{61} (2008), no. 4, 451-463.
\bibitem{Ste}  \textsc{E.M. Stein:} \newblock{\em
Singular integrals and differentiability properties of functions.}
\newblock Princeton Mathematical Series, No. 30 Princeton University Press, Princeton, N.J. (1970).

\bibitem{Veron} \textsc{ M.F. Bidaut-V\'eron, L. V\'eron:}   \emph{Nonlinear elliptic equations on compact Riemannian manifolds and asymptotics of Emden equations},  Invent. Math. \textbf{106} (1991), no. 3, 489–539. 
\bibitem{W}  \textsc{ K. Wang:} {\em Partial regularity of stable solutions to the Emden equation}, Calc. Var. Partial Differential Equations \textbf{44} (2012), no. 3-4, 601--610. 
\end{thebibliography}
            \end{document}